\newtheorem{theorem}{Theorem}[section]
\newtheorem{remark}[theorem]{Remark}
\newtheorem{lemma}[theorem]{Lemma}
\newtheorem{proposition}[theorem]{Proposition}
\title[]{Reflected BSDEs driven by $G$-Brownian motion with non-Lipschitz coefficients}
\author[Li]{Hanwu Li}
\address{H.\ Li: Research Center for Mathematics and Interdisciplinary Sciences, Shandong University, Qingdao 266237, Shandong, China.}
\address{H.\ Li: Frontiers Science Center for Nonlinear Expectations (Ministry of Education), Shandong University, Qingdao 266237, Shandong, China.}
\email{\href{mailto:lihanwu@sdu.edu.cn}{lihanwu@sdu.edu.cn}}
\date{\today}
\numberwithin{equation}{section}
\begin{document}

\begin{abstract}
In this paper, we consider the reflected backward stochastic differential equations driven by $G$-Brownian motion (reflected $G$-BSDEs) whose coefficients satisfy the $\beta$-order Mao's condition. The uniqueness is obtained by some a priori estimates and the existence can be proved by two different methods. The first one is Picard iteration and the second one is approximation via penalization. The latter construction is useful to get the comparison theorem.
\end{abstract}

\maketitle

\smallskip

{\textbf{Keywords}}:  $G$-BSDE, reflected $G$-BSDE, $\beta$-order Mao's condition, comparison theorem
\smallskip

{\textbf{MSC2010 subject classification}}: 60H10



\section{Introduction}
The nonlinear expectation theory, especially the $G$-expectation introduced by Peng, has attracted a great deal of attention recently. The motivation for studying $G$-expectation is to investigate financial problems in the market with volatility uncertainty and to provide a probabilistic interpretation for fully nonlinear partial differential equations. Peng systematically established a new kind of stationary and independent increment continuous process, called the $G$-Brownian motion, as well as the associated It\^{o}'s calculus. It is worth pointing out that the quadratic variation process of a $G$-Brownian motion is no longer deterministic. Based on this framework, Hu et al. \cite{HJPS1} investigated the stochastic differential equations driven by $G$-Brownian motion ($G$-BSDEs). The solution of a $G$-BSDE with terminal value $\xi$, generators $f,g$ is a triple of processes $(Y,Z,K)$ satisfying the following equation
\begin{displaymath}
Y_t=\xi+\int_t^T f(s,Y_s,Z_s)ds+\int_t^T g(s,Y_s,Z_s)d\langle B\rangle_s-\int_t^T Z_s dB_s-(K_T-K_t),
\end{displaymath} 
where the generators $f,g$ are Lipschitz continuous with respect to $(y,z)$. Compared with the classical BSDE theory, there is an additional nonincreasing $G$-martingale $K$ in this equation, which makes it difficult to construct a contraction mapping. The solution is constructed by applying the Galerkin approximation technique and the PDE approach. In the accompanying paper \cite{HJPS2}, the comparison theorem, Girsanov transformation and the nonlinear Feynman-Kac formula has been established. 

Recently, many efforts have been made to relax the Lipschitz assumption on the generators $f,g$. For instance, Hu, Lin and Soumana Hima \cite{HLS}  first investigated the $G$-BSDE with generators which have quadratic growth in the variable $z$. Hu, Qu and Wang \cite{HQW} introduced the $G$-BSDEs with time-varying coefficients. Under some appropriate assumptions, the terminal time can be extended to infinity. Wang and Zheng \cite{WZ} and Sun \cite{S} studied the case when the generators are uniformly continuous. More specifically, in \cite{WZ} the generators are Lipschitz continuous in the variable $y$ and uniformly continuous in $z$ while in \cite{S} the generators are uniformly continuous in $(y,z)$. He \cite{He} considered the $G$-BSDEs whose generators satisfy the so-called $\beta$-order Mao's condition and obtained the wellposedness results.

In some typical cases, such as pricing for American options, we need to study the case that the first component $Y$ of the solution to the $G$-BSDE is required to be above a given continuous process $S$, which leads to the study of reflected $G$-BSDEs. The prescribed process $S$ is called the lower obstacle and the solution to the reflected $G$-BSDE with parameters $(\xi,f,g,S)$ is a triple of processes $(Y,Z,A)$ satisfying
\begin{displaymath}
\begin{cases}
Y_t=\xi+\int_t^T f(s,Y_s,Z_s)ds+\int_t^T g(s,Y_s,Z_s)d\langle B\rangle_s
-\int_t^T Z_s dB_s+(A_T-A_t);\\
Y_t\geq S_t, 0\leq t\leq T, \textrm{ and } \{-\int_0^t (Y_s-S_s)dA_s\}_{t\in[0,T]} \textrm{ is a nonincreasing $G$-martingale.}
\end{cases}
\end{displaymath}
It is worth pointing out that due to the appearance of the nonincreasing $G$-martingale in $G$-BSDE, the nondecreasing process $A$ in the reflected $G$-BSDE satisfies the above so-called martingale condition instead of the Skorohod condition as in the classical case. Under the Lipschitz condition on the coefficients $f,g$ and assuming that the lower obstacle is either a $G$-It\^{o} process or a continuous process bounded from above, \cite{LPSH} obtained the existence and uniqueness result, the comparison theorem and the nonlinear Feynman-Kac formula for reflected $G$-BSDEs. Under the $G$-expectation framework, the upper obstacle case is significantly different from the lower obstacle case. Let $S$ be the upper obstacle, which means that the solution $Y$ should be smaller than this process. The solution of reflected $G$-BSDE with an upper obstacle $S$ is a triple $(Y,Z,A)$ satisfying the above conditions with $Y\geq S$ replacing by $Y\leq S$. Moreover, here $A$ is no longer a nondecreasing process but a bounded variation process, which makes it difficulty to derive a priori estimates. Li and Peng \cite{LP} applied the penalization method to construct the solution when the obstacle $S$ is a $G$-It\^{o} process and then showed that this solution is the maximal one using a variant comparison theorem. Recently, Li and Song \cite{LS} investigated the reflected $G$-BSDEs with double obstacles. By introducing the approximate Skorohod condition, they obtained the wellposedness of this problem when the upper obstacle is a generalized $G$-It\^{o} process, which makes it possible to relax the assumption on the obstacle processes for the single reflected case.

Note that the above mentioned reflected $G$-BSDEs require the Lipschitz assumption on the coefficients. To our best knowledge, \cite{CT} is the first paper to consider the reflected $G$-BSDEs with non-Lipschitz coefficients. More precisely, in \cite{CT} the coefficients $f,g$ have quadratic growth in the second variable $z$ and are Lipschitz continuous in $y$. Based on the non-reflected case \cite{HLS}, using the results of $G$-BMO martingale, $G$-Girsanov transformation, some a priori estimates are obtained and the solution  is constructed by approximation via penalization. In the present paper, we aim to study the reflected $G$-BSDEs when the coefficients does not satisfy the Lipschitz condition in the first variable $y$. Actually, we assume that $f,g$ satisfy the $\beta$-order Mao's condition (see (H1) in Section 2).

The paper is organized as follows. Section 2 is dedicated to preliminaries on the $G$-framework, the reflected $G$-BSDEs with Lipschitz coefficients and $G$-BSDEs with non-Lipschitz coefficients. In Section 3, we establish the uniqueness of reflected $G$-BSDEs under $\beta$-order Mao's condition. Then, we apply the Picard iteration method to obtain the existence result in Section 4. In Section 5, the solution is constructed by approximation via penalization, which will be helpful to derive the comparison theorem.

\section{Preliminaries}

In this section, we review some notations and results in the $G$-expectation framework, including the $G$-It\^{o} calculus,  the reflected $G$-BSDEs and the $G$-BSDEs under $\beta$-order Mao's condition. For simplicity, we only consider the one-dimensional case. For more details about $G$-expectation theory, we refer to the papers  
	\cite{P07a}, \cite{P08a}, \cite{P10}.

\subsection{$G$-expectation and $G$-It\^{o}'s calculus}

Let $\Omega=C_{0}([0,\infty);\mathbb{R})$, the space of
	real-valued continuous functions starting from the origin, be endowed
	with the following norm,
	\begin{displaymath}
		\rho(\omega^1,\omega^2):=\sum_{i=1}^\infty 2^{-i}[(\max_{t\in[0,i]}|\omega_t^1-\omega_t^2|)\wedge 1], \textrm{ for } \omega^1,\omega^2\in\Omega.
	\end{displaymath}
	Let  $B$ be the canonical
	process on $\Omega$. We define
	\[
	L_{ip} (\Omega):=\{ \varphi(B_{t_{1}},...,B_{t_{n}}):  \ n\in\mathbb {N}, \ t_{1}
	,\cdots, t_{n}\in\lbrack0,\infty), \ \varphi\in C_{b,Lip}(\mathbb{R}^{ n})\},
	\]
	where $C_{b,Lip}(\mathbb{R}^{ n})$ is the set of bounded Lipschitz functions on $\mathbb{R}^{n}$.  Let $(\Omega,L_{ip}(\Omega),\hat{\mathbb{E}})$ be the $G$-expectation space, where the function $G:\mathbb{R}\rightarrow\mathbb{R}$ is defined by
	\begin{displaymath}
		G(a):=\frac{1}{2}\hat{\mathbb{E}}[aB_1^2]=\frac{1}{2}(\bar{\sigma}^2a^+-\underline{\sigma}^2a^-).
	\end{displaymath}
 In this paper, we assume that $G$ is non-degenerate, i.e., $\underline{\sigma}^2 >0$. The (conditional) $G$-expectation for $\xi\in L_{ip}(\Omega)$ can be calculated as follows. Suppose that $\xi$ can be represented as
    \begin{displaymath}
    	\xi=\varphi(B_{{t_1}}, B_{t_2},\cdots,B_{t_n}).
\end{displaymath}
    Then, for $t\in[t_{k-1},t_k)$, $k=1,\cdots,n$,
\begin{displaymath}
	\hat{\mathbb{E}}_{t}[\varphi(B_{{t_1}}, B_{t_2},\cdots,B_{t_n})]=u_k(t, B_t;B_{t_1},\cdots,B_{t_{k-1}}),
\end{displaymath}
where, for any $k=1,\cdots,n$, $u_k(t,x;x_1,\cdots,x_{k-1})$ is a function of $(t,x)$ parameterized by $(x_1,\cdots,x_{k-1})$ such that it solves the following fully nonlinear PDE 
\begin{displaymath}
\begin{cases}
	\partial_t u_k+G(\partial_x^2 u_k)=0, (t,x)\in [t_{k-1},t_k)\times\mathbb{R};\\
	u_k(t_k,x;x_1,\cdots,x_{k-1})=u_{k+1}(t_k,x;x_1,\cdots,x_{k-1},x), k<n
\end{cases}
\end{displaymath}
and $u_n(t_n,x;x_1,\cdots,x_{n-1})=\varphi(x_1,\cdots,x_{n-1},x)$. 
	
	For each $p\geq1$,   the completion of $L_{ip} (\Omega)$ under the norm $\Vert\xi\Vert_{L_{G}^{p}}:=(\hat{\mathbb{E}}[|\xi|^{p}])^{1/p}$ is denoted by $L_{G}^{p}(\Omega)$.  The conditional $G$-expectation $\mathbb{\hat{E}}_{t}[\cdot]$ can be
	extended continuously to the completion $L_{G}^{p}(\Omega)$. The canonical process $B$ is the 1-dimensional $G$-Brownian motion in this space.
		
	For each fixed $T\geq 0$, set $\Omega_T=\{\omega_{\cdot\wedge T}:\omega\in \Omega\}$. We may define $L_{ip}(\Omega_T)$ and $L_G^p(\Omega_T)$ similarly.  Besides, Denis, Hu and Peng \cite{DHP11} proved that the $G$-expectation has the following representation.
	\begin{theorem}[\cite{DHP11}]
		\label{the1.1}  There exists a weakly compact set
		$\mathcal{P}$ of probability
		measures on $(\Omega,\mathcal{B}(\Omega))$, such that
		\[
		\hat{\mathbb{E}}[\xi]=\sup_{P\in\mathcal{P}}E_{P}[\xi] \text{ for all } \xi\in  {L}_{G}^{1}{(\Omega)}.
		\]
		$\mathcal{P}$ is called the set that represents $\hat{\mathbb{E}}$.
	\end{theorem}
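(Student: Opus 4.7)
My plan is to follow the Denis--Hu--Peng approach: construct $\mathcal{P}$ as a family of martingale laws associated with a controlled diffusion, identify the $G$-expectation with the value function of that control problem, and then extend by density while proving weak compactness via tightness.

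First, I would fix an auxiliary probability space $(\Omega^0,\cF^0,P^0)$ carrying a standard Brownian motion $W$, and let $\Theta$ denote the set of progressively measurable processes $\theta$ with $\theta_s\in[\underline{\sigma},\bar{\sigma}]$. For each $\theta\in\Theta$, set $M^\theta_t:=\int_0^t \theta_s\,dW_s$ and let $P^\theta$ be the law of $M^\theta$ on $(\Omega,\mathcal{B}(\Omega))$ equipped with the metric $\rho$ from the excerpt. I would take $\mathcal{P}$ to be the weak closure of $\{P^\theta:\theta\in\Theta\}$.

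Second, I would prove the representation on $L_{ip}(\Omega)$. For $\xi=\varphi(B_{t_1},\dots,B_{t_n})$, the functions $u_k$ appearing in the construction of $\hat{\mathbb{E}}$ solve $\partial_t u+G(\partial_x^2 u)=0$, which is precisely the HJB equation for the stochastic control problem $\sup_{\theta\in\Theta} E_{P^0}[\varphi(M^\theta_{t_1},\dots,M^\theta_{t_n})]$. A recursive dynamic programming argument from $t_n$ down to $t_0$, combined with the comparison principle for viscosity solutions of the $G$-heat equation (well-posed because $\underline{\sigma}^2>0$), gives $\hat{\mathbb{E}}[\xi]=\sup_{\theta\in\Theta}E_{P^\theta}[\xi]=\sup_{P\in\mathcal{P}}E_P[\xi]$. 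To extend to $\xi\in L^1_G(\Omega)$, I would use density: both $\hat{\mathbb{E}}[\cdot]$ and $\xi\mapsto\sup_{P\in\mathcal{P}}E_P[\xi]$ are sublinear and each dominated by $\|\cdot\|_{L^1_G}$ (the latter because $E_P[|\xi|]\le\hat{\mathbb{E}}[|\xi|]$ on $L_{ip}$, and this bound passes to the closure), so they must agree on the completion $L^1_G(\Omega)$.

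Third, for the weak compactness of $\mathcal{P}$, Prokhorov's theorem reduces the task to tightness on $C_0([0,\infty);\R)$. Uniform Burkholder--Davis--Gundy estimates for $M^\theta$, using only the bound $|\theta|\le\bar{\sigma}$, yield moment estimates of the form $E_{P^0}[|M^\theta_t-M^\theta_s|^{2p}]\le C_p|t-s|^p$ uniformly in $\theta$. Kolmogorov's criterion then gives tightness of $\{P^\theta\}$ on each $C_0([0,N];\R)$, hence on $C_0([0,\infty);\R)$; this tightness is inherited by the weak closure, which therefore is weakly compact.

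The main obstacle is the second step, namely matching the PDE-based definition of $\hat{\mathbb{E}}$ with the value function of a controlled diffusion. This requires piecing together viscosity-solution arguments on each interval $[t_{k-1},t_k)$ via dynamic programming, together with the comparison theory for the $G$-heat equation, which is precisely where non-degeneracy of $G$ enters. Once this identification is secured, density and the Kolmogorov tightness estimate are routine.
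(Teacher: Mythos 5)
The paper states this result purely as a quotation from \cite{DHP11} and offers no proof of its own, so there is nothing internal to compare against; your outline is essentially the original Denis--Hu--Peng argument (laws of controlled stochastic integrals, identification of $\hat{\mathbb{E}}$ with the value function of the $G$-heat equation via dynamic programming and viscosity-solution comparison, extension by density, and weak compactness via uniform Kolmogorov-type tightness estimates), and it is correct in its main lines. Two minor remarks: the comparison principle for the $G$-heat equation, and hence the representation, does not actually require the non-degeneracy $\underline{\sigma}^2>0$ that you invoke; and to make sense of $E_P[\xi]$ simultaneously for all $P\in\mathcal{P}$ when $\xi\in L_G^1(\Omega)$ is only an equivalence class, one needs the quasi-continuity characterization of $L_G^1$, which is where the capacity-theoretic machinery of \cite{DHP11} enters and which your density argument implicitly relies on.
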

	
	Let $\mathcal{P}$ be a weakly compact set that represents $\hat{\mathbb{E}}$.
	For this $\mathcal{P}$, we define the capacity%
	\[
	c(A):=\sup_{P\in\mathcal{P}}P(A),\ A\in\mathcal{B}(\Omega).
	\]
	A set $A\in\mathcal{B}(\Omega_T)$ is called polar if $c(A)=0$.  A
		property holds $``quasi$-$surely"$ (q.s.) if it holds outside a
		polar set. In the following, we do not distinguish the two random variables $X$ and $Y$ if $X=Y$, q.s.
	
	For $\xi\in L_{ip}(\Omega_T)$, let $\mathcal{E}(\xi)=\hat{\mathbb{E}}[\sup_{t\in[0,T]}\hat{\mathbb{E}}_t[\xi]]$ and  $\mathcal{E}$ is called the $G$-evaluation. For $p\geq 1$ and $\xi\in L_{ip}(\Omega_T)$, define $\|\xi\|_{p,\mathcal{E}}=[\mathcal{E}(|\xi|^p)]^{1/p}$ and denote by $L_{\mathcal{E}}^p(\Omega_T)$ the completion of $L_{ip}(\Omega_T)$ under $\|\cdot\|_{p,\mathcal{E}}$. The following theorem can be regarded as  Doob's maximal inequality under $G$-expectation.
	\begin{theorem}[\cite{S11}]\label{the1.2}
		For any $\alpha\geq 1$ and $\delta>0$, $L_G^{\alpha+\delta}(\Omega_T)\subset L_{\mathcal{E}}^{\alpha}(\Omega_T)$. More precisely, for any $1<\gamma<\beta:=(\alpha+\delta)/\alpha$, $\gamma\leq 2$, we have
		\begin{displaymath}
		\|\xi\|_{\alpha,\mathcal{E}}^{\alpha}\leq \gamma^*\{\|\xi\|_{L_G^{\alpha+\delta}}^{\alpha}+14^{1/\gamma}
		C_{\beta/\gamma}\|\xi\|_{L_G^{\alpha+\delta}}^{(\alpha+\delta)/\gamma}\},\quad \forall \xi\in L_{ip}(\Omega_T),
		\end{displaymath}
		where $C_{\beta/\gamma}=\sum_{i=1}^\infty i^{-\beta/\gamma}$, $\gamma^*=\gamma/(\gamma-1)$.
	\end{theorem}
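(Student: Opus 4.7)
The plan is to reduce the continuous-time supremum to discrete maxima over dyadic partitions and to bound each successive refinement by a term belonging to a convergent series. Since $\hat{\mathbb{E}}_t[\xi]$ is not a martingale under any single $P\in\mathcal{P}$, the classical Doob inequality cannot be invoked pointwise in $P$; instead I would exploit the sub-additivity and tower property of the $G$-expectation directly.

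First I would approximate $\sup_{t\in[0,T]}\hat{\mathbb{E}}_t[\xi]$. For $\xi\in L_{ip}(\Omega_T)$, the representation via fully nonlinear parabolic PDEs gives a continuous version of $t\mapsto\hat{\mathbb{E}}_t[\xi]$, so $M_n:=\max_{0\le k\le 2^n}\hat{\mathbb{E}}_{kT/2^n}[\xi]$ increases q.s.\ to this supremum. It therefore suffices to obtain a bound on $\hat{\mathbb{E}}[M_n^\alpha]$ uniform in $n$ and then pass to the limit, extending the estimate from $L_{ip}(\Omega_T)$ to $L_G^{\alpha+\delta}(\Omega_T)$ by density and Fatou.

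The core step is a one-level refinement estimate. Comparing $M_n$ with $M_{n-1}$, the additional points lie at the midpoints $(2k+1)T/2^n$ of the previous partition. Using the tower property $\hat{\mathbb{E}}_{t_k}[\hat{\mathbb{E}}_{t_{k+1/2}}[\xi]]=\hat{\mathbb{E}}_{t_k}[\xi]$ together with a binary maximal inequality for each pair $(\hat{\mathbb{E}}_{t_k}[\xi],\hat{\mathbb{E}}_{t_{k+1/2}}[\xi])$, derived from Markov's inequality applied to the sub-additive functional $\hat{\mathbb{E}}$ and a truncation at level $2^n$, one can control the increment $\hat{\mathbb{E}}[M_n^\alpha]-\hat{\mathbb{E}}[M_{n-1}^\alpha]$ by a quantity of order $14^{1/\gamma}\,n^{-\beta/\gamma}\|\xi\|_{L_G^{\alpha+\delta}}^{(\alpha+\delta)/\gamma}$ for any fixed $1<\gamma<\beta$, $\gamma\le 2$. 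The constant $14^{1/\gamma}$ emerges from combining the weak-$L^1$ constant in the binary estimate with the truncation parameter, while the factor $\gamma^{\ast}=\gamma/(\gamma-1)$ arises from a Hölder-type optimisation that balances the truncated and tail contributions.

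Summing these increments across $n\ge 1$ produces the series $C_{\beta/\gamma}=\sum_{i\ge 1}i^{-\beta/\gamma}$, which converges precisely because $\beta/\gamma>1$; adding the trivial bound $\|\xi\|_{L_G^{\alpha+\delta}}^\alpha$ from the $n=0$ level yields the stated inequality. The main obstacle in this program is the binary maximal estimate: because $\hat{\mathbb{E}}$ is only sub-additive rather than linear, the standard Doob--Kolmogorov reflection trick is unavailable, and one has to replace it by a careful layer-cake decomposition of $(M_n)^\alpha$ together with the capacity-type Chebyshev inequality $c(\hat{\mathbb{E}}_t[\xi]>\lambda)\le\lambda^{-\alpha-\delta}\hat{\mathbb{E}}[|\xi|^{\alpha+\delta}]$, which is precisely the step where the $\delta$-gap between $L_G^{\alpha+\delta}$ and $L_{\mathcal{E}}^\alpha$ is consumed.
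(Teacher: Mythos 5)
First, a point of context: the paper does not prove this statement at all --- it is quoted verbatim from Song \cite{S11} --- so there is no internal argument to compare yours against, and I measure your proposal against Song's original proof. Against that benchmark there is a genuine gap at the central step. The series $C_{\beta/\gamma}=\sum_{i\geq 1}i^{-\beta/\gamma}$ does not index dyadic time-refinement levels; it indexes threshold levels in a layer-cake (Chebyshev summation) decomposition of the form $\hat{\mathbb{E}}[\sup_t\hat{\mathbb{E}}_t[|\xi|^\alpha]]\leq \lambda_0+\lambda_0\sum_{i\geq 1}c(\sup_t\hat{\mathbb{E}}_t[|\xi|^\alpha]>i\lambda_0)$, which is then fed by a weak-type maximal estimate $c(\sup_{t\in[0,T]}\hat{\mathbb{E}}_t[\eta]\geq\lambda)\leq 14\,\lambda^{-\gamma}\hat{\mathbb{E}}[|\eta|^\gamma]$ valid for $1\leq\gamma\leq 2$. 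Your claim that the one-level refinement increment $\hat{\mathbb{E}}[M_n^\alpha]-\hat{\mathbb{E}}[M_{n-1}^\alpha]$ decays like $n^{-\beta/\gamma}$ is asserted without any mechanism, and I see no reason it should hold: the telescoping sum over dyadic levels converges only because the limit is finite (which is what is to be proved), and nothing in sub-additivity or the tower property produces polynomial decay in the refinement index $n$. Reverse-engineering the constants $14^{1/\gamma}$ and $\gamma^*=\gamma/(\gamma-1)$ into a scheme whose summation variable is the wrong one is the tell-tale sign that the argument would not close.

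The second, related gap is that your proposal never uses, and cannot explain, the hypothesis $\gamma\leq 2$. In Song's proof this restriction is essential and comes from the actual engine of the theorem: for $\xi\in L_{ip}(\Omega_T)$ one has the $G$-martingale decomposition $\hat{\mathbb{E}}_t[\xi]=\hat{\mathbb{E}}[\xi]+\int_0^t z_s\,dB_s-K_t$ with $K$ nonincreasing, so that $\sup_t\hat{\mathbb{E}}_t[\xi]\leq\hat{\mathbb{E}}[\xi]+\sup_t|\int_0^t z_s\,dB_s|$, and the stochastic integral is controlled through the $L^2$-energy estimate for $z$ together with the Burkholder--Davis--Gundy and Doob inequalities applied under each $P\in\mathcal{P}$; a truncation and interpolation then yield the weak $(\gamma,\gamma)$ bound for $1\leq\gamma\leq 2$ with the explicit constant $14$. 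Your substitute --- ``Markov's inequality applied to the sub-additive functional and a truncation at level $2^n$'' yielding a ``binary maximal inequality'' --- is precisely the step that does not follow from sub-additivity and the tower property alone; and the pointwise-in-$t$ Chebyshev bound $c(\hat{\mathbb{E}}_t[\xi]>\lambda)\leq\lambda^{-(\alpha+\delta)}\hat{\mathbb{E}}[|\xi|^{\alpha+\delta}]$ you invoke at the end cannot be summed over a continuum of times. The outer scaffolding (reduction to discrete maxima, continuity of $t\mapsto\hat{\mathbb{E}}_t[\xi]$ for $\xi\in L_{ip}(\Omega_T)$, extension by density and Fatou) is fine, but the theorem's content lives exactly in the step you left unproved.
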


    For $T>0$ and $p\geq 1$, the following spaces will be frequently used in this paper.
    \begin{itemize}
    	\item $M_G^0(0,T):=\{\eta: \eta_{t}(\omega)=\sum_{j=0}^{N-1}\xi_{j}(\omega)\textbf{1}_{[t_{j},t_{j+1})}(t),$ where  $\xi_j\in L_{ip}(\Omega_{t_j})$, $t_0\leq \cdots\leq t_N$ is a partition of $[0,T]\}$;
    	\item $M_G^p(0,T)$ is the completion of $M_G^0(0,T)$ under the norm $\Vert\eta\Vert_{M_{G}^{p}}$;
    	\item $H_G^p(0,T)$ is the completion of $M_G^0(0,T)$ under the norm $\|\eta\|_{H_G^p}$;
    	\item $S_G^0(0,T)=\{h(t,B_{t_1\wedge t}, \ldots,B_{t_n\wedge t}):t_1,\ldots,t_n\in[0,T],h\in C_{b,Lip}(\mathbb{R}^{n+1})\}$;
    	\item $S_G^p(0,T)$ is the completion of $S_G^0(0,T)$ under the norm
    	$\|\eta\|_{S_G^p}$,
    \end{itemize}
    where 
    \begin{align*}
    &\Vert\eta\Vert_{M_{G}^{p}}:=(\mathbb{\hat{E}}[\int_{0}^{T}|\eta_{s}|^{p}ds])^{1/p}, \\
    &\|\eta\|_{H_G^p}:=\{\hat{\mathbb{E}}[(\int_0^T|\eta_s|^2ds)^{p/2}]\}^{1/p},\\
     &\|\eta\|_{S_G^p}=\{\hat{\mathbb{E}}[\sup_{t\in[0,T]}|\eta_t|^p]\}^{1/p}.
     \end{align*}

   Let $\langle B\rangle$ be the quadratic variation process of the $G$-Brownian motion $B$. For two processes $\eta\in M_G^p(0,T)$ and $\zeta\in H_G^p(0,T)$, Peng established the $G$-It\^{o} integrals $\int_0^\cdot \eta_s d\langle B\rangle_s$ and $\int_0^\cdot \zeta_s dB_s$.  Similar to the classical Burkholder--Davis--Gundy inequality, the following property holds.

    \begin{proposition}[\cite{HJPS2}]\label{BDG}
	If $\eta\in H_G^{\alpha}(0,T)$ with $\alpha\geq 1$ and $p\in(0,\alpha]$, then
	$\sup_{u\in[t,T]}|\int_t^u\eta_s dB_s|^p\in L_G^1(\Omega_T)$ and
	\begin{displaymath}
	\underline{\sigma}^p c\hat{\mathbb{E}}_t[(\int_t^T |\eta_s|^2ds)^{p/2}]\leq
	\hat{\mathbb{E}}_t[\sup_{u\in[t,T]}|\int_t^u\eta_s dB_s|^p]\leq
	\bar{\sigma}^p C\hat{\mathbb{E}}_t[(\int_t^T |\eta_s|^2ds)^{p/2}],
	\end{displaymath}
	where $0<c<C<\infty$ are constants depending on $p$.
    \end{proposition}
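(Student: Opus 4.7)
The plan is to deduce both bounds from the classical Burkholder--Davis--Gundy inequality applied under each probability $P\in\mathcal{P}$, and then to aggregate the resulting conditional estimates via the representation $\hat{\mathbb{E}}_t[\cdot]=\esssup_{P\in\mathcal{P}} E_P[\cdot\,|\,\mathcal{F}_t]$ that extends Theorem~\ref{the1.1} to conditional $G$-expectations. First I would reduce to the case of a simple process $\eta\in M_G^0(0,T)$, for which $\int_0^\cdot \eta_s\,dB_s$ coincides, $P$-a.s.\ for each $P\in\mathcal{P}$, with the ordinary It\^o integral of $\eta$ against the continuous $P$-martingale $B$.

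For a fixed $P\in\mathcal{P}$, the quadratic variation of $B$ has the form $d\langle B\rangle_s=\sigma_s^2\,ds$ with $\underline{\sigma}^2\leq \sigma_s^2\leq \bar{\sigma}^2$ holding $dP\otimes dt$-a.e., so the classical conditional BDG inequality yields constants $0<c_p<C_p<\infty$ such that
\[
c_p E_P\!\left[\left(\int_t^T\!\eta_s^2\,d\langle B\rangle_s\right)^{p/2}\!\big|\,\mathcal{F}_t\right]\leq E_P\!\left[\sup_{u\in[t,T]}\left|\int_t^u\!\eta_s\,dB_s\right|^p\!\big|\,\mathcal{F}_t\right]\leq C_p E_P\!\left[\left(\int_t^T\!\eta_s^2\,d\langle B\rangle_s\right)^{p/2}\!\big|\,\mathcal{F}_t\right],
\]
and the bounds on $\sigma_s^2$ then let us replace $d\langle B\rangle_s$ by $ds$ at the cost of multiplicative factors $\underline{\sigma}^p$ and $\bar{\sigma}^p$. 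Taking $\esssup$ over $P\in\mathcal{P}$ and exploiting the stability of $\mathcal{P}$ under pasting at time $t$ upgrades the outer $E_P[\cdot|\mathcal{F}_t]$ on each side to $\hat{\mathbb{E}}_t[\cdot]$. A density argument then extends the inequality from $M_G^0(0,T)$ to all $\eta\in H_G^\alpha(0,T)$, using completeness of the norms and the $L^p$-continuity of the $G$-It\^o integral.

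The hard part will be the aggregation step. The conditional $G$-expectation $\hat{\mathbb{E}}_t$ is not a conditional expectation under any single element of $\mathcal{P}$, so identifying it with the essential supremum of the classical $E_P[\cdot|\mathcal{F}_t]$ requires that $\mathcal{P}$ be closed under concatenation (pasting), together with some measurable-selection care in choosing simultaneous $\mathcal{F}_t$-versions of those classical conditional expectations. Once this machinery is in place, the BDG bounds for $G$-It\^o integrals follow from the classical ones with essentially no further work, and the quasi-sure integrability claim $\sup_{u\in[t,T]}|\int_t^u\eta_s\,dB_s|^p\in L_G^1(\Omega_T)$ is a byproduct of the upper estimate together with $\eta\in H_G^\alpha(0,T)$ and $p\leq\alpha$.
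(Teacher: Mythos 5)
The paper does not prove this proposition; it is imported verbatim from \cite{HJPS2}, so there is no internal argument to compare against. Your route --- classical conditional Burkholder--Davis--Gundy under each $P\in\mathcal{P}$, using that $B$ is a continuous $P$-martingale with $\underline{\sigma}^2\,ds\leq d\langle B\rangle_s\leq\bar{\sigma}^2\,ds$, followed by aggregation --- is exactly the standard proof in the $G$-expectation literature, and it is sound: because the volatility bounds are uniform over $\mathcal{P}$, the directions of the inequalities survive the passage to the essential supremum on both sides, producing precisely the constants $\underline{\sigma}^p c$ and $\bar{\sigma}^p C$. Two points deserve more care than your sketch gives them. First, the identity you invoke is not $\hat{\mathbb{E}}_t[\cdot]=\esssup_{P\in\mathcal{P}}E_P[\cdot\,|\,\mathcal{F}_t]$ as written (the measures in $\mathcal{P}$ are mutually singular, so a single essential supremum over all of $\mathcal{P}$ is not meaningful); the correct statement is that for each fixed $P$, $\hat{\mathbb{E}}_t[\xi]=\esssup_{Q\in\mathcal{P}(t,P)}E_Q[\xi\,|\,\mathcal{F}_t]$ $P$-a.s., where $\mathcal{P}(t,P)$ is the pasting-stable subfamily agreeing with $P$ on $\mathcal{F}_t$ --- you flag the pasting issue, but the quantifier structure matters for the quasi-sure formulation of the conclusion. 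Second, the claim $\sup_{u\in[t,T]}|\int_t^u\eta_s\,dB_s|^p\in L_G^1(\Omega_T)$ is not merely finiteness of the upper bound: membership in $L_G^1$ requires quasi-continuity and uniform integrability in the Denis--Hu--Peng sense, which is obtained by first verifying it for $\eta\in M_G^0(0,T)$ and then using the upper BDG estimate to show the approximating suprema form a Cauchy sequence in $L_G^1$-norm; your density step covers this in outline but that is where the actual work lies.
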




\subsection{Reflected $G$-BSDEs with Lipschitz coefficients}

In this subsection, we present some basic results about reflected $G$-BSDEs obtained in \cite{LPSH} and \cite{LS}. We are given the following parameters: the generators $f$ and $g$, the lower obstacle $\{S_t\}_{t\in[0,T]}$ and the terminal value $\xi$.  A triple of processes $(Y,Z,A)$ is called a solution of reflected $G$-BSDE with parameters $(\xi,f,g,S)$ if for some $2\leq \alpha\leq \beta$ (where $\beta$ is a constant given in the assumption below) the following properties hold:
\begin{description}
\item[(a)]$(Y,Z,A)\in\mathcal{S}_G^{\alpha}(0,T)$ and $Y_t\geq S_t$, $0\leq t\leq T$;
\item[(b)]$Y_t=\xi+\int_t^T f(s,Y_s,Z_s)ds+\int_t^T g(s,Y_s,Z_s)d\langle B\rangle_s
-\int_t^T Z_s dB_s+(A_T-A_t)$;
\item[(c)]$\{-\int_0^t (Y_s-S_s)dA_s\}_{t\in[0,T]}$ is a nonincreasing $G$-martingale.
\end{description}
Here, we denote by $\mathcal{S}_G^{\alpha}(0,T)$ the collection of processes $(Y,Z,A)$ such that $Y\in S_G^{\alpha}(0,T)$, $Z\in H_G^{\alpha}(0,T)$, $A$ is a continuous nondecreasing process with $A_0=0$ and $A\in S_G^\alpha(0,T)$. 

Here $f$ and $g$ are maps
	\begin{displaymath}
	f(t,\omega,y,z),g(t,\omega,y,z):[0,T]\times\Omega_T\times\mathbb{R}^2\rightarrow\mathbb{R}.
	\end{displaymath}
Below, we list the assumptions on the parameters of the reflected $G$-BSDEs.

There exists some $\beta>2$ such that
	\begin{itemize}
		\item[(A1)] $|f(t,y,z)-f(t,y',z')|+|g(t,y,z)-g(t,y',z')|\leq \kappa(|y-y'|+|z-z'|)$ for some $\kappa>0$;
		\item[(A2)] for any $y,z$, $f(\cdot,\cdot,y,z)$, $g(\cdot,\cdot,y,z)\in M_G^\beta(0,T)$;
		\item[(A3)] $\{S_t\}_{t\in[0,T]}\in S_G^\beta(0,T)$ and $S_t\leq I_t$, $t\in[0,T]$, $q.s.$, where $I$ is a generalized $G$-It\^{o} process of the following form
		\begin{displaymath}
		I_t=I_0+\int_0^t b^I(s)ds+\int_0^t \sigma^I(s)dB_s+K^I_t,
		\end{displaymath}
	abd $\{b^I(t)\}_{t\in[0,T]},\{\sigma^I(t)\}_{t\in[0,T]}\in S_G^\beta(0,T)$, $K^I\in S_G^\beta(0,T)$ is a nonincreasing $G$-martingale;
		\item[(A4)] $\xi\in L_G^\beta(\Omega_T)$ and $S_T\leq \xi$, $q.s.$
	\end{itemize}
	
	The following theorem indicates that the reflected $G$-BSDEs admits a unique solution if the parameters satisfy the above conditions.
	 \begin{theorem}[\cite{LPSH},\cite{LS}]\label{the1.14}
	 	Suppose that $(\xi,f,g,S)$ satisfy \textsc{(A1)}--\textsc{(A4)}. Then, for any $2\leq \alpha<\beta$, the reflected $G$-BSDE with parameters  $(\xi,f,g,S)$ has a unique solution $(Y,Z,A)\in \mathcal{S}_G^\alpha(0,T)$. Moreover, there exists a constant $C:=C(\alpha,T, \kappa,G)>0$ such that
 	\begin{displaymath}
 	|Y_t|^\alpha\leq C\hat{\mathbb{E}}_t[|\xi|^\alpha+\int_t^T(|f(s,0,0)|^\alpha+|g(s,0,0)|^\alpha+|b^I(s)|^\alpha+|\sigma^I(s)|^\alpha )ds+\sup_{s\in[t,T]}|I_s|^\alpha].
 	\end{displaymath}	 	
	 \end{theorem}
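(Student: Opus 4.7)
The plan is to address uniqueness and existence separately, then derive the a priori bound, following the strategies of \cite{LPSH} and \cite{LS}.

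For uniqueness, suppose $(Y^1,Z^1,A^1)$ and $(Y^2,Z^2,A^2)$ are two solutions in $\mathcal{S}_G^\alpha(0,T)$. I would apply $G$-It\^o's formula to $|Y^1_t-Y^2_t|^2$ and take conditional $G$-expectation. The Lipschitz hypothesis \textsc{(A1)} together with Proposition~\ref{BDG} controls the contributions from $f$, $g$, and the stochastic integral in $Z^1-Z^2$. The crucial step is to eliminate the bounded variation terms $d(A^1-A^2)$: writing
\begin{displaymath}
\int_t^T (Y^1_s-Y^2_s)\,d(A^1_s-A^2_s)=-\int_t^T (Y^2_s-S_s)\,dA^1_s-\int_t^T (Y^1_s-S_s)\,dA^2_s+M_T-M_t,
\end{displaymath}
where $M$ is a nonincreasing $G$-martingale by \textsc{(c)}, and noting $Y^i\geq S$, the right-hand side has nonpositive $G$-expectation, leaving a Gronwall-type inequality that forces $Y^1=Y^2$, and hence $Z^1=Z^2$ and $A^1=A^2$.

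For existence, I would use the penalization approach: for each $n\in\mathbb{N}$, solve the (non-reflected) $G$-BSDE
\begin{displaymath}
Y^n_t=\xi+\int_t^T\!\bigl[f(s,Y^n_s,Z^n_s)+n(Y^n_s-S_s)^-\bigr]ds+\int_t^T\! g(s,Y^n_s,Z^n_s)\,d\langle B\rangle_s-\int_t^T\! Z^n_s\,dB_s-(K^n_T-K^n_t),
\end{displaymath}
whose wellposedness is provided by \cite{HJPS1}. Setting $A^n_t:=n\int_0^t (Y^n_s-S_s)^-\,ds$, the task is to extract uniform $\mathcal{S}_G^\alpha$-estimates for $(Y^n,Z^n,A^n)$ and pass to the limit. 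The main obstacle here, which is precisely where \textsc{(A3)} is used, is bounding $A^n_T$ uniformly in $n$: by comparing $Y^n$ with the $G$-BSDE whose terminal value and generator are obtained by including the dominating $G$-It\^o process $I$ (and using \textsc{(A4)}), one controls $(Y^n-S)^-$ via the $G$-semimartingale structure of $I$, so that $\hat{\mathbb{E}}[|A^n_T|^\alpha]$ stays bounded. A monotone-convergence/comparison argument then shows $Y^n\uparrow Y$ and $A^n\to A$ in the appropriate norms, with $-\int_0^\cdot (Y_s-S_s)\,dA_s$ being a nonincreasing $G$-martingale (cf. the argument via generalized Skorohod-type condition in \cite{LS}).

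For the a priori estimate on $|Y_t|^\alpha$, I would apply $G$-It\^o's formula to $|Y_t|^\alpha$ on $[t,T]$, take conditional $G$-expectation, and absorb the $f,g$ contributions through \textsc{(A1)} and Young's inequality, with the $dB_s$ term vanishing by Proposition~\ref{BDG} (after standard localization). The delicate term is
\begin{displaymath}
\alpha\hat{\mathbb{E}}_t\Bigl[\int_t^T |Y_s|^{\alpha-2}Y_s\,dA_s\Bigr],
\end{displaymath}
which I split as $\int_t^T |Y_s|^{\alpha-2}(Y_s-S_s)\,dA_s+\int_t^T |Y_s|^{\alpha-2}S_s\,dA_s$; the first piece is handled by the martingale condition \textsc{(c)} combined with the fact that $|Y_s|^{\alpha-2}\geq 0$, and the second, after bounding $S_s\leq I_s$, is estimated by applying $G$-It\^o integration by parts against $I$ to trade the $dA$-integral for $|b^I|,|\sigma^I|,|I|$ terms. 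A final Gronwall argument delivers the stated bound with constant $C(\alpha,T,\kappa,G)$.
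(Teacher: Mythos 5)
The paper does not actually prove Theorem \ref{the1.14}: it is quoted from \cite{LPSH} and \cite{LS}. Measured against those proofs (and against the analogous arguments this paper does carry out in Section 3 and Section 5), your architecture --- It\^{o}'s formula plus the martingale condition for uniqueness and the a priori bound, penalization for existence --- is the standard one, but your handling of the bounded-variation terms contains a genuine gap, and it recurs in both the uniqueness step and the estimate for $|Y_t|^\alpha$. In your decomposition the remainder is $M_T-M_t=\int_t^T(Y^1_s-S_s)\,dA^1_s+\int_t^T(Y^2_s-S_s)\,dA^2_s$, which is pathwise \emph{nonnegative}: it equals $-(N^1_T-N^1_t)-(N^2_T-N^2_t)$ for the nonincreasing $G$-martingales $N^i_\cdot=-\int_0^\cdot(Y^i_s-S_s)\,dA^i_s$, so $M$ is nondecreasing, not a nonincreasing $G$-martingale. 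Because $\hat{\mathbb{E}}$ is only sublinear, such a term cannot be discarded: for a nonincreasing $G$-martingale $N$ one has $\hat{\mathbb{E}}_t[-(N_T-N_t)]\geq 0$ and typically $>0$ (for instance $N_t=\langle B\rangle_t-\bar{\sigma}^2t$ gives $\hat{\mathbb{E}}[-(N_T-N_0)]=(\bar{\sigma}^2-\underline{\sigma}^2)T$), and a sum of two nonincreasing $G$-martingales need not be a $G$-martingale. Hence the claim that ``the right-hand side has nonpositive $G$-expectation'' does not follow, and the Gronwall argument does not close.

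The route that works, used in \cite{LPSH} and reproduced in this paper's proof of Theorem \ref{main} (uniqueness) and in Lemma \ref{Conv-Y-Z-A}, is to bound $\tilde Y_s\,d\hat A_s\leq(\tilde Y_s)^+\,dA^1_s+(\tilde Y_s)^-\,dA^2_s$, note that $(\tilde Y_s)^+\leq Y^1_s-S_s$ and $(\tilde Y_s)^-\leq Y^2_s-S_s$ so each integral is the increment of a process dominated by $\int_0^\cdot(Y^i_s-S_s)\,dA^i_s$ with a density in $[0,1]$, and then invoke the lemma of \cite{LPSH} guaranteeing that the full combination $\int_0^\cdot(\cdots)\,dB_s-\int_0^\cdot[(\tilde Y_s)^+\,dA^1_s+(\tilde Y_s)^-\,dA^2_s]$ is a genuine $G$-martingale, whose increment is annihilated by $\hat{\mathbb{E}}_t$. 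The same device applied to $\bar Y=Y-I$ (using $(\bar Y_s)^+\leq Y_s-S_s$ and the nonincreasing $G$-martingale $K^I$) is what produces the $b^I,\sigma^I,I$ terms in the stated estimate, rather than an integration by parts of the $dA$-integral against $I$. Your penalization sketch for existence is consistent with \cite{LPSH}, though the limit there is obtained by a Cauchy argument in $S_G^\alpha$ combined with the convergence of $(Y^n-S)^-$, not by upward monotone convergence, which is not generally available under $\hat{\mathbb{E}}$.
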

	
	Now, we provide some a priori estimates, which will be used in Section 4.
	 \begin{proposition}[\cite{LPSH}]\label{the1.6}
	 	Let $f,g$  satisfy (A1) and (A2). Assume
	 	\begin{displaymath}
	 	Y_t=\xi+\int_t^T f(s,Y_s,Z_s)ds+\int_t^T g(s,Y_s,Z_s)d\langle B\rangle_s-\int_t^T Z_sdB_s+(A_T-A_t),
	 	\end{displaymath}
	 	where $(Y,Z,A)\in\mathcal{S}_G^\alpha(0,T)$ with $2\leq \alpha\leq \beta$. Then, there exists a constant $C:=C(\alpha, T, \kappa,G)>0$ such that for each $t\in[0,T]$,
	 	\begin{align*}
	 	\hat{\mathbb{E}}_t[(\int_t^T |Z_s|^2ds)^{\frac{\alpha}{2}}]&\leq C\{\hat{\mathbb{E}}_t[\sup_{s\in[t,T]}|Y_s|^\alpha]
	 	+(\hat{\mathbb{E}}_t[\sup_{s\in[t,T]}|Y_s|^\alpha])^{1/2}(\hat{\mathbb{E}}_t[(\int_t^T h_s ds)^\alpha])^{1/2}\},\\
	 	\hat{\mathbb{E}}_t[|A_T-A_t|^\alpha]&\leq C\{\hat{\mathbb{E}}_t[\sup_{s\in[t,T]}|Y_s|^\alpha]+\hat{\mathbb{E}}_t[(\int_t^T h_s ds)^\alpha]\},
	 	\end{align*}
	 	where $h_s=|f(s,0,0)|+|g(s,0,0)|$.
	 \end{proposition}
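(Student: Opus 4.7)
The plan is to combine the $G$-It\^{o} formula applied to $|Y|^2$ with the BSDE itself to derive coupled estimates for $Z$ and $A$, and then decouple them by substitution and Young's inequality.

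Applying the $G$-It\^{o} formula to $|Y_s|^2$ on $[t,T]$ gives the energy identity
\begin{displaymath}
|Y_t|^2+\int_t^T |Z_s|^2 d\langle B\rangle_s=|\xi|^2+2\int_t^T Y_s f(s,Y_s,Z_s)ds+2\int_t^T Y_s g(s,Y_s,Z_s)d\langle B\rangle_s+2\int_t^T Y_s dA_s-2\int_t^T Y_s Z_s dB_s.
\end{displaymath}
I would then bound $|f|,|g|\leq h_s+\kappa(|Y_s|+|Z_s|)$ via (A1) and apply Young's inequality to the cross term $|Y_s||Z_s|$ with a small enough parameter, so that, using $d\langle B\rangle_s\geq\underline{\sigma}^2 ds$, the resulting $|Z_s|^2$ contribution on the right is absorbed into the left, yielding a pointwise bound of the form
\begin{displaymath}
\int_t^T |Z_s|^2 ds\leq C\Bigl\{\sup_{s\in[t,T]}|Y_s|^2+\sup_{s\in[t,T]}|Y_s|\int_t^T h_s ds+\sup_{s\in[t,T]}|Y_s|(A_T-A_t)+\Bigl|\int_t^T Y_s Z_s dB_s\Bigr|\Bigr\}.
\end{displaymath}
Raising to the power $\alpha/2$, applying $\hat{\mathbb{E}}_t$, and controlling the stochastic integral via Proposition \ref{BDG} (followed by Cauchy-Schwarz and Young's to re-absorb the resulting $\hat{\mathbb{E}}_t[(\int|Z|^2)^{\alpha/2}]$ contribution on the right), the product $\sup|Y|^{\alpha/2}\cdot(\int h_s ds)^{\alpha/2}$ splits by Cauchy-Schwarz into exactly the $\sqrt{\cdot}\sqrt{\cdot}$ shape that appears in the first inequality of the statement.

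For the bound on $A$, I would rearrange the BSDE directly to write
\begin{displaymath}
A_T-A_t=Y_t-\xi-\int_t^T f(s,Y_s,Z_s)ds-\int_t^T g(s,Y_s,Z_s)d\langle B\rangle_s+\int_t^T Z_s dB_s,
\end{displaymath}
take the $\alpha$-th power, apply $\hat{\mathbb{E}}_t$, and invoke Proposition \ref{BDG} for the stochastic integral, obtaining a bound of the form $\hat{\mathbb{E}}_t[|A_T-A_t|^\alpha]\leq C\{\hat{\mathbb{E}}_t[\sup|Y|^\alpha]+\hat{\mathbb{E}}_t[(\int_t^T h_s ds)^\alpha]+\hat{\mathbb{E}}_t[(\int_t^T |Z_s|^2 ds)^{\alpha/2}]\}$. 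Substituting the $Z$ estimate derived above and using $\sqrt{ab}\leq(a+b)/2$ to collapse the mixed term then delivers the second inequality.

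The main obstacle will be the coupling introduced by the terms $\int_t^T Y_s dA_s$ and $\int_t^T Y_s Z_s dB_s$ in the energy identity: after Cauchy-Schwarz these terms reintroduce $\hat{\mathbb{E}}_t[(A_T-A_t)^\alpha]$ and $\hat{\mathbb{E}}_t[(\int|Z_s|^2 ds)^{\alpha/2}]$ on the right side of the $Z$ estimate, so the argument requires careful calibration of the Young's parameters to ensure the circular dependence closes with a finite constant depending only on $\alpha,T,\kappa$ and $G$; no essentially new ideas beyond the classical $G$-BSDE a priori estimates are needed.
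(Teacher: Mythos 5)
Your proposal is correct and follows essentially the same route as the paper: the statement is quoted from \cite{LPSH}, and both that proof and the paper's own proof of the analogous non-Lipschitz version (Proposition \ref{the1.6'}, via \eqref{3.5} and \eqref{3.6}) proceed exactly as you do — $G$-It\^{o}'s formula applied to $|Y|^2$, Proposition \ref{BDG} for the stochastic integral, the rearranged equation for $A_T-A_t$, and a substitution plus Young's inequality to close the circular dependence between the $Z$ and $A$ estimates. No gap.
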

	
	 \begin{proposition}[\cite{LPSH}]\label{the1.7}
	 	For $i=1,2$, let $\xi^i\in L_G^{\beta}(\Omega_T)$, $f^i,g^i$ satisfy (A1) and (A2) for some $\beta>2$. Assume
	 	\begin{displaymath}
	 	Y_t^i=\xi^i+\int_t^T f^i(s,Y^i_s,Z^i_s)ds+\int_t^T g^i(s,Y_s^i,Z_s^i)d\langle B\rangle_s-\int_t^T Z_s^idB_s+(A_T^i-A_t^i),
	 	\end{displaymath}
	 	where $(Y^i,Z^i,A^i)\in\mathcal{S}_G^\alpha(0,T)$ for some $2\leq \alpha\leq \beta$. Set $\hat{Y}_t=Y^1_t-Y^2_t$, $\hat{Z}_t=Z^1_t-Z^2_t$. Then, there exists a constant $C:=C(\alpha,T,\kappa,G)$ such that
	 	\begin{align*}
	 	\hat{\mathbb{E}}[(\int_0^T|\hat{Z}_s|^2ds)^{\frac{\alpha}{2}}]\leq & C_{\alpha}\{(\hat{\mathbb{E}}[\sup_{t\in[0,T]}|\hat{Y}_t|^\alpha])^{1/2}
	 	\sum_{i=1}^2[(\hat{\mathbb{E}}[\sup_{t\in[0,T]}|{Y}^i_t|^\alpha])^{1/2}\\&+(\hat{\mathbb{E}}[(\int_0^T h_s^i ds)^\alpha])^{1/2}]+\hat{\mathbb{E}}[\sup_{t\in[0,T]}|\hat{Y}_t|^\alpha]\},
	 	\end{align*}
	 	where $h_s^i=|f^i(s,0,0)|+|g^i(s,0,0)|$.
	 \end{proposition}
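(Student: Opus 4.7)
The plan is to apply $G$-It\^o's formula to $|\hat{Y}_t|^2$ on $[0,T]$. Setting $\hat{Y}=Y^1-Y^2$, $\hat{Z}=Z^1-Z^2$, $\hat{A}=A^1-A^2$, $\hat{\xi}=\xi^1-\xi^2$, and $\hat{f}_s=f^1(s,Y^1_s,Z^1_s)-f^2(s,Y^2_s,Z^2_s)$ with $\hat{g}_s$ defined analogously, It\^o's formula gives
\begin{align*}
\int_0^T |\hat{Z}_s|^2\,d\langle B\rangle_s \le |\hat{\xi}|^2 &+ 2\int_0^T \hat{Y}_s \hat{f}_s\,ds + 2\int_0^T \hat{Y}_s \hat{g}_s\,d\langle B\rangle_s \\
&- 2\int_0^T \hat{Y}_s \hat{Z}_s\,dB_s + 2\int_0^T \hat{Y}_s\,d\hat{A}_s,
\end{align*}
after dropping the nonnegative $|\hat{Y}_0|^2$. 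The non-degeneracy $\underline{\sigma}^2>0$ lets me replace the left-hand side by $\underline{\sigma}^2\int_0^T|\hat{Z}_s|^2\,ds$ up to a multiplicative constant.

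Next I decompose $\hat{f}_s=[f^1(s,Y^1,Z^1)-f^1(s,Y^2,Z^2)]+[f^1(s,Y^2,Z^2)-f^2(s,Y^2,Z^2)]$, and similarly for $\hat{g}$. By (A1) the first bracket is bounded by $\kappa(|\hat{Y}_s|+|\hat{Z}_s|)$, and the second by $h^1_s+h^2_s+2\kappa(|Y^2_s|+|Z^2_s|)$ (another use of (A1) together with $|f^i(s,0,0)|\le h^i_s$). Young's inequality $|\hat{Y}_s||\hat{Z}_s|\le \epsilon|\hat{Z}_s|^2+C_\epsilon|\hat{Y}_s|^2$ prepares the $|\hat{Z}|^2$ absorption back to the left. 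The bounded-variation piece is handled by the blunt bound $\big|\int_0^T \hat{Y}_s\,d\hat{A}_s\big|\le \sup_{t\in[0,T]}|\hat{Y}_t|\,(A^1_T+A^2_T)$, which bypasses any appeal to the $G$-martingale condition on $-\int(Y^i-S)\,dA^i$.

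I then raise the resulting inequality to the power $\alpha/2$ (using $(a+b)^{\alpha/2}\le 2^{\alpha/2-1}(a^{\alpha/2}+b^{\alpha/2})$) and apply $\hat{\mathbb{E}}$. The stochastic-integral contribution is controlled by $G$-BDG (Proposition 2.3) followed by H\"older under the sublinear expectation: $\hat{\mathbb{E}}\big[\big|\int_0^T\hat{Y}_s\hat{Z}_s\,dB_s\big|^{\alpha/2}\big]\le C\big(\hat{\mathbb{E}}[\sup_t|\hat{Y}_t|^\alpha]\big)^{1/2}\big(\hat{\mathbb{E}}[(\int_0^T|\hat{Z}|^2\,ds)^{\alpha/2}]\big)^{1/2}$, whose $Z$-factor is absorbed by another Young. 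Each cross-term $\int_0^T|\hat{Y}_s|X_s\,ds$ with $X\in\{h^1,h^2,|Y^2|,|Z^2|\}$ is estimated by Cauchy--Schwarz in time plus H\"older in $\hat{\mathbb{E}}$, producing $(\hat{\mathbb{E}}[\sup_t|\hat{Y}_t|^\alpha])^{1/2}$ times one of $(\hat{\mathbb{E}}[(\int h^i\,ds)^\alpha])^{1/2}$, $(\hat{\mathbb{E}}[\sup_t|Y^i_t|^\alpha])^{1/2}$, or $(\hat{\mathbb{E}}[(\int|Z^i|^2\,ds)^{\alpha/2}])^{1/2}$. The reflection piece contributes $C(\hat{\mathbb{E}}[\sup_t|\hat{Y}_t|^\alpha])^{1/2}\sum_i(\hat{\mathbb{E}}[(A^i_T)^\alpha])^{1/2}$, while $|\hat{\xi}|^\alpha\le\sup_t|\hat{Y}_t|^\alpha$, $(\int|\hat{Y}|^2\,ds)^{\alpha/2}\le T^{\alpha/2}\sup_t|\hat{Y}_t|^\alpha$, and the Young residues $C_\epsilon|\hat{Y}_s|^2$ feed the trailing $\hat{\mathbb{E}}[\sup_t|\hat{Y}_t|^\alpha]$ term.

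Finally, Proposition 2.6 bounds both $(\hat{\mathbb{E}}[(\int_0^T|Z^i|^2\,ds)^{\alpha/2}])^{1/2}$ and $(\hat{\mathbb{E}}[(A^i_T)^\alpha])^{1/2}$ by a constant multiple of $(\hat{\mathbb{E}}[\sup_t|Y^i_t|^\alpha])^{1/2}+(\hat{\mathbb{E}}[(\int h^i\,ds)^\alpha])^{1/2}$, so that summing the two decompositions yields exactly the right-hand side of the claim. The main obstacle is the bookkeeping of the many cross-terms: $\epsilon$ in the two Young applications must be chosen small enough uniformly so that all $|\hat{Z}|^2$ contributions on the right can be absorbed into the left without destroying the constant, and H\"older under $\hat{\mathbb{E}}$ must be applied in the right order so that $(\hat{\mathbb{E}}[\sup_t|\hat{Y}_t|^\alpha])^{1/2}$ emerges as the common factor in every bilinear piece on the right.
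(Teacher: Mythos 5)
Your argument is correct and is essentially the standard proof of this estimate: the paper itself only quotes Proposition~\ref{the1.7} from \cite{LPSH}, and the proof there proceeds exactly as you do --- $G$-It\^{o}'s formula applied to $|\hat{Y}_t|^2$, the non-degeneracy bound $d\langle B\rangle_s\geq\underline{\sigma}^2\,ds$, the Lipschitz decomposition of $\hat{f}$ and $\hat{g}$, the crude bound $|\int_0^T\hat{Y}_s\,d\hat{A}_s|\leq\sup_t|\hat{Y}_t|(A^1_T+A^2_T)$, BDG plus H\"older under $\hat{\mathbb{E}}$, and finally the single-solution a priori estimates to control $\hat{\mathbb{E}}[(\int_0^T|Z^i_s|^2ds)^{\alpha/2}]$ and $\hat{\mathbb{E}}[|A^i_T|^\alpha]$. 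The only slip is a label: the a priori estimate you invoke at the end is Proposition~\ref{the1.6}, not the proposition being proved.
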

	 
	 \begin{remark}
	 	Note that in the above two propositions, we do not  need to assume $(Y,Z,A)$ and $(Y^i,Z^i,A^i)$, $i=1,2$ are solutions of reflected $G$-BSDEs.
	 \end{remark}

 \begin{proposition}[\cite{LS}]\label{the1.10}
	 	Let $(\xi^1,f^1,g^1,S)$ and $(\xi^2,f^2,g^2,S)$ be two sets of data, each one satisfying the Assumptions (A1)--(A4). Let $(Y^i,Z^i,A^i)\in\mathcal{S}_G^\alpha(0,T)$ be the solutions of the reflected $G$-BSDEs with data $(\xi^i,f^i,g^i,S)$, $i=1,2$ respectively, with $2\leq \alpha\leq \beta$. Set $\hat{Y}_t=Y^1_t-Y^2_t$, $\hat{\xi}=\xi^1-\xi^2$. Then, there exists a constant $C:=C(\alpha,T, \kappa,G)>0$ such that
	 	\begin{displaymath}
	 	|\hat{Y}_t|^\alpha\leq C\hat{\mathbb{E}}_t[|\hat{\xi}|^\alpha+\int_t^T|\hat{\lambda}_s|^\alpha ds]
			 	\end{displaymath}
	 	where $\hat{\lambda}_s=|f^1(s,Y_s^2,Z_s^2)-f^2(s,Y_s^2,Z_s^2)|+|g^1(s,Y_s^2,Z_s^2)-g^2(s,Y_s^2,Z_s^2)|.$
	 \end{proposition}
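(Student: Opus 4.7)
The plan is to apply $G$-Itô's formula to $|\hat{Y}_t|^\alpha$ (which is $C^2$ since $\alpha\geq 2$) and to exploit that both reflected $G$-BSDEs share the same lower obstacle $S$. First I subtract the two equations to obtain
\begin{displaymath}
\hat{Y}_t=\hat{\xi}+\int_t^T \Delta f(s)\,ds+\int_t^T \Delta g(s)\,d\langle B\rangle_s-\int_t^T \hat{Z}_s\,dB_s+(A^1_T-A^1_t)-(A^2_T-A^2_t),
\end{displaymath}
where $\Delta f(s):=f^1(s,Y^1_s,Z^1_s)-f^2(s,Y^2_s,Z^2_s)$ (similarly for $g$). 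By (A1), I decompose $\Delta f(s)=a_s\hat{Y}_s+b_s\hat{Z}_s+\delta^f_s$ with $|a_s|,|b_s|\leq\kappa$ and $|\delta^f_s|\leq\hat{\lambda}_s$, and analogously for $\Delta g$.

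Applying $G$-Itô to $|\hat{Y}_t|^\alpha$ produces a pathwise identity in which the good term $\tfrac{\alpha(\alpha-1)}{2}\int_t^T|\hat{Y}_s|^{\alpha-2}|\hat{Z}_s|^2\,d\langle B\rangle_s$ sits on the LHS, while the RHS contains the initial data $|\hat{\xi}|^\alpha$, generator drifts, the $dB$ stochastic integral, and the reflection correction $R_t:=\alpha\int_t^T|\hat{Y}_s|^{\alpha-2}\hat{Y}_s(dA^1_s-dA^2_s)$. The generator drifts are controlled via (A1) combined with Young's inequality: the $|\hat{Z}|$-dependent pieces are absorbed into the quadratic-variation term on the LHS (using the non-degeneracy $\underline{\sigma}^2>0$), leaving a residual of the form $C\int_t^T|\hat{Y}_s|^\alpha\,ds+C\int_t^T|\hat{\lambda}_s|^\alpha\,ds$.

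The main obstacle is handling $R_t$. Using the identity $|\hat{Y}|^{\alpha-2}\hat{Y}=(\hat{Y}^+)^{\alpha-1}-(\hat{Y}^-)^{\alpha-1}$ together with the crucial sign inequalities $\hat{Y}^+\leq Y^1-S$ and $\hat{Y}^-\leq Y^2-S$ (forced by $Y^1,Y^2\geq S$), I split $R_t$ into four pieces. The cross-terms $-\alpha\int(\hat{Y}^-)^{\alpha-1}dA^1$ and $-\alpha\int(\hat{Y}^+)^{\alpha-1}dA^2$ are nonpositive and are discarded, while the diagonal pieces are dominated by $\alpha\int_t^T(Y^i-S)^{\alpha-1}dA^i_s$ for $i=1,2$. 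Because $\{-\int_0^\cdot(Y^i-S)dA^i\}$ is a nonincreasing $G$-martingale, a $G$-stochastic-integration argument with the bounded nonnegative integrand $(Y^i-S)^{\alpha-2}$ serves as the substitute for the classical Skorohod identity $(Y-S)dA\equiv 0$, and shows that these diagonal pieces vanish (or become nonpositive) after taking conditional $G$-expectation. This is the real technical core and is where the $G$-framework genuinely differs from the classical reflected BSDE setting.

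Finally, taking $\hat{\mathbb{E}}_t$ kills the $dB$ stochastic integral (via the BDG-type bound of Proposition~\ref{BDG}), and a backward Gronwall lemma under conditional $G$-expectation applied to the resulting inequality yields
\begin{displaymath}
|\hat{Y}_t|^\alpha\leq C\hat{\mathbb{E}}_t\Big[|\hat{\xi}|^\alpha+\int_t^T|\hat{\lambda}_s|^\alpha\,ds\Big]
\end{displaymath}
with $C=C(\alpha,T,\kappa,G)$, as required.
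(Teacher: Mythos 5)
Your argument is essentially the standard one: the paper does not prove Proposition~\ref{the1.10} (it is quoted from \cite{LS}/\cite{LPSH}), but the technique you describe --- linearizing the generators via (A1), splitting $|\hat{Y}|^{\alpha-2}\hat{Y}\,d(A^1-A^2)$ into the nonpositive cross-terms and the diagonal terms dominated through $\hat{Y}^+\le Y^1-S$, $\hat{Y}^-\le Y^2-S$, and then invoking the martingale condition --- is exactly the mechanism the paper itself deploys in the Section~3 uniqueness proof, where $M_t=\int_0^t\alpha e^{rs}\bar{Y}_s^{\alpha/2-1}(\tilde{Y}_s\tilde{Z}_s\,dB_s-(\tilde{Y}_s)^+\,dA^1_s-(\tilde{Y}_s)^-\,dA^2_s)$ is shown to be a $G$-martingale. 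Two small points of hygiene: the weight $(Y^i-S)^{\alpha-2}$ is nonnegative but not bounded, so you must invoke the lemma of \cite{LPSH} asserting that $-\int_0^\cdot\eta_s(Y^i_s-S_s)\,dA^i_s$ remains a nonincreasing $G$-martingale for nonnegative $\eta$ in a suitable $S_G^p$ space (not merely for bounded $\eta$); and the references avoid a conditional Gronwall step altogether by carrying the factor $e^{rt}$ with $r$ large enough to absorb the residual $\int_t^T|\hat{Y}_s|^\alpha\,ds$ term, which is the cleaner route under sublinear conditional expectation.
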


\subsection{$G$-BSDEs with non-Lipschitz coefficients}

In this subsection, we recall some results on the $G$-BSDEs with non-Lipschitz coefficients obtained in \cite{He}, where the generators $f,g$ satisfy the following so-called $\beta$-order Mao's condition, i.e.,

\begin{itemize}
\item[(H1')] For any $y,y',z,z'\in\mathbb{R}$, $t\in[0,T]$,
$$|f(t,y,z)-f(t,y',z')|^\beta+|g(t,y,z)-g(t,y',z')|^\beta\leq \mu(|y-y'|^\beta)+L|z-z'|^\beta,$$
where $L>0$ is a constant and $\mu:\mathbb{R}^+\rightarrow \mathbb{R}^+$ is a continuous non-decreasing concave function with $\mu(0)=0$, $\mu(u)>0$ for $u>0$, such that $\int_{0+}\frac{du}{\mu(u)}=+\infty$ for some $\beta>2$.
\end{itemize}

 Then, we obtain the following existence and uniqueness result as well as the comparison theorem.
\begin{theorem}[\cite{He}]\label{thm3.1}
Assume that $\xi\in L_G^\beta(\Omega_T)$ and $f,g$ satisfy (H1') and (A2) for some $\beta>2$. Then, the $G$-BSDE with parameters $(\xi,f,g)$ has a unique solution $(Y,Z,K)\in \mathfrak{S}^\alpha_G(0,T)$ for any $2\leq \alpha<\beta$.
\end{theorem}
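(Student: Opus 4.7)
The plan is to adapt Mao's classical scheme (Picard iteration plus Bihari's inequality) to the $G$-framework, using the a priori estimates of \cite{HJPS1,HJPS2} for Lipschitz $G$-BSDEs at each step. The key observation is that under (H1') the drivers $f,g$ remain Lipschitz in $z$ (with constant $L^{1/\beta}$), so only the $y$-variable needs to be handled by a non-Gronwall argument.

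For uniqueness, let $(Y^i,Z^i,K^i)\in\mathfrak{S}^\alpha_G(0,T)$, $i=1,2$, be two solutions. Viewing the difference as a $G$-BSDE with zero terminal value and a driver that is Lipschitz in $z$, the non-reflected counterpart of Proposition \ref{the1.10}, established in \cite{HJPS1,HJPS2}, yields
\[
|Y^1_t-Y^2_t|^{\beta}\le C\,\hat{\mathbb{E}}_t\!\left[\int_t^T\!\bigl(|f(s,Y^1_s,Z^1_s)-f(s,Y^2_s,Z^1_s)|^{\beta}+|g(s,Y^1_s,Z^1_s)-g(s,Y^2_s,Z^1_s)|^{\beta}\bigr)\d s\right].
\]
Applying (H1') and then the concavity of $\mu$ via Jensen's inequality (which holds for $\hat{\mathbb{E}}$ through the representation of Theorem \ref{the1.1}), the function $\phi(t):=\hat{\mathbb{E}}[\sup_{s\in[t,T]}|Y^1_s-Y^2_s|^{\beta}]$ satisfies $\phi(t)\le C\int_t^T\mu(\phi(s))\,\d s$. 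Bihari's inequality together with $\int_{0+}\d u/\mu(u)=+\infty$ forces $\phi\equiv 0$, and the uniqueness of $Z$ and $K$ then follows from a Proposition \ref{the1.6}-type bound.

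For existence I would iterate by freezing $y$: set $(Y^0,Z^0,K^0)\equiv 0$ and, given $(Y^n,Z^n,K^n)$, define $(Y^{n+1},Z^{n+1},K^{n+1})$ as the unique solution of the Lipschitz $G$-BSDE with terminal value $\xi$ and driver $(s,z)\mapsto(f(s,Y^n_s,z),g(s,Y^n_s,z))$, which exists by \cite{HJPS1}. The growth control $|f(s,y,0)|\le|f(s,0,0)|+\mu(|y|^{\beta})^{1/\beta}$ coming from (H1') yields a uniform-in-$n$ estimate in $\mathfrak{S}^\alpha_G(0,T)$ for every $\alpha<\beta$. Applying the same a priori estimate to the difference of two consecutive iterates and setting $\phi_n(t):=\hat{\mathbb{E}}[\sup_{s\in[t,T]}|Y^{n+1}_s-Y^n_s|^{\beta}]$ gives the recursion $\phi_n(t)\le C\int_t^T\mu(\phi_{n-1}(s))\,\d s$. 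A standard iteration-of-Bihari argument (pass to $\overline\phi(t):=\limsup_n\phi_n(t)$, deduce $\overline\phi(t)\le C\int_t^T\mu(\overline\phi(s))\,\d s$ by reverse Fatou and continuity of $\mu$, then conclude $\overline\phi\equiv 0$ from $\int_{0+}\d u/\mu(u)=+\infty$) shows $\phi_n\to 0$ uniformly, so $\{Y^n\}$ is Cauchy in $S^\beta_G(0,T)$; a Proposition \ref{the1.6}-type bound transfers this to Cauchyness of $\{Z^n\}$ in $H^\alpha_G$ and of $\{K^n\}$ in $S^\alpha_G$ for every $\alpha<\beta$, and the limit solves the equation.

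The main obstacle is that the nonincreasing $G$-martingale $K$ is not a simple functional of $(Y,Z)$, so no direct contraction on the generator is available: every estimate must instead be extracted via $G$-It\^{o}'s formula applied to $|Y^1_t-Y^2_t|^{\beta}$, with careful handling of the $\d\langle B\rangle_s$ and $\d K_s$ terms. This is precisely why the Mao modulus is imposed at order $\beta>2$ rather than order $1$: the exponent $\beta$ is needed both to absorb the quadratic variation term and to make the resulting scalar estimate compatible with the concavity of $\mu$. A secondary technical point is the downgrade from $\beta$-power estimates on $Y$ to $\alpha$-power estimates on $(Z,K)$ for $\alpha<\beta$, which relies on the $G$-evaluation inequality (Theorem \ref{the1.2}) and must be invoked at every iteration step.
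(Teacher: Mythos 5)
First, note that the paper does not prove this statement: Theorem \ref{thm3.1} is quoted from \cite{He}, and the present paper only reproduces the relevant techniques when it adapts them to the reflected case in Sections 3 and 4 (it explicitly defers to Lemmas 3.3 and 3.4 and Step 3 of Theorem 3.1 in \cite{He}). Measured against that blueprint, your overall strategy is the intended one: Picard iteration freezing the $y$-variable, Lipschitz-in-$z$ a priori estimates from \cite{HJPS1,HJPS2}, uniform $\beta$-moment bounds, and a Jensen-plus-backward-Bihari argument run at order exactly $\beta$ (which is the right place to run it: the paper's remark at the end of Section 3 points out that at order $\alpha<\beta$ the transformed modulus $\mu^{\alpha/\beta}(x^{\beta/\alpha})$ need not be concave, so Lemmas \ref{lem2.14} and \ref{lem2.16} would not apply).

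There are, however, two concrete gaps. First, you define $\phi(t)=\hat{\mathbb{E}}[\sup_{s\in[t,T]}|Y^1_s-Y^2_s|^{\beta}]$ (and likewise $\phi_n$) with the supremum \emph{inside} the expectation at the critical order $\beta$. Solutions are only known to lie in $\mathfrak{S}^\alpha_G(0,T)$ for $\alpha<\beta$, so this quantity is not known to be finite; moreover the a priori estimates give only the pointwise bound $|\hat{Y}_t|^{\beta}\leq C\hat{\mathbb{E}}_t[\cdots]$, and pushing the supremum inside via Theorem \ref{the1.2} forces a strict drop of exponent to some $\alpha<\beta$, which reintroduces exactly the concavity obstruction you were avoiding. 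The correct functional is $\sup_{s\in[t,T]}\hat{\mathbb{E}}[|\hat{Y}_s|^{\beta}]$ (supremum outside), i.e.\ the norm $\|\cdot\|_{\bar{S}_G^{\beta}}$ used in Lemma \ref{lem3.4}; one first closes the Bihari loop there, and only afterwards upgrades to $\hat{\mathbb{E}}[\sup_t|\cdot|^{\alpha}]$ for $\alpha<\beta$ via Theorem \ref{the1.2} combined with Lemma \ref{lem2.14}, as in Step 2 of the paper's Picard-iteration existence proof. Second, your existence argument controls only consecutive differences: $\phi_n(t)\leq C\int_t^T\mu(\phi_{n-1}(s))\,\d s$ and $\limsup_n\phi_n=0$ do \emph{not} imply that $\{Y^n\}$ is Cauchy (terms tending to zero do not sum). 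The Bihari recursion must be run on the doubly indexed differences $Y^{n+m}-Y^n$, with $u_{n,m}(t)\leq C\int_t^T\mu(u_{n-1,m}(s))\,\d s$ and $v(t)=\limsup_{n,m\to\infty}u_{n,m}(t)$, exactly as in Lemma \ref{lem3.4} (and Lemma \ref{Conv-Y-Z-A} for the penalization route). Both repairs are standard and are precisely what \cite{He} and this paper do, but as written your argument does not close.
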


\begin{theorem}[\cite{He}]\label{thm3.7}
Let $(Y^i,Z^i,K^i)\in \mathfrak{S}_G^\alpha(0,T)$, $i=1,2$, for some $2\leq \alpha<\beta$ be the solutions of the following $G$-BSDEs
$$Y^i_t=\xi^i+\int_t^T f^i(s,Y_s^i,Z_s^i)ds+\int_t^T g^i(s,Y_s^i,Z_s^i)d\langle B\rangle_s-\int_t^T Z_s^i dB_s-(K^i_T-K^i_t),$$
where $\xi^i\in L_G^\beta(\Omega_T)$, $f^i,g^i$ satisfy (H1') and (A2) for $\beta>2$. If $\xi^1\leq \xi^2$, $f^1\leq f^2$, $g^1\leq g^2$, then $Y^1_t\leq Y^2_t$.
\end{theorem}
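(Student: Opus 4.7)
My plan is to reduce Theorem~\ref{thm3.7} to the comparison theorem for Lipschitz $G$-BSDEs (Theorem~5.1 of \cite{HJPS2}) by approximating the generators $f^i, g^i$ with Lipschitz functions in a way that preserves the pointwise ordering $f^1 \le f^2$ and $g^1 \le g^2$. At each approximation level the Lipschitz comparison yields the desired inequality for the approximate solutions, and the conclusion then follows by passing to the limit via the $\beta$-order Mao stability estimate already developed in the wellposedness proof of Theorem~\ref{thm3.1}.

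\textbf{Lipschitz approximation and comparison.} For each integer $n\ge 1$ and $i=1,2$, I would define the inf-convolutions in the $y$-variable
\[
f^{i,n}(t,y,z) := \inf_{y'\in\mathbb{Q}} \bigl\{f^i(t,y',z) + n|y-y'|\bigr\},
\]
and similarly $g^{i,n}$. Under (H1'), $f^i(t,\cdot,z)$ is uniformly continuous, so $f^{i,n}$ is finite, $n$-Lipschitz in $y$, remains Lipschitz in $z$ with the $z$-constant of (H1'), still satisfies (A2), and increases pointwise to $f^i$ as $n\to\infty$. Crucially, the inf-convolution preserves ordering: $f^{1,n}\le f^{2,n}$ and $g^{1,n}\le g^{2,n}$ everywhere. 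The Lipschitz $G$-BSDE with parameters $(\xi^i,f^{i,n},g^{i,n})$ therefore admits a unique solution $(Y^{i,n},Z^{i,n},K^{i,n})\in\mathfrak{S}_G^\alpha(0,T)$, and the Lipschitz comparison of \cite{HJPS2} gives $Y^{1,n}_t \le Y^{2,n}_t$ q.s.

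\textbf{Passing to the limit.} I would then establish a stability estimate of the form
\[
\hat{\mathbb{E}}\Bigl[\sup_{t\in[0,T]}|Y^{i,n}_t - Y^i_t|^\alpha\Bigr] \le C\, \hat{\mathbb{E}}\Bigl[\Bigl(\int_0^T \bigl(|f^{i,n}-f^i|(s,Y^i_s,Z^i_s) + |g^{i,n}-g^i|(s,Y^i_s,Z^i_s)\bigr)ds\Bigr)^\alpha\Bigr],
\]
which follows by applying $G$-It\^{o} to $|Y^{i,n}_t - Y^i_t|^\alpha$, splitting the drift as in Proposition~\ref{the1.7}, invoking (H1') and the concavity of $\mu$, and closing via Bihari's inequality exactly as in the uniqueness argument for Theorem~\ref{thm3.1}. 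Since $f^{i,n}(s,Y^i_s,Z^i_s)\uparrow f^i(s,Y^i_s,Z^i_s)$ q.s.\ and both are dominated in $L^\beta$, dominated convergence forces the right-hand side to zero, so $Y^{i,n}\to Y^i$ in $S_G^\alpha$. Letting $n\to\infty$ in $Y^{1,n}_t\le Y^{2,n}_t$ yields $Y^1_t\le Y^2_t$, q.s.

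\textbf{Main obstacle.} The delicate step is the stability estimate in the third paragraph, because Mao's condition in $y$ is genuinely non-Lipschitz; it must be handled with Bihari, carefully combining $G$-It\^{o}, the concavity of $\mu$, Jensen's inequality under the sublinear $\hat{\mathbb{E}}$, and the cross-terms produced by the nonincreasing $G$-martingales $K^{i,n}$ and $K^i$. I would specifically avoid a direct Itô expansion of $((Y^1-Y^2)^+)^\beta$: that route generates a term $-\int \beta((\hat Y_s)^+)^{\beta-1}\mathbf{1}_{\{\hat Y_s>0\}}\,dK^1_s$ which has the wrong sign and cannot be absorbed, because the difference $K^1-K^2$ of two nonincreasing $G$-martingales is not sign-definite. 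The virtue of the Lipschitz-approximation plan is precisely that it delegates all sign-control to the Lipschitz comparison theorem and leaves only a stability estimate between solutions with a common structural form, where the Bihari machinery closes cleanly.
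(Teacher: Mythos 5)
First, a point of reference: the paper does not prove Theorem~\ref{thm3.7} at all --- it is imported verbatim from \cite{He} --- so there is no in-paper argument to compare against. Your strategy (replace $f^i,g^i$ by order-preserving Lipschitz inf-convolutions in $y$, invoke the Lipschitz comparison theorem of \cite{HJPS2} at each level $n$, then pass to the limit via a stability estimate) is the standard route for extending comparison beyond the Lipschitz class, and it is viable here: (H1') forces the uniform modulus $\rho_0(u)=\mu(u^\beta)^{1/\beta}\le a+bu$ in $y$, so the inf-convolutions are finite, converge uniformly, preserve the ordering, and inherit the modulus $\min(\rho_0(u),nu)\le\rho_0(u)$ \emph{uniformly in $n$} --- which is exactly what makes the constants in the limit passage $n$-independent. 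One point you should not skip in the $G$-setting is that the countable infimum over $y'\in\mathbb{Q}$ must be shown to define an element of $M_G^\beta(0,T)$ (realize it as a uniform, decreasing limit of finite minima of elements of $M_G^\beta(0,T)$); membership in these completion spaces is not automatic for measurable envelopes.

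Two steps need repair. First, the displayed stability inequality is false as written: under Mao's condition the solution does not depend linearly (or Lipschitz-continuously) on a perturbation of the generator, and Bihari's inequality with nonzero initial data only yields a modulus of the form $G^{-1}(G(\varepsilon_n)+CT)$ with $G'(x)=1/\mu(x)$. What you can prove, and all you need, is that $u_n(t):=\sup_{r\in[t,T]}\hat{\mathbb{E}}[|Y^{i,n}_r-Y^i_r|^\beta]$ satisfies $u_n(t)\le C\varepsilon_n+C\int_t^T\mu(u_n(s))\,ds$ with $\varepsilon_n\to0$, so that $v:=\limsup_n u_n$ obeys $v(t)\le C\int_t^T\mu(v(s))\,ds$ and Lemma~\ref{lem2.16} forces $v\equiv0$ (this is precisely the scheme of Lemma~\ref{Conv-Y-Z-A}). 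Second, and more importantly, this Bihari step must be run at the exponent $\beta$, not at $\alpha<\beta$: the unnumbered Remark following the uniqueness proof of Theorem~\ref{main} points out that under (H1') the function $\tilde\mu(x)=\mu^{\alpha/\beta}(x^{\beta/\alpha})$ need not be concave when $\alpha<\beta$, so Jensen's inequality (Lemma~\ref{lem2.14}) is unavailable at level $\alpha$; your phrase ``concavity of $\mu$'' only makes sense if you work with $\sup_t\hat{\mathbb{E}}[|\cdot|^\beta]$, which is finite by the analogue of Lemma~\ref{lem3.3}. Note that pointwise convergence $\hat{\mathbb{E}}[|Y^{i,n}_t-Y^i_t|^\beta]\to0$ for each $t$ already suffices to pass $Y^{1,n}_t\le Y^{2,n}_t$ to the limit quasi-surely, and path continuity upgrades this to all $t$ simultaneously, so the $\sup_{t}$ inside the expectation in your estimate is not needed. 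With these corrections the argument closes.
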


	
	\section{Reflected $G$-BSDEs with non-Lipschitz coefficients: the uniqueness result}

In this section, we consider the reflected $G$-BSDEs with non-Lipschitz coefficients. More specifically, we assume that $f,g$ satisfy the following condition:
\begin{itemize}
\item[(H1)] For any $y,y',z,z'\in\mathbb{R}$, $t\in[0,T]$,
$$|f(t,y,z)-f(t,y',z')|+|g(t,y,z)-g(t,y',z')|\leq \rho(|y-y'|)+L|z-z'|,$$
where $L>0$ is a constant and $\rho:\mathbb{R}^+\rightarrow \mathbb{R}^+$ is a continuous non-decreasing concave function with $\rho(0)=0$, $\rho(u)>0$ for $u>0$, such that $\int_{0+}\frac{du}{\rho^\beta(u^{1/\beta})}=+\infty$ for some $\beta>2$.
\end{itemize}	
	
	Now, we state the main result in this paper.
	
	\begin{theorem}\label{main}
	Assume that $(\xi,f,g,S)$ satisfy (H1), (A2)-(A4) for some $\beta>2$. Then the reflected $G$-BSDE with parameters $(\xi,f,g,S)$ has a unique solution $(Y,Z,A)\in \mathcal{S}_G^\alpha(0,T)$ for any $2\leq \alpha<\beta$.
	\end{theorem}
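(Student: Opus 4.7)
The plan is to prove uniqueness by a Bihari-type stability inequality, and then obtain existence via a Picard iteration in which the non-Lipschitz $y$-dependence is frozen at each step (penalization being the separate construction of Section~5). For \emph{uniqueness}, let $(Y^i,Z^i,A^i)$, $i=1,2$, be two solutions in $\mathcal{S}_G^\alpha(0,T)$ with the same data $(\xi,f,g,S)$, and set $\hat Y=Y^1-Y^2$, $\hat Z=Z^1-Z^2$, $\hat A=A^1-A^2$. The first step is the $G$-It\^{o} formula applied to $|\hat Y_t|^\beta$. The contribution $\int_t^T|\hat Y_s|^{\beta-2}\hat Y_s\,d\hat A_s$ is decomposed using $Y^1_s-Y^2_s=(Y^1_s-S_s)-(Y^2_s-S_s)$ and handled by the $G$-martingale condition (c) together with $Y^i_s-S_s\geq 0$ and $A^i$ nondecreasing. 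The generator differences are bounded by (H1) as $\rho(|\hat Y_s|)+L|\hat Z_s|$, and the $\hat Z$-term is absorbed into the quadratic form $\beta(\beta-1)|\hat Y_s|^{\beta-2}|\hat Z_s|^2\,d\langle B\rangle_s$ via Young's inequality combined with non-degeneracy $\underline{\sigma}^2>0$. Taking conditional $\hat{\mathbb{E}}_t$ and integrating yields
\begin{displaymath}
\hat{\mathbb{E}}\bigl[|\hat Y_t|^\beta\bigr]\leq C\int_t^T\hat{\mathbb{E}}\bigl[\rho^\beta(|\hat Y_s|)\bigr]\,ds.
\end{displaymath}
Setting $\phi(t):=\hat{\mathbb{E}}[|\hat Y_t|^\beta]$ and $\tilde\rho(u):=\rho^\beta(u^{1/\beta})$, Jensen's inequality (using concavity of $\tilde\rho$) gives $\phi(t)\leq C\int_t^T\tilde\rho(\phi(s))\,ds$. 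The classical Bihari lemma and the hypothesis $\int_{0+}du/\tilde\rho(u)=+\infty$ then force $\phi\equiv 0$, and Proposition \ref{the1.6} delivers $Z^1=Z^2$ and $A^1=A^2$.

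For \emph{existence}, set $(Y^0,Z^0,A^0)\equiv 0$ and define recursively $(Y^{n+1},Z^{n+1},A^{n+1})$ as the unique solution (via Theorem \ref{the1.14}) of the reflected $G$-BSDE with coefficients $f_n(s,z):=f(s,Y^n_s,z)$ and $g_n(s,z):=g(s,Y^n_s,z)$, which are Lipschitz in $(y,z)$ with constant $L$ and whose null values lie in $M_G^\beta(0,T)$ by (A2), (H1) and the at-most-linear growth of the concave $\rho$; the a priori bound of Theorem \ref{the1.14} produces, by induction, a uniform control of $\|(Y^n,Z^n,A^n)\|_{\mathcal{S}_G^\alpha}$. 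To show convergence I would apply the stability estimate of Proposition \ref{the1.10} to the systems with parameters $(\xi,f_{n-1},g_{n-1},S)$ and $(\xi,f_n,g_n,S)$: the perturbation $\hat\lambda_s$ there is pointwise dominated by $\rho(|Y^n_s-Y^{n-1}_s|)$, so that
\begin{displaymath}
|Y^{n+1}_t-Y^n_t|^\alpha\leq C\,\hat{\mathbb{E}}_t\left[\int_t^T\rho^\alpha(|Y^n_s-Y^{n-1}_s|)\,ds\right].
\end{displaymath}
Writing $\varphi_n(t):=\hat{\mathbb{E}}[\sup_{s\in[t,T]}|Y^{n+1}_s-Y^n_s|^\alpha]$, Jensen's inequality together with a Bihari-type iteration based on $\int_{0+}du/\rho^\beta(u^{1/\beta})=+\infty$ (applied at exponent $\alpha<\beta$) forces $\varphi_n(0)\to 0$. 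Propositions \ref{the1.6}--\ref{the1.7} then promote the Cauchy property to $(Z^n)$ in $H_G^\alpha(0,T)$ and $(A^n)$ in $S_G^\alpha(0,T)$, and passage to the limit in the prelimit equation identifies the limit as a solution of the reflected $G$-BSDE.

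The principal obstacle is the uniqueness estimate, specifically the control of $\int_t^T|\hat Y_s|^{\beta-2}\hat Y_s\,d\hat A_s$ while simultaneously keeping the $\hat Z$-terms absorbable into the quadratic variation: the $G$-martingale condition (c) has to be exploited in a form compatible with the $\beta$-th-order expansion rather than the usual $2$nd-order one, and the Young-type splitting must be tuned precisely so that the $|\hat Y_s|^{\beta-2}|\hat Z_s|^2$ term dominates the $L|\hat Y_s|^{\beta-1}|\hat Z_s|$ contribution. A second delicate point is verifying that the tailored integrability condition $\int_{0+}du/\rho^\beta(u^{1/\beta})=+\infty$ is exactly strong enough to close both the Bihari inequality in uniqueness and the iterative comparison in existence, which requires carefully tracking the interplay between the $\alpha$-norm used in the solution space and the $\beta$-exponent entering the hypothesis on $\rho$.
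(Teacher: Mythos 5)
Your overall strategy coincides with the paper's: uniqueness via a $G$-It\^{o} expansion, Jensen's inequality for the concave modulus, and a backward Bihari lemma; existence via Picard iteration with the $y$-argument frozen at the previous iterate. However, two steps as you have written them do not go through.

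First, in the uniqueness argument you expand $|\hat Y_t|^\beta$ and run Bihari on $\phi(t)=\hat{\mathbb{E}}[|\hat Y_t|^\beta]$. A solution is only known to lie in $\mathcal{S}_G^\alpha(0,T)$ for some (or all) $2\leq\alpha<\beta$, so $\hat{\mathbb{E}}[|\hat Y_t|^\beta]$ need not be finite, and neither are the cross terms $|\hat Y|^{\beta-1}|\hat Z|$ appearing in the order-$\beta$ expansion; the Bihari inequality is then vacuous. The paper instead expands at order $\alpha<\beta$, obtains $|\tilde Y_t|^\alpha\leq C\hat{\mathbb{E}}_t[\int_t^T\tilde\rho(|\tilde Y_s|^\alpha)ds]$ with $\tilde\rho(x)=\rho^\alpha(x^{1/\alpha})$, and then uses Lemma \ref{lem4} twice: once to get concavity of $\tilde\rho$ (so Jensen, Lemma \ref{lem2.14}, applies) and once, with exponent $r=\alpha/\beta<1$, to transfer the hypothesis $\int_{0+}du/\rho^\beta(u^{1/\beta})=+\infty$ into $\int_{0+}du/\tilde\rho(u)=+\infty$. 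This transfer is exactly the point you flag as ``delicate'' but do not carry out, and it is why the condition in (H1) is phrased at level $\beta$ while the estimate is run at level $\alpha$. Relatedly, Proposition \ref{the1.6} (stated under the Lipschitz assumption (A1)) cannot be invoked to conclude $Z^1=Z^2$ and $A^1=A^2$; the paper reproves the $\hat Z$-estimate directly from the property of $\rho$ and uses its non-Lipschitz variant, Proposition \ref{the1.6'}, to bound $\hat{\mathbb{E}}[|A_T^i|^\alpha]$.

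Second, in the existence argument your convergence criterion is the consecutive-difference quantity $\varphi_n(t)=\hat{\mathbb{E}}[\sup_{s\in[t,T]}|Y^{n+1}_s-Y^n_s|^\alpha]$ and the conclusion $\varphi_n(0)\to0$. Under a merely concave, non-Lipschitz modulus this does not imply that $\{Y^n\}$ is Cauchy (there is no summability of $\varphi_n^{1/\alpha}$), so the limit process is not produced. The paper's Lemma \ref{lem3.4} establishes the genuine two-index statement $\lim_{n,m\to\infty}\sup_{t}\hat{\mathbb{E}}[|Y^{n+m}_t-Y^n_t|^\beta]=0$, obtained by applying the stability estimate to $Y^{n+m}-Y^n$, passing to $v(t)=\limsup_{n,m}\sup_{r\in[t,T]}\hat{\mathbb{E}}[|Y^{n+m}_r-Y^n_r|^\beta]$, and closing a Bihari inequality for $v$. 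A further technical point you gloss over: since $Y^{n}$ is only in $S_G^{\gamma}$ for $\gamma<\beta$, the frozen coefficient $f(\cdot,\cdot,Y^{n}_\cdot,z)$ is not in $M_G^\beta(0,T)$ as you claim; the paper tracks the degrading integrability exponent $\beta-(1-2^{-n})\varepsilon$ via Lemma \ref{lem2.15} so that every iterate stays in $S_G^{\beta-\varepsilon}$ for arbitrarily small $\varepsilon$.
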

	
	In this section, we mainly focus on the uniqueness result. To this end, we first list the following technical lemmas. In fact, Lemma \ref{lem2.14} is the Jensen inequality and Lemma \ref{lem2.16} is the backward Bihari inequality.
	
	\begin{lemma}[\cite{BL}]\label{lem2.14}
	Let $h(\cdot):\mathbb{R}\rightarrow\mathbb{R}$ be a continuous, nondecreasing and concave function. Then, for each $X\in L_G^1(\Omega_T)$ sucu that $h(X)\in L_G^1(\Omega_T)$, the following inequality holds:
		$$\hat{\mathbb{E}}[h(X)]\leq h(\hat{\mathbb{E}}[X]).$$
	\end{lemma}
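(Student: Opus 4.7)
The plan is to reduce the statement to the classical Jensen inequality applied pathwise (measurewise) under each $P\in\mathcal{P}$, then combine with the monotonicity of $h$ to close the supremum. The representation theorem (Theorem \ref{the1.1}) will be the main engine: it lets us replace $\hat{\mathbb{E}}$ by a supremum of ordinary linear expectations $E_P$, $P\in\mathcal{P}$.

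First I would note that for every $P\in\mathcal{P}$, the assumption $X,h(X)\in L_G^1(\Omega_T)$ together with the representation $\hat{\mathbb{E}}[|X|]=\sup_{P\in\mathcal{P}}E_P[|X|]<\infty$ (and similarly for $|h(X)|$) guarantees that the classical expectations $E_P[X]$ and $E_P[h(X)]$ are well-defined and finite. Next, for each such $P$ the ordinary Jensen inequality for a continuous concave function $h$ yields
\begin{equation*}
E_P[h(X)]\leq h\bigl(E_P[X]\bigr).
\end{equation*}
Now I invoke the monotonicity of $h$: since $E_P[X]\leq \sup_{Q\in\mathcal{P}}E_Q[X]=\hat{\mathbb{E}}[X]$, we get $h(E_P[X])\leq h(\hat{\mathbb{E}}[X])$. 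Combining these two bounds and taking supremum over $P\in\mathcal{P}$ on the left, I obtain
\begin{equation*}
\hat{\mathbb{E}}[h(X)]=\sup_{P\in\mathcal{P}}E_P[h(X)]\leq \sup_{P\in\mathcal{P}}h\bigl(E_P[X]\bigr)\leq h\bigl(\hat{\mathbb{E}}[X]\bigr),
\end{equation*}
which is the claim.

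The argument has essentially no hard step: concavity is used only through the classical (measure-wise) Jensen inequality, and monotonicity of $h$ is what lets us pull the supremum through $h$ from the right. The only subtlety worth spelling out is the integrability check that makes the classical Jensen applicable under each $P\in\mathcal{P}$, which is immediate from the definition of $L_G^1(\Omega_T)$ via the representation $\hat{\mathbb{E}}[|\cdot|]=\sup_{P\in\mathcal{P}}E_P[|\cdot|]$. Continuity of $h$ is not strictly needed for the inequality, but ensures $h(X)$ is a bona fide random variable so that $h(X)\in L_G^1(\Omega_T)$ makes sense.
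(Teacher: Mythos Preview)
Your argument is correct: applying the classical Jensen inequality under each $P\in\mathcal{P}$ via the representation theorem, then using the monotonicity of $h$ to pass the supremum inside, is exactly the standard route to this $G$-Jensen inequality. The paper does not give its own proof of this lemma---it simply cites the result from \cite{BL}---so there is nothing to compare; your proof is the natural one and would serve as a complete justification.
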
  
	
	\begin{lemma}[\cite{FJ}]\label{lem4}
	Let $h(\cdot)$ be a nondecreasing and concave function on $\mathbb{R}^+$ with $h(0)=0$. Then we have for any $r>1$, $h^r(x^{\frac{1}{r}})$ is also a nondecreasing and concave function on $\mathbb{R}^+$. Moreover, if $h(u)>0$ for $u>0$ and $\int_{0+}\frac{du}{h(u)}=\infty$, then for any $0<r<1$, we have
	$$\int_{0+}\frac{du}{h^r(u^{\frac{1}{r}})}=\infty.$$
	\end{lemma}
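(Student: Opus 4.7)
The plan is to treat the two assertions of the lemma separately. The first is the purely geometric claim that $g(x) := h^r(x^{1/r})$ is nondecreasing and concave when $r>1$; the second is the integral divergence when $0 < r < 1$.

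For the first assertion, monotonicity of $g$ is immediate, since each of $x \mapsto x^{1/r}$, $h$, and $t \mapsto t^r$ is nondecreasing on $\mathbb{R}^+$. For concavity, I would first treat the smooth case by direct differentiation. Writing $u = x^{1/r}$, a short computation gives
$$g'(x) = \left(\frac{h(u)}{u}\right)^{r-1} h'(u).$$
Now I would observe three things. First, since $h$ is concave with $h(0) = 0$, its chord-slope from the origin $u \mapsto h(u)/u$ is nonincreasing. Second, since $h$ is concave, $h'$ is nonincreasing. Third, since $r - 1 > 0$, raising the nonnegative nonincreasing quantity $h(u)/u$ to the power $r - 1$ preserves nonincreasingness. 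Therefore $g'(x)$ is nonincreasing in $u$, and because $u = x^{1/r}$ is increasing in $x$, it is nonincreasing in $x$; this is equivalent to concavity of $g$. The non-smooth case would be handled by approximating $h$ from below by smooth, nondecreasing, concave functions $h_n$ with $h_n(0) = 0$ (e.g., mollify on a bounded interval and adjust so that the origin value is preserved); each $g_n(x) := h_n^r(x^{1/r})$ is concave by the smooth case, and pointwise limits of concave functions are concave.

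For the second assertion, the substitution $v = u^{1/r}$ (so $du = r v^{r-1}\,dv$) transforms the integral in question into
$$\int_{0+} \frac{du}{h^r(u^{1/r})} \;=\; r \int_{0+} \frac{v^{r-1}}{h(v)^r}\,dv \;=\; r \int_{0+} \left(\frac{v}{h(v)}\right)^{r-1}\frac{dv}{h(v)}.$$
The key observation is that by concavity of $h$ with $h(0) = 0$ and the assumption $h(u) > 0$ for $u > 0$, the ratio $v \mapsto v/h(v)$ is nondecreasing, hence bounded above by some finite $M > 0$ on a neighborhood $(0, v_0]$. Since $r - 1 < 0$, this upper bound gives a uniform \emph{positive} lower bound $(v/h(v))^{r-1} \geq M^{r-1}$ on $(0, v_0]$, so
$$\int_{0+} \frac{du}{h^r(u^{1/r})} \;\geq\; r M^{r-1} \int_{0+} \frac{dv}{h(v)} \;=\; +\infty,$$
as required.

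I expect the only mildly delicate point to be the rigorous justification of concavity in the first part when $h$ is not assumed differentiable; the derivative identity above is really just a heuristic guide, and the honest argument proceeds via mollification to reduce to the smooth case. The monotonicity of chord-slopes and of the one-sided derivatives for concave functions are the only facts needed beyond standard calculus, and the second part is essentially a one-line change of variables combined with the concavity-driven bound on $v/h(v)$.
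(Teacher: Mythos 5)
The paper does not prove this lemma at all: it is imported verbatim from the reference [FJ] (Fan and Jiang, 2019), so there is no in-paper argument to compare against, and your proposal has to stand on its own. It essentially does. The second assertion is correct and complete as written: the substitution $v=u^{1/r}$, the monotonicity of $v\mapsto v/h(v)$ coming from the chord-slope property of a concave function vanishing at the origin, and the sign of $r-1$ combine exactly as you say to give the uniform positive lower bound $(v/h(v))^{r-1}\geq M^{r-1}$ near $0$ and hence divergence. The first assertion is also correct, and your derivative identity $g'(x)=(h(u)/u)^{r-1}h'(u)$ with $u=x^{1/r}$ is the right engine: both factors are nonnegative and nonincreasing in $u$, so their product is nonincreasing. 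The only place needing care is the reduction to the smooth case. Mollification near the boundary point $0$ is awkward (one must extend $h$ to the left of $0$, and $h$ may have infinite right derivative there, as for $h(u)=\sqrt{u}$), and the shift $h_n(x)=h(x+1/n)-h(1/n)$ restores neither smoothness nor, in general, the value at $0$ if $h$ jumps there. A cleaner route that avoids approximation entirely: the right derivative $h'_+$ of a concave function exists and is nonincreasing on $(0,\infty)$, the chain rule for one-sided derivatives applies since $x\mapsto x^{1/r}$ and $t\mapsto t^r$ are genuinely differentiable, so $g'_+(x)=(h(u)/u)^{r-1}h'_+(u)$ is nonincreasing and $g$ is concave on the open half-line; concavity across the endpoint then follows directly from $g(\lambda x)=h(\lambda^{1/r}x^{1/r})^r\geq\lambda\,g(x)$, using $h(\mu u)\geq\mu h(u)$ for $\mu\in[0,1]$. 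With that substitution your argument is complete; as it stands, the mollification step is a sketch rather than a gap in the underlying idea.
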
  
	
	\begin{lemma}[\cite{M}]\label{lem2.16}
	Let $h(\cdot)$ be a continuous and nondecreasing function on $\mathbb{R}^+$ with $h(0)=0$ and $\int_{0+}\frac{du}{h(u)}=\infty$. Suppose that $u:[0,T]\rightarrow \mathbb{R}^+$ is a continuous function such that for any $t\in[0,T]$
	$$u_t\leq u_0+C\int_t^T h(u_s)ds,$$
	where $C$ is a nonnegative constant. If $u_0=0$, the $u_t=0$ for all $t\in[0,T]$.
	\end{lemma}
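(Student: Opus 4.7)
The plan is to derive a contradiction by constructing a $C^1$ majorant of $u$ and then exploiting the divergence hypothesis $\int_{0+} dw/h(w) = \infty$ via the standard Bihari change-of-variable trick, adapted to the backward direction.

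First, I would define $\phi(t) := u_0 + C\int_t^T h(u_s)\,ds = C\int_t^T h(u_s)\,ds$ (using $u_0 = 0$). By the hypothesis of the lemma $u_t \leq \phi(t)$ on $[0,T]$. Because $h \ge 0$ on $\mathbb{R}^+$ and $u$ is continuous, $\phi$ is continuously differentiable, nonnegative, nonincreasing, and satisfies $\phi(T) = 0$. Moreover, monotonicity of $h$ together with $u_t \leq \phi(t)$ upgrades the relation into the backward differential inequality
$$\phi'(t) = -C h(u_t) \geq -C h(\phi(t)).$$

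Next, I would introduce the antiderivative $F(w) := \int_c^w \frac{ds}{h(s)}$ for a fixed $c > 0$. The hypothesis $\int_{0+} dw/h(w) = \infty$ implies $F(w) \to -\infty$ as $w \to 0^+$. On any subinterval where $\phi(t) > 0$, the chain rule yields $\frac{d}{dt} F(\phi(t)) = \phi'(t)/h(\phi(t)) \geq -C$. The key step is to show $\phi \equiv 0$; combined with $0 \leq u_t \leq \phi(t)$ this would then finish the proof. Suppose for contradiction that $\phi(0) > 0$. Because $\phi$ is continuous, nonnegative, nonincreasing, and vanishes at $T$, there exists $s^* \in (0, T]$ such that $\phi > 0$ on $[0, s^*)$ and $\phi(s^*) = 0$. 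Integrating the inequality $\frac{d}{dt}F(\phi(t)) \geq -C$ over $[0, r]$ for any $r \in (0, s^*)$ gives
$$F(\phi(r)) \geq F(\phi(0)) - Cr \geq F(\phi(0)) - CT.$$
Letting $r \uparrow s^*$, continuity forces $\phi(r) \to 0^+$ and hence $F(\phi(r)) \to -\infty$, contradicting the uniform lower bound $F(\phi(0)) - CT$. Therefore $\phi(0) = 0$, and then monotonicity plus nonnegativity of $\phi$ forces $\phi \equiv 0$ on $[0,T]$.

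The point requiring care is the strict positivity of $\phi$ on $[0, s^*)$, needed so that $h(\phi(t)) > 0$ and the division (or equivalently the composition $F\circ\phi$) makes sense; this is supplied by the monotonicity of $\phi$, itself a consequence of $h \geq 0$. Otherwise the argument is a routine backward Bihari/Gronwall manipulation: orienting the integral from $t$ to $T$ in the definition of $\phi$ is precisely what reverses the usual sign conventions and allows the same divergence-based contradiction to run from $T$ back toward $0$.
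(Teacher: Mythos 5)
Your argument is correct: the paper itself gives no proof of this lemma (it is quoted from Mao, 1995), and what you have written is the standard backward Bihari change-of-variables argument that establishes it there — majorize $u$ by the $C^1$ function $\phi(t)=C\int_t^T h(u_s)\,ds$, convert the resulting differential inequality via the antiderivative $F$ of $1/h$, and let the divergence of $\int_{0+}du/h(u)$ produce the contradiction as $\phi$ reaches zero. The only hypothesis you use tacitly is that $h(w)>0$ for $w>0$, so that $F=\int_c^{\cdot}\,ds/h(s)$ is defined on $(0,\infty)$ and the division by $h(\phi(t))$ is legitimate; this is implicit in the condition $\int_{0+}\frac{du}{h(u)}=\infty$ and holds for the function $\tilde{\rho}$ to which the lemma is applied in the paper.
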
 
	
	\begin{remark}\label{rho}
It is easy to check that if $\rho$ satisfies the conditions in (H1), there exists two positive constants $a,b$ such that $\rho(u)\leq a+bu$. Besides, by Lemma \ref{lem4}, the coefficients $f,g$ satisfying (H1) also satisfies (H1'). 
\end{remark}


In order to obtain the uniqueness result of reflected $G$-BSDE with coefficients satisfying (H1) and (A2), we first establish the following a priori estimates. 
 \begin{proposition}\label{the1.6'}
	 	Let $f,g$  satisfy (H1) and (A2). Assume
	 	\begin{displaymath}
	 	Y_t=\xi+\int_t^T f(s,Y_s,Z_s)ds+\int_t^T g(s,Y_s,Z_s)d\langle B\rangle_s-\int_t^T Z_sdB_s+(A_T-A_t),
	 	\end{displaymath}
	 	where $(Y,Z,A)\in\mathcal{S}_G^\alpha(0,T)$ with $2\leq \alpha\leq \beta$. Then, there exists a constant $C:=C(\alpha, T, \rho,G)>0$ such that for each $t\in[0,T]$,
	 	\begin{align*}
	 	\hat{\mathbb{E}}_t[(\int_t^T |Z_s|^2ds)^{\frac{\alpha}{2}}]&\leq C\{1+\hat{\mathbb{E}}_t[\sup_{s\in[t,T]}|Y_s|^\alpha]
	 	+(\hat{\mathbb{E}}_t[\sup_{s\in[t,T]}|Y_s|^\alpha])^{1/2}(\hat{\mathbb{E}}_t[(\int_t^T h_s ds)^\alpha])^{1/2}\},\\
	 	\hat{\mathbb{E}}_t[|A_T-A_t|^\alpha]&\leq C\{1+\hat{\mathbb{E}}_t[\sup_{s\in[t,T]}|Y_s|^\alpha]+\hat{\mathbb{E}}_t[(\int_t^T h_s ds)^\alpha]\},
	 	\end{align*}
	 	where $h_s=|f(s,0,0)|+|g(s,0,0)|$.
	 \end{proposition}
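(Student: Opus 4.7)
The plan is to adapt the proof of Proposition~\ref{the1.6} from the Lipschitz case to the non-Lipschitz setting (H1), exploiting Remark~\ref{rho}. Since $\rho$ is continuous, concave, nondecreasing and satisfies $\rho(0)=0$, it is dominated by an affine function: there exist $a,b>0$ with $\rho(u)\le a+bu$ for all $u\ge 0$. Combined with (H1), this yields the linear growth bounds
\[
|f(s,y,z)|\le h_s+a+b|y|+L|z|,\qquad |g(s,y,z)|\le h_s+a+b|y|+L|z|.
\]
Morally, the generators then behave like Lipschitz generators up to an extra additive constant $a$, and it is precisely this additive constant that produces the extra ``$+1$'' appearing in the conclusion of Proposition~\ref{the1.6'} but not in Proposition~\ref{the1.6}.

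For the $Z$-estimate, I would apply the $G$-It\^o formula to $|Y_t|^2$:
\[
|Y_t|^2+\int_t^T |Z_s|^2\,d\langle B\rangle_s = |\xi|^2+2\int_t^T Y_s f(s,Y_s,Z_s)\,ds+2\int_t^T Y_s g(s,Y_s,Z_s)\,d\langle B\rangle_s-2\int_t^T Y_s Z_s\,dB_s+2\int_t^T Y_s\,dA_s.
\]
Using the linear growth above together with Young's inequality on the cross term $2|Y_s|\cdot L|Z_s|$, the $f$- and $g$-contributions are dominated by $\epsilon|Z_s|^2+C_\epsilon |Y_s|^2+2|Y_s|h_s+2a|Y_s|$; picking $\epsilon$ small enough and using $\underline\sigma^2\,ds\le d\langle B\rangle_s\le\bar\sigma^2\,ds$ lets the $|Z|^2$ contribution be absorbed into the left-hand side. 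The obstacle term is bounded via $|\int_t^T Y_s\,dA_s|\le \sup_{s\in[t,T]}|Y_s|\,(A_T-A_t)$ followed by a further Young splitting. Raising to the power $\alpha/2$, taking conditional $G$-expectation, and invoking the BDG inequality of Proposition~\ref{BDG} on the stochastic integral then yields an inequality of the required form, up to a term of the shape $(\hat{\mathbb E}_t[|A_T-A_t|^\alpha])^{1/2}(\hat{\mathbb E}_t[\sup_{s\in[t,T]}|Y_s|^\alpha])^{1/2}$.

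The estimate for $A_T-A_t$ is obtained by isolating it from the equation,
\[
A_T-A_t = Y_t-\xi-\int_t^T f(s,Y_s,Z_s)\,ds-\int_t^T g(s,Y_s,Z_s)\,d\langle B\rangle_s+\int_t^T Z_s\,dB_s,
\]
raising to the power $\alpha$, applying $\hat{\mathbb{E}}_t$, and using BDG on the stochastic integral together with the $Z$-estimate just obtained. As before, the linear growth produces an additive constant integrated over $[t,T]$, accounting for the extra ``$+1$''.

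The main obstacle is the implicit coupling between the $Z$- and $A$-bounds: the It\^o expansion of $|Y|^2$ brings in $\int Y\,dA$, while bounding $A_T-A_t$ via the BSDE requires control of the stochastic integral $\int Z\,dB$. As in the Lipschitz setting, this is resolved by a simultaneous treatment: substitute the $A$-bound back into the right-hand side of the $Z$-bound, then choose the Young parameter small enough so that the $\hat{\mathbb{E}}_t[|A_T-A_t|^\alpha]$ term generated when controlling $\int Y\,dA$ can be re-absorbed. The only genuine new ingredient compared with Proposition~\ref{the1.6} is the affine dominance $\rho(u)\le a+bu$, whose constant part $a$ is responsible for the extra $1$ in both bounds.
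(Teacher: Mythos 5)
Your proposal is correct and follows essentially the same route as the paper: the paper likewise reduces to the Lipschitz-case argument (G-It\^o formula applied to $|Y_t|^2$, BDG, and the algebraic identity $A_T-A_t=Y_t-\xi-\int_t^T f\,ds+\int_t^T Z\,dB$, with the two resulting bounds coupled and then decoupled by Young's inequality and absorption), the only new ingredient being the affine dominance $\rho(u)\le a+bu$ from Remark \ref{rho}, which is exactly what produces the extra additive constant $1$. No gaps.
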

	 
	 \begin{remark}
	 Compared with Proposition \ref{the1.6}, there is an additional constant in our inequality due to the property of $\rho$ (see Remark \ref{rho}).
	 \end{remark}
	 
	 \begin{proof}
	 For simplicity, we only prove the case that $g\equiv 0$ and $t=0$. Applying $G$-It\^{o}'s formula to $|Y_t|^2$, we have
	 \begin{displaymath}
	 |Y_0|^2+\int_0^T |Z_s|^2d\langle B\rangle_s=|\xi|^2+\int_0^T 2Y_sf_s ds-\int_0^T 2Y_sZ_s dB_s+\int_0^T 2Y_s dA_s,
	 \end{displaymath}
	 where $f_s=f(s,Y_s,Z_s)$. Comparing with the proof of Proposition 3.5 in \cite{HJPS1}, the main difference is the calculation for $\hat{\mathbb{E}}[|\int_0^T Y_s f_s ds|^{\alpha/2}]$. Recalling Remark \ref{rho} and using the fact that $2|a|\leq 1+a^2$, we obtain that 
	 \begin{align*}
	& \hat{\mathbb{E}}[|\int_0^T Y_s f_s ds|^{\alpha/2}]\\
	\leq &C\hat{\mathbb{E}}[|\int_0^T |Y_s|(\rho(|Y_s|)+f^0_s+|Z_s|)ds|^{\alpha/2}]\\
	 \leq & C\hat{\mathbb{E}}[|\int_0^T (|Y_s|+|Y_s|^2+|Y_sf_s^0|+|Y_sZ_s|)ds|^{\alpha/2}]\\
	 \leq &C\{1+\hat{\mathbb{E}}[\sup_{s\in[0,T]}|Y_s|^\alpha]+(\hat{\mathbb{E}}[\sup_{s\in[0,T]}|Y_s|^\alpha])^{\frac{1}{2}} [ (\hat{\mathbb{E}}[(\int_0^T f_s^0 ds)^\alpha])^{\frac{1}{2}}  +(\hat{\mathbb{E}}[(\int_0^T |Z_s|^2ds)^{\frac{\alpha}{2}}])^{\frac{1}{2}}\},
	 \end{align*}
      where $f^0_s=|f(s,0,0)|$.	 Then, we have 
	 \begin{equation}\begin{split}\label{3.5}
	 \hat{\mathbb{E}}[(\int_0^T |Z_s|^2ds)^{\frac{\alpha}{2}}]\leq &C\{  1+\hat{\mathbb{E}}[\sup_{s\in[0,T]}|Y_s|^\alpha]+(\hat{\mathbb{E}}[\sup_{s\in[0,T]}|Y_s|^\alpha])^{\frac{1}{2}}\\
	& \times [ (\hat{\mathbb{E}}[(\int_0^T f_s^0 ds)^\alpha])^{\frac{1}{2}}  +(\hat{\mathbb{E}}[|A_T|^\alpha])^{\frac{1}{2}}  ]\}.
	 \end{split}\end{equation}
	 On the other hand, 
	 $$A_T=Y_0-\xi-\int_0^T f_s ds+\int_0^T Z_s dB_s.$$
	 Using the property of $\rho$, simple calculation implies that 
	 \begin{equation}\label{3.6}
	 \hat{\mathbb{E}}[|A_T|^\alpha]\leq C\{ 1+\hat{\mathbb{E}}[\sup_{s\in[0,T]}|Y_s|^\alpha]+  \hat{\mathbb{E}}[(\int_0^T f_s^0 ds)^\alpha] + \hat{\mathbb{E}}[(\int_0^T |Z_s|^2ds)^{\frac{\alpha}{2}}] \}
	 \end{equation}
	 By \eqref{3.5} and \eqref{3.6}, we obtain the desired result.
	 \end{proof}

	\begin{proof}[Proof of Theorem \ref{main}: the uniqueness result]
	For simplicity, we assume that $g\equiv 0$. Suppose that $(Y^i,Z^i,A^i)\in\mathcal{S}^\alpha(0,T)$, $i=1,2$ are solutions to the reflected $G$-BSDE with parameters $(\xi,f,S)$.  Set $\hat{f}_t=f(t,Y_t^1,Z_t^1)-f(t,Y_t^2,Z_t^2)$, $\hat{A}_t=A^1_t-A^2_t$, $\tilde{Z}_t=Z^1_t-Z^2_t$ and $\bar{Y}_t=|\tilde{Y}_t|^2=|Y^1_t-Y^2_t|^2$. Applying It\^{o}'s formula to $\bar{Y}_t^{\frac{\alpha}{2}}e^{rt}$, where $r>0$ will be determined later, we get
\begin{equation}\label{0}
\begin{split}
&\quad \bar{Y}_t^{\alpha/2}e^{rt}+\int_t^T re^{rs}\bar{Y}_s^{\alpha/2}ds+\int_t^T \frac{\alpha}{2} e^{rs}
\bar{Y}_s^{\alpha/2-1}(\tilde{Z}_s)^2d\langle B\rangle_s\\
&=\alpha(1-\frac{\alpha}{2})\int_t^Te^{rs}\bar{Y}_s^{\alpha/2-2}(\tilde{Y}_s)^2(\tilde{Z}_s)^2d\langle B\rangle_s-\int_t^T\alpha e^{rs}\bar{Y}_s^{\alpha/2-1}\tilde{Y}_s\tilde{Z}_sdB_s\\
&\quad+\int_t^T{\alpha} e^{rs}\bar{Y}_s^{\alpha/2-1}\tilde{Y}_s\hat{f}_sds +\int_t^T\alpha e^{rs}\bar{Y}_s^{\alpha/2-1}\tilde{Y}_sd\hat{A}_s\\
&\leq\int_t^T{\alpha}e^{rs}\bar{Y}_s^{\frac{\alpha-1}{2}}|\hat{f}_s|ds
+\alpha(1-\frac{\alpha}{2})\int_t^Te^{rs}\bar{Y}_s^{\alpha/2-1}(\tilde{Z}_s)^2d\langle B\rangle_s-(M_T-M_t),
\end{split}
\end{equation}
where $M_t=\int_0^t \alpha e^{rs}\bar{Y}_s^{\alpha/2-1}(\tilde{Y}_s\tilde{Z}_sdB_s-(\tilde{Y}_s)^+dA_s^1-(\tilde{Y}_s)^-dA_s^2)$. By the proof of Proposition 3.4 in \cite{LPSH}, $\{M_t\}_{t\in[0,T]}$ is a $G$-martingale. 
From the assumption of $f^1$, we derive that
\begin{equation}\label{1}
\begin{split}
\int_t^T{\alpha} e^{rs}\bar{Y}_s^{\frac{\alpha-1}{2}}|\hat{f}_s|ds
&\leq \int_t^T{\alpha} e^{rs}\bar{Y}_s^{\frac{\alpha-1}{2}}\{ \rho(|\tilde{Y}_s|)+L|\tilde{Z}_s|  \}ds\\
&\leq \frac{\alpha L^2}{\underline{\sigma}^2(\alpha-1)}\int_t^T e^{rs}\bar{Y}_s^{\alpha/2}ds
+\int_t^T{\alpha} e^{rs}\bar{Y}_s^{\frac{\alpha-1}{2}}\rho(|\tilde{Y}_s|)ds\\
&\quad+\frac{\alpha(\alpha-1)}{4}\int_t^Te^{rs}\bar{Y}_s^{\alpha/2-1}(\tilde{Z}_s)^2d\langle B\rangle_s.
\end{split}
\end{equation}
By Young's inequality, we have
\begin{equation}\label{2}
\int_t^T{\alpha} e^{rs}\bar{Y}_s^{\frac{\alpha-1}{2}}\rho(|\tilde{Y}_s|)ds\\
 \leq (\alpha-1)\int_t^T  e^{rs}\bar{Y}_s^{\alpha/2}ds+\int_t^T e^{rs}\rho^\alpha(|\tilde{Y}_s|)ds.
\end{equation}
By \eqref{0}--\eqref{2} and setting $r=\alpha+\frac{\alpha L^{2}}{\underline{\sigma}^2(\alpha-1)}$, we get
\begin{displaymath}
\bar{Y}_t^{\alpha/2}e^{rt}+(M_T-M_t)\leq C\int_t^Te^{rs}\rho^{\alpha}(|\tilde{Y}_s|)ds.
\end{displaymath}
Taking conditional expectation on both sides, we have
\begin{align}\label{problem}
|\tilde{Y}_t|^\alpha\leq C\hat{\mathbb{E}}_t[\int_t^T\tilde{\rho}(|\tilde{Y}_s|^\alpha)ds],
\end{align}
where $\tilde{\rho}(x)=\rho^{\alpha}(|x|^{\frac{1}{\alpha}})$. By Lemma \ref{lem4}, $\tilde{\rho}$ is a continuous, nondecreasing and concave function. Taking expectations on both sides and applying Lemma \ref{lem2.14} yield that 
\begin{displaymath}
\hat{\mathbb{E}}[|\tilde{Y}_t|^\alpha]\leq C\hat{\mathbb{E}}[\int_t^T\tilde{\rho}(|\tilde{Y}_s|^\alpha)ds]\leq  C\int_t^T \tilde{\rho}(\hat{\mathbb{E}}[|\tilde{Y}_s|^\alpha])ds.
\end{displaymath}
By Lemma \ref{lem4} again, noting that $\alpha<\beta$ and $\int_{0+}\frac{du}{\rho^\beta(u^{1/\beta})}=+\infty$ , we have $\int_{0+}\frac{du}{\tilde{\rho}(u)}=\infty$. Applying Lemma \ref{lem2.16}, we obtain that $\hat{\mathbb{E}}[|\tilde{Y}_t|^\alpha]=0$, for any $t\in[0,T]$. Since $Y^i$, $i=1,2$ are continuous, by Lemma 2.19 in \cite{He}, $Y^1_t=Y^2_t$, $t\in[0,T]$, q.s. 

Now, we prove that $\hat{\mathbb{E}}[(\int_0^T |\tilde{Z}_s|^2 ds)^{\alpha/2}]=0$. Taking $\alpha=2$ and $r=0$ in \eqref{0} implies that
\begin{displaymath}
|\tilde{Y}_0|^2+\int_0^T (\tilde{Z}_s)^2 d\langle B\rangle_s=\int_0^T 2\tilde{Y}_s\hat{f}_s ds-\int_0^T 2\tilde{Y}_s\tilde{Z}_s dB_s+\int_0^T 2\tilde{Y}_s d\hat{A}_s.
\end{displaymath}
By the property of $\rho$, we obtain that
\begin{align*}
\hat{\mathbb{E}}[(\int_0^T \tilde{Y}_s \hat{f}_s ds)^{\frac{\alpha}{2}}]\leq &\hat{\mathbb{E}}[(\int_0^T |\tilde{Y}_s|(\rho(|\tilde{Y}_s|)+L|\tilde{Z}_s|)ds)^{\frac{\alpha}{2}}]\\
\leq & C\{\hat{\mathbb{E}}[\sup_{s\in[0,T]}|\tilde{Y}_s|^{\frac{\alpha}{2}}+\sup_{s\in[0,T]}|\tilde{Y}_s|^\alpha]+(\hat{\mathbb{E}}[\sup_{s\in[0,T]}|\tilde{Y}_s|^\alpha])^{\frac{1}{2}}(\hat{\mathbb{E}}[(\int_0^T|\tilde{Z}_s|^2 ds)^{\frac{\alpha}{2}}])^{\frac{1}{2}}\}.
\end{align*}
Using BDG inequality (see Proposition \ref{BDG}), we have
\begin{align*}
\hat{\mathbb{E}}[(\int_0^T \tilde{Y}_s\tilde{Z}_s dB_s)^{\frac{\alpha}{2}}]\leq C(\hat{\mathbb{E}}[\sup_{s\in[0,T]}|\tilde{Y}_s|^\alpha])^{\frac{1}{2}}(\hat{\mathbb{E}}[(\int_0^T|\tilde{Z}_s|^2 ds)^{\frac{\alpha}{2}}])^{\frac{1}{2}}\}.
\end{align*}
By Proposition \ref{the1.6'}, $\hat{\mathbb{E}}[|A_T^i|^\alpha]$ are bounded $i=1,2$. It follows that 
\begin{align*}
\hat{\mathbb{E}}[ (\int_0^T \tilde{Y}_s d\hat{A}_s)^{\frac{\alpha}{2}} ]\leq C(\hat{\mathbb{E}}[\sup_{s\in[0,T]}|\tilde{Y}_s|^\alpha])^{\frac{1}{2}}.
\end{align*}
All the above analysis indicates that 
\begin{align*}
\hat{\mathbb{E}}[(\int_0^T |\tilde{Z}_s|^2 ds)^{\frac{\alpha}{2}}]\leq C\{  (\hat{\mathbb{E}}[\sup_{s\in[0,T]}|\tilde{Y}_s|^\alpha])^{\frac{1}{2}}+\hat{\mathbb{E}}[\sup_{s\in[0,T]}|\tilde{Y}_s|^\alpha]  \},
\end{align*}
which implies that $Z^1\equiv Z^2$.

Since
\begin{displaymath}
\hat{A}_t=\hat{Y}_0-\int_0^t \hat{f}_s ds+\int_0^t \tilde{Z}_s dB_s,
\end{displaymath}
it is easy to check that $\hat{\mathbb{E}}[\sup_{s\in[0,T]}|\hat{A}_s|^\alpha]=0$. The proof is complete.
\end{proof}

\begin{remark}
If we apply Assumption (H1') instead of (H1), then \eqref{problem} turns into
$$|\tilde{Y}_t|^\alpha\leq C\hat{\mathbb{E}}_t[\int_t^T\tilde{\mu}(|\tilde{Y}_s|^\alpha)ds],$$
where $\tilde{\mu}(x)=\mu^{\alpha/\beta}(x^{\beta/\alpha})$. However, since $\alpha/\beta<1$, $\tilde{\mu}$ may not be a concave function. Therefore, we cannot apply Lemma \ref{lem2.14} and Lemma \ref{lem2.16} to get the uniqueness result.
\end{remark}

	\section{Reflected $G$-BSDEs with non-Lipschitz coefficients: the existence result}
	
	In this section, we establish the existence result for reflected $G$-BSDEs with non-Lipschitz coefficients using two approaches: the Picard iteration and approximation via penalization. For simplicity, we assume that $g\equiv 0$. The proof is effective for the other cases.
	
	\subsection{Construction via Picard iteration}
	
	In this subection, we construct the solution by a Picard iteration method.  Let $Y^0\equiv 0$. Consider the family of reflected $G$-BSDEs parameterized by $n=1,2,\cdots$,
	\begin{displaymath}
	\begin{cases}
	Y^n_t=\xi+\int_t^T f(s,Y^{n-1}_s,Z^n_s)ds-\int_t^T Z^n_s dB_s+(A^n_T-A^n_t),\\
Y^n_t\geq S_t, t\in[0,T],\\
 \{-\int_0^t (Y^n_s-S_s)dA^n_s\}_{t\in[0,T]} \textrm{ is a nonincreasing $G$-martingale.}
\end{cases}
        \end{displaymath}
        
        We first recall the following lemma obtained in \cite{He}, which makes the integrand $f(\cdot,\cdot,Y^n_\cdot,z)$ will defined for each fixed $z$.
        
        \begin{lemma}[\cite{He}]\label{lem2.15}
        For some $p>1$, suppose that $f(\cdot,\cdot,x)$ is a given process satisfying $f(\cdot,\cdot,x)\in M^p_G(0,T)$ for each $x\in\mathbb{R}$. Moreover, we assume that for any $x,x'\in\mathbb{R}$ and $t\in[0,T]$, 
        $$|f(t,x)-f(t,x')|\leq \gamma(|x-x'|),$$
        where $\gamma:[0,+\infty)\rightarrow[0,+\infty)$ is a nondecreasing and concave function vanishing at $0$. Then, for any $X\in M_G^p(0,T)$, we have $f(\cdot,\cdot,X_\cdot)\in M_G^p(0,T)$. 
        \end{lemma}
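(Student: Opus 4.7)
The plan is to approximate $X$ by simple processes in $M_G^0(0,T)$ and transport the convergence to $f(\cdot,\cdot,X_\cdot)$ using the non-Lipschitz modulus $\gamma$. The key analytic input is the chord bound derived from $\gamma(0)=0$, monotonicity, and concavity: for every $\epsilon>0$ and every $u\geq 0$,
\[
\gamma(u)\leq\gamma(\epsilon)+\frac{\gamma(\epsilon)}{\epsilon}\,u,\qquad\text{hence}\qquad\gamma(u)^p\leq C_p\bigl(\gamma(\epsilon)^p+(\gamma(\epsilon)/\epsilon)^p u^p\bigr).
\]
Indeed, $\gamma\leq\gamma(\epsilon)$ on $[0,\epsilon]$ by monotonicity, while $u\mapsto\gamma(u)/u$ is nonincreasing (a standard fact for concave $\gamma$ with $\gamma(0)=0$), so $\gamma(u)\leq(\gamma(\epsilon)/\epsilon)u$ on $[\epsilon,\infty)$.

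First I would handle the base case $X\in M_G^0(0,T)$. Writing $X_t=\sum_{j=0}^{N-1}\xi_j\mathbf{1}_{[t_j,t_{j+1})}(t)$ with $\xi_j\in L_{ip}(\Omega_{t_j})$, it suffices to check that $f(\cdot,\cdot,\xi)\mathbf{1}_{[s,s']}\in M_G^p(0,T)$ for a fixed $\xi\in L_{ip}(\Omega_s)$ with $|\xi|\leq c$. I would choose a smooth $C_{b,Lip}$ partition of unity $\{\psi_i^m\}_{i=1}^{k_m}$ on $[-c,c]$ with each $\mathrm{supp}\,\psi_i^m$ of diameter at most $2^{-m}$ centered at nodes $x_i^m$, and define
\[
F^m(t,\omega):=\sum_{i=1}^{k_m}f(t,\omega,x_i^m)\,\psi_i^m(\xi(\omega))\,\mathbf{1}_{[s,s']}(t).
\]
Each summand lies in $M_G^p$: the hypothesis yields $M_G^0$-approximations $\sum_\ell\zeta_\ell\mathbf{1}_{[s_\ell,s_{\ell+1})}$ of $f(\cdot,\cdot,x_i^m)$, and multiplying the coefficient on each interval $[s_\ell,s_{\ell+1})\cap[s,s']$ by $\psi_i^m(\xi)\in L_{ip}(\Omega_s)$ keeps us inside $M_G^0$ (the adaptedness starting at $s_\ell\vee s\geq s$ is preserved since $\psi_i^m(\xi)$ is $\Omega_s$-measurable). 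Using $\sum_i\psi_i^m\equiv 1$ and the $\gamma$-bound on each support gives
\[
|F^m(t,\omega)-f(t,\omega,\xi(\omega))|\leq\sum_i\gamma(|\xi(\omega)-x_i^m|)\,\psi_i^m(\xi(\omega))\leq\gamma(2^{-m})
\]
on $[s,s']$, so $F^m\to f(\cdot,\cdot,\xi)\mathbf{1}_{[s,s']}$ uniformly, a fortiori in $M_G^p$.

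For general $X\in M_G^p(0,T)$, pick $X^n\in M_G^0(0,T)$ with $\|X-X^n\|_{M_G^p}\to 0$. The base case yields $f(\cdot,\cdot,X^n_\cdot)\in M_G^p$, and the chord bound applied pointwise inside the $\hat{\mathbb{E}}$-integral gives
\[
\|f(\cdot,\cdot,X_\cdot)-f(\cdot,\cdot,X^n_\cdot)\|_{M_G^p}^p\leq C_p T\gamma(\epsilon)^p+C_p(\gamma(\epsilon)/\epsilon)^p\|X-X^n\|_{M_G^p}^p.
\]
Letting $n\to\infty$ and then $\epsilon\to 0^+$ (using continuity of $\gamma$ at $0$) shows $\{f(\cdot,\cdot,X^n_\cdot)\}$ is Cauchy in $M_G^p$; its limit is quasi-surely equal to $f(\cdot,\cdot,X_\cdot)$, placing the latter in $M_G^p(0,T)$.

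The main obstacle is the base-case construction: one must glue the deterministic-argument approximations of the finitely many processes $f(\cdot,\cdot,x_i^m)$ simultaneously into a single $M_G^0$ process, which forces the use of the smooth $L_{ip}$ partition of unity $\psi_i^m(\xi)$ in place of the more natural but non-Lipschitz indicators $\mathbf{1}_{\{\xi\in A_i^m\}}$, and requires the $\gamma$-continuity to control the error introduced by the smoothing. Once this step is in place, the passage to general $X$ via the chord bound is essentially routine.
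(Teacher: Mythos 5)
Your argument is correct, but there is nothing in the paper to compare it against: the paper states this lemma as an import from \cite{He} and gives no proof, so you have effectively reconstructed the standard argument from the integral-Lipschitz literature (it is essentially the scheme of Bai--Lin \cite{BL} and of the proof in \cite{He}). The two pillars you identify are exactly the right ones: the chord bound $\gamma(u)\leq\gamma(\epsilon)+(\gamma(\epsilon)/\epsilon)u$, which converts the concave modulus into an affine one at the price of an additive $\gamma(\epsilon)$, and the base-case construction for step processes, where the genuinely $G$-specific difficulty is that $M_G^0(0,T)$ only admits $L_{ip}$ (hence bounded Lipschitz) coefficients, so the naive decomposition by indicators $\mathbf{1}_{\{\xi\in A_i^m\}}$ is not available and must be replaced by a Lipschitz partition of unity $\psi_i^m(\xi)$; your observation that the product $\zeta_\ell\,\psi_i^m(\xi)$ stays in $L_{ip}(\Omega_{s_\ell\vee s})$ because products of bounded Lipschitz functions are bounded Lipschitz, and that the smoothing error is controlled uniformly by $\gamma(2^{-m})$, is precisely what makes the gluing work. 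The passage to general $X\in M_G^p$ via the chord bound and a $\limsup_n$ followed by $\epsilon\to 0^+$ is routine, as you say.

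One small point worth making explicit: both the estimate $\gamma(2^{-m})\to 0$ in the base case and the final limit $\epsilon\to 0^+$ require $\gamma(0^+)=0$, i.e.\ continuity of $\gamma$ at the origin. This does not follow from ``nondecreasing, concave, $\gamma(0)=0$'' alone (consider $\gamma(0)=0$ and $\gamma(u)=1+u$ for $u>0$, which satisfies all three), so it is an additional hypothesis you are using. It is harmless here --- every modulus to which the lemma is applied in this paper ($\rho$ in (H1), $\mu$ in (H1')) is assumed continuous --- but you should state it rather than tuck it into a parenthesis.
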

        
        By Lemma \ref{lem2.15}, for any fixed $z$, $f(\cdot,\cdot,0,z)\in M_G^\beta(0,T)$. Then by Theorem \ref{the1.14}, for any $2\leq \alpha<\beta$, we have $Y^1\in S^\alpha_G(0,T)$. Hence, for any $\varepsilon\in(0,\beta-2)$, we have $Y^1\in S_G^{\beta-\frac{\varepsilon}{2}}(0,T)\subset S_G^{\beta-{\varepsilon}}(0,T)$, which implies that $f(\cdot,\cdot,Y^1,z)\in M_G^{\beta-\frac{\varepsilon}{2}}(0,T)$ by Lemma \ref{lem2.15}. Following this procedure, using Theorem \ref{the1.14} and Lemma \ref{lem2.15},  for any fixed $z$, we have $f(\cdot,\cdot,Y^{n-1},z)\in M_G^{\beta-(1-\frac{1}{2^{n-1}})\varepsilon}(0,T)$ and  $Y^n\in S_G^{\beta-(1-\frac{1}{2^{n}})\varepsilon}(0,T)\subset S_G^{\beta-\varepsilon}(0,T)$, $n=2,3,\cdots$. Since $\varepsilon$ can be arbitrarily small, for any $2\leq \alpha<\beta$ and $n\in\mathbb{N}$, we have $Y^n\in S_G^\alpha(0,T)$. Actually, we have the following uniform estimates for $Y^n$.
        
        \begin{lemma}\label{lem3.3}
        Assume that $(\xi,f,g,S)$ satisfy (H1), (A2)-(A4) for some $\beta>2$. There exists a constant $C$ depending on $\beta,T,G,L,\rho$ but not $n$, such that 
        \begin{displaymath}
        \sup_{t\in[0,T]}\hat{\mathbb{E}}[|Y_t^n|^\beta]\leq C.
        \end{displaymath}
        \end{lemma}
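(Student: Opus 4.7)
The plan is to recognize that $(Y^n, Z^n, A^n)$ is the solution of a reflected $G$-BSDE whose generator $\tilde{f}^n(s,y,z) := f(s, Y^{n-1}_s, z)$ is trivially Lipschitz in $y$ (it does not depend on $y$) and $L$-Lipschitz in $z$, so Assumption (A1) is satisfied with $\kappa = L$. Moreover, the discussion preceding the lemma already shows $Y^{n-1} \in S_G^\alpha(0,T)$ for every $\alpha < \beta$, and Lemma \ref{lem2.15} propagates this to $\tilde{f}^n(\cdot,\cdot,z) \in M_G^\alpha(0,T)$. The strategy is then to iterate the a priori bound of Theorem \ref{the1.14} along the Picard sequence, using the linear growth of $\rho$ from Remark \ref{rho} to turn the non-Lipschitz term into something manageable.

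Concretely, applying Theorem \ref{the1.14} to the $n$-th reflected $G$-BSDE and substituting $\tilde{f}^n(s,0,0) = f(s, Y^{n-1}_s, 0)$ yields, for every $t \in [0,T]$,
$$|Y^n_t|^\beta \leq C\, \hat{\mathbb{E}}_t\Bigl[|\xi|^\beta + \int_t^T \bigl(|f(s,Y^{n-1}_s,0)|^\beta + |b^I(s)|^\beta + |\sigma^I(s)|^\beta\bigr)\, ds + \sup_{s \in [t,T]} |I_s|^\beta\Bigr],$$
with $C = C(\beta,T,L,G)$ independent of $n$. By (H1) and Remark \ref{rho}, $|f(s,Y^{n-1}_s,0)| \leq |f(s,0,0)| + a + b|Y^{n-1}_s|$, so lumping the fixed data-dependent terms into a single constant $C_1$ (finite by (A2)--(A4)) and taking unconditional $G$-expectation gives the Picard recursion
$$u^n_t \leq C_1 + C_2 \int_t^T u^{n-1}_s\, ds, \qquad u^n_t := \hat{\mathbb{E}}[|Y^n_t|^\beta],$$
where $C_2$ depends on $\beta, T, L, G, b$ but not on $n$.

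Since $Y^0 \equiv 0$ gives $u^0 \equiv 0$, a straightforward induction on $n$ yields
$$u^n_t \leq C_1 \sum_{k=0}^{n-1} \frac{(C_2(T-t))^k}{k!} \leq C_1\, e^{C_2 T},$$
which is the desired uniform bound with $C := C_1 e^{C_2 T}$.

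The main technical subtlety, and the only nontrivial point, is the applicability of Theorem \ref{the1.14}'s pointwise a priori estimate at the critical exponent $\alpha = \beta$, since the theorem is literally stated for $\alpha < \beta$. Two equivalent routes handle this: one may observe that the estimate is derived through a pointwise $G$-It\^{o} calculation that remains valid at $\alpha = \beta$ whenever the right-hand side is in $L_G^1$ (which is guaranteed by (A2)--(A4) together with the inductive hypothesis $u^{n-1} \in L^\infty([0,T])$ feeding into $\tilde{f}^n(\cdot,\cdot,0) \in M_G^\beta(0,T)$ via Lemma \ref{lem2.15}); alternatively, one runs the whole recursion at level $\beta - \varepsilon$ with bounds uniform in $\varepsilon$, and then passes $\varepsilon \downarrow 0$ by Fatou's lemma. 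Everything else in the argument is routine.
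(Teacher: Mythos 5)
Your proof is essentially the paper's: both apply the a priori estimate of Theorem \ref{the1.14} to the linearized equation with frozen generator $f(s,Y^{n-1}_s,z)$ (Lipschitz with $\kappa=L$), use the linear growth $\rho(u)\leq a+bu$ from Remark \ref{rho} to get $|f(s,Y^{n-1}_s,0)|^\beta\leq C(1+|f(s,0,0)|^\beta+|Y^{n-1}_s|^\beta)$, and close the Gronwall-type recursion $u^n_t\leq C_1+C_2\int_t^T u^{n-1}_s\,ds$ by induction from $Y^0\equiv 0$. The only place you diverge is the endpoint exponent: the paper gets the $\beta$-level pointwise bound simply by raising the $\alpha$-level estimate ($\alpha<\beta$) to the power $\beta/\alpha$ and applying H\"older/Jensen to the conditional expectation, which is cleaner and more self-contained than either of your two proposed fixes. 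Of those, the first (claiming the It\^{o} computation behind Theorem \ref{the1.14} is valid at $\alpha=\beta$) is the shakier, since (A2) for the frozen generator is only established at levels strictly below $\beta$ by the discussion preceding the lemma; the second (uniform-in-$\varepsilon$ bounds at level $\beta-\varepsilon$ plus Fatou) works but requires verifying that the constant in Theorem \ref{the1.14} does not degenerate as $\alpha\uparrow\beta$. You should adopt the H\"older lift instead.
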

	
	\begin{proof}
	The proof is similar with the one of Lemma 3.3 in \cite{He}. For readers' convenience, we give a short proof here. By Theorem \ref{the1.14}, for any $2\leq \alpha<\beta$, we have
	\begin{displaymath}
 	|Y_t^n|^\alpha\leq C\hat{\mathbb{E}}_t[|\xi|^\alpha+\int_t^T(|f(s,Y^{n-1}_s,0)|^\alpha+|b^I(s)|^\alpha+|\sigma^I(s)|^\alpha )ds+\sup_{s\in[t,T]}|I_s|^\alpha].
 	\end{displaymath}	
	We then obtain the following equation by the H\"{o}lder inequality
	\begin{displaymath}
 	|Y_t^n|^\beta\leq C\hat{\mathbb{E}}_t[|\xi|^\beta+\int_t^T(|f(s,Y^{n-1}_s,0)|^\beta+|b^I(s)|^\beta+|\sigma^I(s)|^\beta )ds+\sup_{s\in[t,T]}|I_s|^\beta].
 	\end{displaymath}	
	Recalling Remark \ref{rho} and the assumption on $f$, we have
	$$|f(s,Y^{n-1}_s,0)|^\beta\leq C(1+|f(s,0,0)|^\beta+|Y^{n-1}_s|^\beta).$$
	All the above analysis yields that 
	\begin{align*}
	\hat{\mathbb{E}}[|Y^n_t|^\beta]&\leq C\{1+\hat{\mathbb{E}}[|\xi|^\beta+\int_0^T(|b^I(s)|^\beta+|\sigma^I(s)|^\beta )ds+\sup_{s\in[0,T]}|I_s|^\beta]+\hat{\mathbb{E}}[\int_t^T |Y^{n-1}_s|^\beta ds]\}\\
	&\leq C\{1+\int_t^T \hat{\mathbb{E}}[|Y^{n-1}_s|^\beta]ds\}.
	\end{align*}
	Let $p(\cdot)$ be the solution of the ODE $p(t)=C(1+\int_t^T p(s)ds)$. It is easy to verify that $p(t)=Ce^{C(T-t)}$. By an induction method, for any $n\geq 1$ and $t\in[0,T]$, we have 
	$$\hat{\mathbb{E}}[|Y^n_t|^\beta]\leq p(t)\leq Ce^{CT},$$
	which completes the proof.
	\end{proof}
	
	The following lemma indicates that $\{Y^n\}_{n\in\mathbb{N}}$ is a Cauchy sequence under the norm $\|\cdot\|_{\bar{S}_G^\beta}$, where $\|\eta\|_{\bar{S}_G^\beta}:=\sup_{t\in[0,T]}\hat{\mathbb{E}}[|\eta_t|^\beta]^{\frac{1}{\beta}}$.
	\begin{lemma}\label{lem3.4}
	Assume that $(\xi,f,g,S)$ satisfy (H1), (A2)-(A4) for some $\beta>2$. Then, we have
	\begin{displaymath}
	\lim_{n,m\rightarrow\infty} \sup_{t\in[0,T]}\hat{\mathbb{E}}[|Y^{n+m}_t-Y^n_t|^\beta]=0.
	\end{displaymath}
	\end{lemma}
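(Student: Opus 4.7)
The plan is to transfer the Itô-expansion strategy from the uniqueness proof of Theorem \ref{main} to the pair $(Y^{n+m}, Y^n)$ of Picard iterates, and then to close the Cauchy argument with a backward Bihari inequality. Set $\tilde{Y}_t = Y^{n+m}_t - Y^n_t$, $\tilde{Z}_t = Z^{n+m}_t - Z^n_t$, $\tilde{A}_t = A^{n+m}_t - A^n_t$, and $\hat{f}_s = f(s, Y^{n+m-1}_s, Z^{n+m}_s) - f(s, Y^{n-1}_s, Z^n_s)$. Then $\tilde{Y}_T = 0$ and $\tilde{Y}$ solves the BSDE $\tilde{Y}_t = \int_t^T \hat{f}_s\, ds - \int_t^T \tilde{Z}_s\, dB_s + (\tilde{A}_T - \tilde{A}_t)$. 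My plan is to apply $G$-Itô's formula to $\bar{Y}_t^{\beta/2} e^{rt}$ with $\bar{Y}_t = |\tilde{Y}_t|^2$, following the derivation of \eqref{0} verbatim but with $\beta$ replacing $\alpha$ and $\tilde{A}$ in place of $\hat{A}$.

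The contribution from $d\tilde{A}_s = dA^{n+m}_s - dA^n_s$ decomposes through $\tilde{Y}_s = (\tilde{Y}_s)^+ - (\tilde{Y}_s)^-$ into four integrals; since both iterates obey $Y \geq S$ and the corresponding $G$-martingale reflection condition for the \emph{same} obstacle $S$, the argument behind Proposition 3.4 of \cite{LPSH} will show that the combination
\[
M_t := \int_0^t \beta e^{rs} \bar{Y}_s^{\beta/2-1}\bigl(\tilde{Y}_s\tilde{Z}_s\, dB_s - (\tilde{Y}_s)^+ dA^{n+m}_s - (\tilde{Y}_s)^- dA^n_s\bigr)
\]
is a $G$-martingale, while the remaining two reflection terms have the right sign and can be dropped. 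I will then invoke (H1) in the form $|\hat{f}_s| \leq \rho(|Y^{n+m-1}_s - Y^{n-1}_s|) + L|\tilde{Z}_s|$ and apply Young's inequality as in \eqref{1}--\eqref{2} to absorb the $|\tilde{Z}|^2$-term on the left; choosing $r = \beta + \beta L^2/(\underline{\sigma}^2(\beta-1))$ cancels the $\bar{Y}_s^{\beta/2}$-terms. Taking conditional $G$-expectation (killing $M$) and then $\hat{\mathbb{E}}[\cdot]$, using subadditive Fubini for $\hat{\mathbb{E}}$ on nonnegative integrands together with the Jensen inequality (Lemma \ref{lem2.14}) applied to the continuous, nondecreasing, concave function $\tilde{\rho}(x) := \rho^\beta(x^{1/\beta})$ (concavity from Lemma \ref{lem4}), yields
\[
\hat{\mathbb{E}}[|Y^{n+m}_t - Y^n_t|^\beta] \leq C\int_t^T \tilde{\rho}\!\left(\hat{\mathbb{E}}[|Y^{n+m-1}_s - Y^{n-1}_s|^\beta]\right) ds.
\]

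To close the argument, I set $\psi_n(t) := \sup_{m\geq 1}\hat{\mathbb{E}}[|Y^{n+m}_t - Y^n_t|^\beta]$, which by Lemma \ref{lem3.3} is bounded by some constant $K$ uniformly in $n$ and $t$. Taking supremum over $m$ and using monotonicity of $\tilde{\rho}$ gives $\psi_n(t) \leq C \int_t^T \tilde{\rho}(\psi_{n-1}(s))\, ds$. Setting $\phi(t) = \limsup_{n\to\infty} \psi_n(t)$ and using reverse Fatou on the uniformly bounded integrand $\tilde{\rho}(\psi_{n-1}(s))$, combined with the fact that a continuous nondecreasing function commutes with $\limsup$ on bounded sequences, yields $\phi(t) \leq C \int_t^T \tilde{\rho}(\phi(s))\, ds$. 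Since Lemma \ref{lem4} ensures $\int_{0+} du/\tilde{\rho}(u) = +\infty$, the backward Bihari inequality (Lemma \ref{lem2.16}) forces $\phi \equiv 0$, which is exactly the Cauchy property claimed.

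The principal technical obstacle I expect is the handling of the two reflection measures $dA^{n+m}$ and $dA^n$ in the Itô expansion: one must combine them with the stochastic integral $\int\cdot\, dB$ to absorb the sign-indefinite reflection terms into a genuine $G$-martingale, exactly along the lines of Proposition 3.4 in \cite{LPSH}. A secondary subtlety is the passage to $\limsup$ in $n$, which requires the uniform $\beta$-th moment bound from Lemma \ref{lem3.3} for reverse Fatou to apply, plus a mild regularity check on $\phi$ (inherited from the continuity of the iterates) so that Lemma \ref{lem2.16} is applicable.
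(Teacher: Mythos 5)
Your argument is correct and reaches the same key recursive inequality $\hat{\mathbb{E}}[|Y^{n+m}_t-Y^n_t|^\beta]\leq C\int_t^T\tilde{\rho}(\hat{\mathbb{E}}[|Y^{n+m-1}_s-Y^{n-1}_s|^\beta])\,ds$ and the same Bihari closing step that the paper intends; the difference is purely in how you obtain that inequality. The paper's (omitted) proof simply invokes Proposition \ref{the1.10}: the two iterates are solutions of reflected $G$-BSDEs with the \emph{same} obstacle $S$ and the frozen generators $f^i(s,y,z)=f(s,Y^{\cdot}_s,z)$, which are Lipschitz in $z$ and independent of $y$, so (A1)--(A2) hold and the stability estimate gives $|Y^{n+m}_t-Y^n_t|^\alpha\leq C\hat{\mathbb{E}}_t[\int_t^T\rho^\alpha(|Y^{n+m-1}_s-Y^{n-1}_s|)\,ds]$ directly; the rest follows He's Lemma 3.4. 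You instead re-derive this estimate from scratch by the It\^{o} expansion of the uniqueness proof, which is valid but duplicates Proposition \ref{the1.10} and is slightly more delicate on one point: applying It\^{o} at the power $\beta$ requires integrability at order $\beta$ that the iterates are only known to possess in the weaker sense $\sup_t\hat{\mathbb{E}}[|Y^n_t|^\beta]<\infty$ (they lie in $S_G^{\beta-\varepsilon}$, not $S_G^\beta$). The cleaner route, which is what the paper's Lemma \ref{lem3.3} models, is to get the conditional-expectation estimate at exponent $\alpha<\beta$ from Proposition \ref{the1.10} and then upgrade to $\beta$ by H\"{o}lder's inequality applied to the conditional expectation. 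Your handling of the reflection measures (dropping the two nonpositive terms and absorbing $(\tilde{Y})^+dA^{n+m}$, $(\tilde{Y})^-dA^n$ into the $G$-martingale via the martingale condition for the common obstacle), the use of $\tilde{\rho}(x)=\rho^\beta(x^{1/\beta})$ with concavity from Lemma \ref{lem4} and $\int_{0+}du/\tilde{\rho}(u)=\infty$ directly from (H1), the reverse Fatou passage to $\phi=\limsup_n\psi_n$, and the continuity fix needed before invoking Lemma \ref{lem2.16} are all sound; for the uniformity in $t$ claimed in the statement, just note explicitly that $\psi_n(t)\leq C\int_0^T\tilde{\rho}(\psi_{n-1}(s))\,ds$ with a right-hand side independent of $t$ that tends to $0$.
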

	
	\begin{proof}
	The proof needs Proposition \ref{the1.10} and the analysis is similar to the proof of Lemma 3.4 in \cite{He}. We omit it.
	\end{proof}
	
	\begin{lemma}\label{lem3.5}
	Assume that $(\xi,f,g,S)$ satisfy (H1), (A2)-(A4) for some $\beta>2$. For any $2\leq \alpha<\beta$, there exists a constant $C$ depending on $\alpha,\beta,T,G,L,\rho$ but not $m,n$, such that
	\begin{align*}
	 \|Z^n\|_{H^\alpha_G}^\alpha+\|A^n_T\|^\alpha_{L^\alpha_G}\leq &C\left\{1+\|Y^n\|^\alpha_{S_G^\alpha}+\|Y^{n-1}\|^\alpha_{S_G^\alpha}+\hat{\mathbb{E}}[\int_0^T|f(s,0,0)|^\alpha ds]\right\},\\
	 \|Z^{n,m}\|_{H_G^\alpha}^\alpha\leq &C\Bigg\{\|Y^{n,m}\|^\alpha_{S_G^\alpha} +\|Y^{n,m}\|^{\frac{\alpha}{2}}_{S_G^\alpha}\bigg[ 1+ \|Y^{n-1}\|^{\frac{\alpha}{2}}_{S_G^\alpha} + \|Y^{n}\|^{\frac{\alpha}{2}}_{S_G^\alpha}\\
	 &+ \|Y^{n+m-1}\|^{\frac{\alpha}{2}}_{S_G^\alpha}+ \|Y^{n+m}\|^{\frac{\alpha}{2}}_{S_G^\alpha}+(\hat{\mathbb{E}}[\int_0^T|f(s,0,0)|^\alpha ds])^{\frac{1}{2}}\bigg]\Bigg\}
	\end{align*}
	where $Y^{n,m}_t=Y^{n+m}_t-Y^n_t$, $Z^{n,m}_t=Z^{n+m}_t-Z^n_t$.
	\end{lemma}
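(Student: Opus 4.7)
The plan is to observe that each iterate $Y^n$ satisfies a reflected $G$-BSDE whose effective generator $\tilde f^n(s,y,z):=f(s,Y^{n-1}_s,z)$ is genuinely Lipschitz: it does not depend on $y$ and is $L$-Lipschitz in $z$, so (A1) holds with constant $\kappa=L$. Moreover, by Lemma \ref{lem2.15} combined with the $\beta$-moment estimate of Lemma \ref{lem3.3}, we have $\tilde f^n(\cdot,0,0)=f(\cdot,Y^{n-1}_\cdot,0)\in M_G^\beta(0,T)$, so (A2) holds as well. This puts us in the setting of the Lipschitz a priori estimates from Section 2.2, and the proof reduces to combining those with the sub-linear control $\rho(u)\leq a+bu$ from Remark \ref{rho}.

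For the first inequality I would apply Proposition \ref{the1.6} to $(Y^n,Z^n,A^n)$ with generator $\tilde f^n$. The quantity $h_s$ appearing there becomes $|f(s,Y^{n-1}_s,0)|$, which by Remark \ref{rho} is dominated by $|f(s,0,0)|+a+b|Y^{n-1}_s|$. Hence
$$\hat{\mathbb{E}}\Big[\Big(\int_0^T |f(s,Y^{n-1}_s,0)|\,ds\Big)^\alpha\Big]\leq C\Big\{1+\|Y^{n-1}\|_{S_G^\alpha}^\alpha+\hat{\mathbb{E}}\Big[\int_0^T|f(s,0,0)|^\alpha ds\Big]\Big\}.$$
Plugging this into Proposition \ref{the1.6} and absorbing the cross-term $\|Y^n\|_{S_G^\alpha}^{\alpha/2}(\cdot)^{1/2}$ via Young's inequality $ab\leq\tfrac12(a^2+b^2)$ yields the first estimate. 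The $\|A^n_T\|_{L_G^\alpha}^\alpha$ half is immediate from the second line of Proposition \ref{the1.6}.

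For the second inequality I would apply Proposition \ref{the1.7} to the two equations satisfied by $Y^n$ and $Y^{n+m}$, whose generators $\tilde f^n$ and $\tilde f^{n+m}$ both satisfy (A1) and (A2). The conclusion bounds $\|Z^{n,m}\|_{H_G^\alpha}^\alpha$ by $\|Y^{n,m}\|_{S_G^\alpha}^\alpha$ plus $\|Y^{n,m}\|_{S_G^\alpha}^{\alpha/2}$ times a sum over $i\in\{n,n+m\}$ of $\|Y^i\|_{S_G^\alpha}^{\alpha/2}$ and $(\hat{\mathbb{E}}[(\int_0^T|f(s,Y^{i-1}_s,0)|\,ds)^\alpha])^{1/2}$. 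Handling each generator-at-zero integral exactly as above, and using $(a+b+c)^{1/2}\leq a^{1/2}+b^{1/2}+c^{1/2}$, produces the required contributions $\|Y^{n-1}\|_{S_G^\alpha}^{\alpha/2}$ and $\|Y^{n+m-1}\|_{S_G^\alpha}^{\alpha/2}$, together with the constant $1$ and the term $(\hat{\mathbb{E}}[\int_0^T|f(s,0,0)|^\alpha ds])^{1/2}$.

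There is no serious obstacle in this argument: the whole statement is a book-keeping exercise that lifts the Lipschitz-case a priori estimates through the iterated generators. The only genuine check is that the constant $C$ depends only on $\alpha,\beta,T,G,L,\rho$ and not on $n$ or $m$, which is automatic because the Lipschitz constant of every $\tilde f^n$ in $z$ is the universal constant $L$, and because $a,b$ from Remark \ref{rho} are determined by $\rho$ alone.
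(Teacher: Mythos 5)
Your argument is exactly the paper's: bound $|f(s,Y^{n-1}_s,0)|$ by $|f(s,0,0)|+a+b|Y^{n-1}_s|$ via Remark \ref{rho} and then feed the resulting integrability of the generator-at-zero into Propositions \ref{the1.6} and \ref{the1.7}. The only cosmetic imprecision is the claim $f(\cdot,Y^{n-1}_\cdot,0)\in M_G^\beta(0,T)$ — the iteration only yields $M_G^{\beta-\varepsilon}(0,T)$ for arbitrarily small $\varepsilon$ — but since every exponent needed is strictly below $\beta$, this does not affect the conclusion.
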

	
	\begin{proof}
	By a similar analysis as the proof of Lemma \ref{lem3.3}, there exists a constant $C$ independent of $n$, such that 
	$$|f(s,Y^{n-1}_s,0)|^\alpha\leq C(1+|f(s,0,0)|^\alpha+|Y^{n-1}_s|^\alpha).$$
	Then, applying Proposition \ref{the1.6} and \ref{the1.7}, we get the desired result.
	\end{proof}
	
	\begin{proof}[Proof of Theorem \ref{main}: the existence result]
	The proof will be divided into the following steps.
	
	\textbf{Step 1.} We claim that there exists a constant $C$ independent of $n$, such that for any $2\leq \alpha<\beta$
	$$\hat{\mathbb{E}}[\sup_{t\in[0,T]}|Y^n_t|^\alpha]\leq C.$$
	Set $\mathbb{I}=|\xi|^\alpha+\int_0^T( |b^I(s)|^\alpha+|\sigma^I(s)|^\alpha )ds+\sup_{t\in[0,T]}|I_t|^\alpha$. Clearly, we have $\hat{\mathbb{E}}[\mathbb{I}^{\frac{\beta}{\alpha}}]\leq C$. By Theorem \ref{the1.14}, it follows that 
	\begin{align*}
	\hat{\mathbb{E}}[\sup_{t\in[0,T]}|Y^n_t|^\alpha]\leq C\hat{\mathbb{E}}[\sup_{t\in[0,T]}\hat{\mathbb{E}}_t[\mathbb{I}+\int_t^T |f(s,Y^{n-1}_s,0)|^\alpha ds]].
	\end{align*} 
	Since $\alpha<\beta$, by Theorem \ref{the1.2}, it is sufficient to prove that there exists a constant $C$ independent of $n$, such that 
	$$\hat{\mathbb{E}}[|\mathbb{I}+\int_t^T |f(s,Y^{n-1}_s,0)|^\alpha ds|^{\frac{\beta}{\alpha}}]\leq C.$$ 
	Indeed, we have 
	\begin{align*}
	&\hat{\mathbb{E}}[|\mathbb{I}+\int_t^T |f(s,Y^{n-1}_s,0)|^\alpha ds|^{\frac{\beta}{\alpha}}]\\
	\leq &C\hat{\mathbb{E}}[1+\mathbb{I}^{\frac{\beta}{\alpha}}+\int_0^T |Y^{n-1}_s|^\beta ds+\int_0^T |f(s,0,0)|^\beta ds]\\
	\leq &C\hat{\mathbb{E}}[1+\mathbb{I}^{\frac{\beta}{\alpha}}+\int_0^T |f(s,0,0)|^\beta ds]+\int_0^T\hat{\mathbb{E}}[ |Y^{n-1}_s|^\beta] ds\leq C,
		\end{align*}
		where we have used Lemma \ref{lem3.3} in the last inequality. Hence, the claim holds true. Consequently, by Lemma \ref{lem3.5}, there exists a constant $C$ independent of $n$, such that 
		$$\hat{\mathbb{E}}[|A_T^n|^\alpha]+\hat{\mathbb{E}}[(\int_0^T|Z^n_t|^{\frac{\alpha}{2}}dt)^2]\leq C.$$
		
       \textbf{Step 2.} We show that $\{Y^n\}_{n\in\mathbb{N}}$, $\{Z^n\}_{n\in\mathbb{N}}$, $\{A^n\}_{n\in\mathbb{N}}$ are Cauchy sequences in $S_G^\alpha(0,T)$, $H^\alpha_G(0,T)$ and $S^\alpha_G(0,T)$, respectively. Set $Y^{n,m}_t=Y^{n+m}_t-Y^n_t$. Applying Proposition \ref{the1.10} implies that 
       $$\hat{\mathbb{E}}[\sup_{t\in[0,T]}|Y^{n,m}_t|^\alpha]\leq C\hat{\mathbb{E}}[\sup_{t\in[0,T]}\hat{\mathbb{E}}_t[\int_0^T |f(s,Y^{n+m-1}_s,Z^{n+m}_s)-f(s,Y^{n-1}_s,Z^{n+m}_s)|^\alpha ds]].$$
       By Lemma \ref{lem4}, the function $\widetilde{\rho}(x):=\rho^\beta(x^{\frac{1}{\beta}})$ is a continuous, nondecreasing and concave function. Apply the H\"{o}lder inequality and Lemma \ref{lem2.14}, we obtain that 
       \begin{align*}
       &\hat{\mathbb{E}}[|\int_0^T |f(s,Y^{n+m-1}_s,Z^{n+m}_s)-f(s,Y^{n-1}_s,Z^{n+m}_s)|^\alpha ds|^{\frac{\beta}{\alpha}}]\\
       \leq &C\hat{\mathbb{E}}[\int_0^T |f(s,Y^{n+m-1}_s,Z^{n+m}_s)-f(s,Y^{n-1}_s,Z^{n+m}_s)|^\beta ds]\\
       \leq &C\hat{\mathbb{E}}[\int_0^T \rho^\beta(|Y^{n,m}_s|)ds]= C\hat{\mathbb{E}}[\int_0^T\widetilde{\rho}(|Y^{n,m}_s|^\beta)ds]\\
       \leq &C\widetilde{\rho}(\sup_{s\in[0,T]}\hat{\mathbb{E}}[|Y^{n,m}_s|^\beta]).
       \end{align*} 
       By Theorem \ref{the1.2} and Lemma \ref{lem3.4}, all the above analysis indicates that 
       $$\lim_{n,m\rightarrow\infty}\hat{\mathbb{E}}[\sup_{t\in[0,T]}|Y^{n,m}_t|^\alpha]=0.$$
       Hence, there exists a process $Y\in S_G^\alpha(0,T)$, such that 
        $$\lim_{n\rightarrow\infty}\hat{\mathbb{E}}[\sup_{t\in[0,T]}|Y^n_t-Y_t|^\alpha]=0.$$
        It follows from Lemma \ref{lem3.5} that $\{Z^n\}_{n\in\mathbb{N}}$ is a Cauchy sequences in  $H^\alpha_G(0,T)$. Consequently, there exists a process $Z\in H_G^\alpha(0,T)$, such that 
         $$\lim_{n\rightarrow \infty}\hat{\mathbb{E}}[(\int_0^T |Z^n_t-Z_t|^2dt)^{\frac{\alpha}{2}}]=0.$$
         Set 
         $$A_t=Y_0-Y_t-\int_0^tf(s,Y_s,Z_s)ds+\int_0^t Z_s dB_s.$$
         Applying a similar analysis as the proof of Theorem 3.1 (Step 3.) in \cite{He}, we have 
         $$\lim_{n\rightarrow\infty}\|A^n-A\|_{S^\alpha_G}=0.$$
         
         \textbf{Step 3.} It remains to prove that $Y_t\geq S_t$, $t\in[0,T]$ and  $\{-\int_0^t (Y_s-S_s)dA_s\}_{t\in[0,T]}$  is a nonincreasing $G$-martingale. The first assertion is clear and the proof of the second assertion is similar as the proof for Theorem 5.1 in \cite{LPSH}.
	\end{proof}


\subsection{Construction via penalization}

In this section, we will construct the solutions for reflected $G$-BSDEs with non-Lipschitz coefficients by a penalization method.  For $n\in\mathbb{N}$,  consider the following family of $G$-BSDEs
	\begin{equation}\label{equ1}
	Y_t^n=\xi+\int_t^T f(s,Y_s^n,Z_s^n)ds-\int_t^T Z_s^ndB_s-(K_T^n-K_t^n)+n\int_t^T(Y_s^n-S_s)^-ds.
\end{equation}
	By Theorem \ref{thm3.1}, for any $n\in\mathbb{N}$ and $2\leq \alpha<\beta$, there exists a unique solution $(Y^n,Z^n,K^n)\in \mathfrak{S}^\alpha_G(0,T)$ to the above equation. 
	Now let $L_t^{n}=n\int_0^t (Y_s^n-S_s)^-ds$, $A^n_t=L^n_t-K^n_t$. Then $\{A_t^{n}\}_{t\in[0,T]}$ is a nondecreasing process. We can rewrite $G$-BSDE \eqref{equ1} as
	\begin{equation}\label{equ}
	Y_t^n=\xi+\int_t^T f(s,Y_s^n,Z_s^n)ds-\int_t^T Z_s^ndB_s+(A_T^{n}-A_t^{n}).
	\end{equation}
	The objective is to show that $(Y^n,Z^n,A^n)$ converges to $(Y,Z,A)$ and $(Y,Z,A)$ is the solution of reflected $G$-BSDE with parameters $(\xi,f,S)$. The proof will be divided into the following steps.

\textbf{Step 1.} We first show that $Y^n$, $Z^n$, $K^n$, $L^{n}$ are uniformly bounded under the norm $\|\cdot\|_{S_G^\alpha}$, $\|\cdot\|_{H_G^\alpha}$, $\|\cdot\|_{L_G^\alpha}$ and $\|\cdot\|_{L_G^\alpha}$, respectively.   	
\begin{lemma}\label{Esti-Y}
For $2\leq \alpha<\beta$, there exists a constant $C$ independent of $n$, such that  \begin{align*}
&\hat{\mathbb{E}}[\sup_{t\in[0,T]}|Y_t^n|^\alpha]\leq C, \ \hat{\mathbb{E}}[|K_T^n|^\alpha]\leq C, \\
& \hat{\mathbb{E}}[|L_T^{n}|^\alpha]\leq C, \  \mathbb{\hat{E}}[(\int_{0}^{T}|Z^n_{s}|^{2}ds)^{\frac{\alpha}{2}}]\leq C.
\end{align*}	
\end{lemma}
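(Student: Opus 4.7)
The crux of the lemma is a uniform (in $n$) bound on $\hat{\mathbb{E}}[\sup_t|Y^n_t|^\alpha]$; with that in hand, the estimates on $Z^n$, $A^n$ (and hence on $L^n, K^n$) follow from Proposition~\ref{the1.6'} applied to \eqref{equ}, since $A^n=L^n-K^n$ is nondecreasing and starts from $0$. For the lower bound, observe that $n(Y^n_s-S_s)^-\geq 0$, so the $G$-BSDE comparison theorem~\ref{thm3.7} for generators satisfying Mao's condition gives $Y^n_t\geq \tilde Y_t$, where $(\tilde Y,\tilde Z,\tilde K)$ solves the unpenalized $G$-BSDE with parameters $(\xi,f)$. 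Theorem~\ref{thm3.1} yields $\tilde Y\in S_G^\alpha(0,T)$ for every $2\leq\alpha<\beta$, so $\hat{\mathbb{E}}[\sup_t((Y^n_t)^-)^\alpha]\leq C$ uniformly in $n$.

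\textbf{Upper bound.} The strategy is to exploit the generalized $G$-It\^o process $I$ majorizing $S$ from (A3). Apply the $G$-It\^o formula (via a standard smooth approximation, e.g.\ $\phi_\delta(x)=((x^+)^2+\delta)^{\alpha/2}-\delta^{\alpha/2}$) to $((Y^n_t-I_t)^+)^\alpha$. The key observation is that on $\{Y^n_s>I_s\}\subseteq\{Y^n_s>S_s\}$ the penalty $n(Y^n_s-S_s)^-$ vanishes, hence drops out when multiplied against $((Y^n_s-I_s)^+)^{\alpha-1}$. Using the sublinear control $\rho(u)\leq a+bu$ from Remark~\ref{rho} to linearize the $y$-dependence of $f$, Young's inequality to absorb cross terms into $((Y^n-I)^+)^\alpha$ and the quadratic-variation term $((Y^n-I)^+)^{\alpha-2}|Z^n-\sigma^I|^2$, and Proposition~\ref{BDG} to handle the $dB$-integral, one obtains an inequality of the form
\begin{displaymath}
\hat{\mathbb{E}}\Big[\sup_t \bigl((Y^n_t-I_t)^+\bigr)^\alpha\Big] \leq C + \varepsilon\,\hat{\mathbb{E}}\Big[\sup_t \bigl((Y^n_t-I_t)^+\bigr)^\alpha\Big] + C_\varepsilon\,\hat{\mathbb{E}}[|K^n_T|^\alpha].
\end{displaymath}
The contribution of $K^I$ has the favorable sign (since $K^I$ is nonincreasing, $((Y^n-I)^+)^{\alpha-1}dK^I\leq 0$), whereas $-\int ((Y^n-I)^+)^{\alpha-1}dK^n$ is the sole wrong-sign term, producing the $|K^n_T|^\alpha$ contribution after a Young-type splitting of $\sup_s((Y^n_s-I_s)^+)^{\alpha-1}\cdot|K^n_T|$.

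\textbf{Closing the loop, and main obstacle.} Proposition~\ref{the1.6'} applied to \eqref{equ} yields $\hat{\mathbb{E}}[|A^n_T|^\alpha]\leq C(1+\hat{\mathbb{E}}[\sup_t|Y^n_t|^\alpha]+\hat{\mathbb{E}}[(\int_0^T h_s\,ds)^\alpha])$. Since $A^n=L^n-K^n$ decomposes as the sum of two nondecreasing processes starting from $0$ (with $L^n\geq 0$ and $-K^n\geq 0$), both $L^n_T$ and $|K^n_T|$ are dominated by $A^n_T$. Combining this with $|Y^n_t|\leq (Y^n_t-I_t)^++(Y^n_t)^-+|I_t|$ and the lower-bound estimate above, the self-consistent inequality in $\hat{\mathbb{E}}[\sup_t((Y^n-I)^+)^\alpha]$ closes upon a judicious choice of the Young parameters, producing the uniform bound on $\hat{\mathbb{E}}[\sup_t|Y^n_t|^\alpha]$. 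Proposition~\ref{the1.6'} then directly yields $\hat{\mathbb{E}}[(\int_0^T|Z^n_s|^2ds)^{\alpha/2}]+\hat{\mathbb{E}}[|A^n_T|^\alpha]\leq C$, and the sign structure of the decomposition transfers this to $L^n_T$ and $K^n_T$. The main obstacle is precisely the wrong-sign term $-\int((Y^n-I)^+)^{\alpha-1}dK^n$: the Young parameter must be tuned so that the feedback coefficient multiplying $\hat{\mathbb{E}}[\sup_t|Y^n_t|^\alpha]$ on the right-hand side stays strictly below one, which is the delicate technical heart of the lemma.
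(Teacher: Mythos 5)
Your overall architecture agrees with the paper's: compare $Y^n$ with the dominating process $I$ from (A3) so that the penalty term acquires a favourable sign, obtain a uniform bound on $\hat{\mathbb{E}}[\sup_t|Y^n_t|^\alpha]$, and then read off the bounds for $Z^n$ and $A^n=L^n-K^n$ (hence for $L^n_T$ and $|K^n_T|$, both dominated by $A^n_T$) from Proposition \ref{the1.6'}. The paper applies It\^o's formula to $|Y^n_t-I_t|^\alpha e^{rt}$ directly and uses $\bar Y^n_s(\bar Y^n_s-\bar S_s)^-\le 0$ (when the penalty is active one has $\bar Y^n_s<\bar S_s\le 0$), which treats both signs at once; your split into $((Y^n-I)^+)^\alpha$ plus a comparison-theorem lower bound via Theorem \ref{thm3.7} is a workable variant of the same idea.

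The genuine gap is your treatment of the term $-\int((Y^n_s-I_s)^+)^{\alpha-1}\,dK^n_s$. You propose to split it by Young's inequality into $\varepsilon\,\hat{\mathbb{E}}[\sup_t((Y^n_t-I_t)^+)^\alpha]+C_\varepsilon\,\hat{\mathbb{E}}[|K^n_T|^\alpha]$ and then close a loop using $\hat{\mathbb{E}}[|K^n_T|^\alpha]\le\hat{\mathbb{E}}[|A^n_T|^\alpha]\le C(1+\hat{\mathbb{E}}[\sup_t|Y^n_t|^\alpha]+\cdots)$ from Proposition \ref{the1.6'}. This loop cannot close: the constant $C$ there is a fixed constant depending on $\alpha,T,\rho,G$ (it is not small), while $C_\varepsilon\sim\varepsilon^{-(\alpha-1)}\to\infty$ as $\varepsilon\to 0$, so the total coefficient $\varepsilon+C_\varepsilon C$ multiplying $\hat{\mathbb{E}}[\sup_t((Y^n_t-I_t)^+)^\alpha]$ can never be forced below one. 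In fact this term should not be estimated at all: since $((Y^n-I)^+)^{\alpha-1}\ge 0$ and $K^n$ is a nonincreasing $G$-martingale, the process $\int_0^\cdot((Y^n_s-I_s)^+)^{\alpha-1}dK^n_s$ is itself a (nonincreasing) $G$-martingale; combined with the symmetric integral against $dB$ it yields a $G$-martingale $M$, and writing the It\^o identity in the form $((Y^n_t-I_t)^+)^\alpha e^{rt}+(M_T-M_t)\le \mathrm{RHS}$ gives $((Y^n_t-I_t)^+)^\alpha e^{rt}\le\hat{\mathbb{E}}_t[\mathrm{RHS}]$ with no $K^n$ contribution whatsoever. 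This is precisely how the paper proceeds (its martingale $M$ absorbs $(\bar Y^n_s)^+dK^n_s$ and $(\bar Y^n_s)^-dK^I_s$), and it is the step your argument is missing. A secondary remark: the paper obtains the $\sup_t$ estimate by first proving the pointwise conditional bound $|Y^n_t-I_t|^\alpha\le C\hat{\mathbb{E}}_t[\cdots]$ and then invoking Theorem \ref{the1.2}, rather than applying Proposition \ref{BDG} to the stochastic integral; either route is fine once the $K^n$ term is handled correctly.
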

	
\begin{proof} Recall that $I_t=I_0+\int_0^t b^I(s)ds+\int_0^t \sigma^I(s)dB_s+K^I_t$ is the generalized $G$-It\^o process such that $S_t\le I_t$, $t\in[0,T]$. 	Set
$\bar{Y}_t^n=Y_t^n-{I}_t,$  $\bar{Z}_t^n=Z_t^n-\sigma^I(t)$, $H_t=(\bar{Y}^n_t)^2$, $\bar{S}_t=S_t-I_t$, and $\bar{f}_t=f(t,Y^n_t, Z^n_t)+b^I(t)$. $G$-BSDE \eqref{equ1} can be rewritten as
\begin{align*}
\bar{Y}_t^n=&\xi-I_T+\int_t^T \bar{f}(s)ds+n\int_t^T(\bar{Y}_s^n-\bar{S}_s)^-ds -\int_t^T \bar{Z}_s^ndB_s\\
 &-(K_T^n-K_t^n) +(K^I_T-K^I_t).
\end{align*}
For any $r>0$, applying $G$-It\^{o}'s formula to $H_t^{\alpha/2}e^{rt}$, we get
\begin{displaymath}
\begin{split}
&H_t^{\alpha/2}e^{rt}+\int_t^T re^{rs}H_s^{\alpha/2}ds+\int_t^T \frac{\alpha}{2} e^{rs}H_s^{\alpha/2-1}(\bar{Z}_s^n)^2d\langle B\rangle_s\\
=&|\xi-{I}_T|^\alpha e^{rT}+\alpha(1-\frac{\alpha}{2})\int_t^Te^{rs}H_s^{\alpha/2-2}(\bar{Y}_s^n)^2(\bar{Z}_s^n)^2d\langle B\rangle_s+\int_t^T{\alpha} e^{rs}H_s^{\alpha/2-1}\bar{Y}_s^n\bar{f}_sds\\
&+\int_t^T\alpha e^{rs}H_s^{\alpha/2-1}n\bar{Y}_s^n(\bar{Y}_s^n-\bar{S}_s)^-ds-\int_t^T\alpha e^{rs}H_s^{\alpha/2-1}(\bar{Y}_s^n\bar{Z}_s^ndB_s+\bar{Y}_s^ndK_s^n-\bar{Y}^n_sdK^I_s).
\end{split}
\end{displaymath}
It is easy to check that  $\bar{Y}_s^n(\bar{Y}_s^n-\bar{S}_s)^-\le0$. Hence,	
\begin{displaymath}
\begin{split}
&\quad H_t^{\alpha/2}e^{rt}+\int_t^T re^{rs}H_s^{\alpha/2}ds+\int_t^T \frac{\alpha}{2} e^{rs}H_s^{\alpha/2-1}(\bar{Z}_s^n)^2d\langle B\rangle_s\\
&\leq|\xi-{I}_T|^\alpha e^{rT}+\alpha(1-\frac{\alpha}{2})\int_t^Te^{rs}H_s^{\alpha/2-2}(\bar{Y}_s^n)^2(\bar{Z}_s^n)^2d\langle B\rangle_s\\
&\quad+\int_t^T{\alpha} e^{rs}H_s^{\alpha/2-1/2}|\bar{f}_s|ds-(M_T-M_t),
\end{split}
\end{displaymath}	
where \[M_t=\int_0^t\alpha e^{rs}H_s^{\alpha/2-1}(\bar{Y}_s^n\bar{Z}_sdB_s+(\bar{Y}_s^n)^+dK_s^n+(\bar{Y}^n_s)^-dK^I_s)\] is a $G$-martingale.
Recalling Remark \ref{rho}, we have
\begin{align*}
&\int_t^T{\alpha} e^{rs}H_s^{\alpha/2-1/2}|\bar{f}_s|ds\\
\leq &\int_t^T{\alpha} e^{rs}H_s^{\alpha/2-1/2}\{|f(s,0,0)|+|b^I(s)|+\rho(|\bar{Y}_s^n|+|I_s|)+L[|\bar{Z}_s^n|+|\sigma^I(s)|]\}ds\\
\leq &(\alpha b+\frac{\alpha L^2}{\underline{\sigma}^2(\alpha-1)})\int_t^T e^{rs}H_s^{\alpha/2}ds+\frac{\alpha(\alpha-1)}{4}\int_t^Te^{rs}H_s^{\alpha/2-1}(\bar{Z}_s^n)^2d\langle B\rangle_s\\
&+\int_t^T \alpha e^{rs}H_s^{\alpha/2-1/2}[|f(s,0,0)|+|b^I(s)|+L|\sigma^I(s)|+a+b|I_s|] ds.
\end{align*}
By Young's inequality, we obtain
\begin{align*}
&\int_t^T \alpha e^{rs}H_s^{\alpha/2-1/2}[|f(s,0,0)|+|b^I(s)|+L|\sigma^I(s)|+a+b|I_s|] ds\\
\leq &\int_t^T e^{rs}[|f(s,0,0)|^\alpha+|b^I(s)|^\alpha+b^\alpha|{I}_s|^\alpha+L^\alpha|\sigma^I(s)|^\alpha+a^\alpha]ds\\
&+5(\alpha-1)\int_t^T e^{rs}H_s^{\alpha/2}ds .
\end{align*}
Combining the above inequalities, we get
\begin{align*}
&H_t^{\frac{\alpha}{2}}e^{rt}+\int_t^T (r-\tilde{\alpha})e^{rs}H_s^{\frac{\alpha}{2}}ds+\int_t^T \frac{\alpha(\alpha-1)}{4} e^{rs}H_s^{\frac{\alpha-2}{2}}(\bar{Z}_s^n)^2d\langle B\rangle_s+(M_T-M_t)\\
\leq &|\xi-{I}_T|^\alpha e^{rT}+\int_t^T e^{rs}[|f(s,0,0)|^\alpha+|b^I(s)|^\alpha+b^\alpha|{I}_s|^\alpha+L^\alpha|\sigma^I(s)|^\alpha+a^\alpha]ds,
\end{align*}
where $\tilde{\alpha}=5(\alpha-1)+\alpha b+\frac{\alpha L^2}{\underline{\sigma}^2(\alpha-1)}$. Setting $r=\tilde{\alpha}+1$ and taking conditional expectations on both sides, we derive that
\begin{displaymath}
H_t^{\alpha/2}e^{rt}\leq \hat{\mathbb{E}}_t[|\xi-{I}_T|^\alpha e^{rT}+\int_t^T e^{rs}[|f(s,0,0)|^\alpha+|b^I(s)|^\alpha+b^\alpha|{I}_s|^\alpha+L^\alpha|\sigma^I(s)|^\alpha+a^\alpha]ds].
\end{displaymath}
Then, there exists a constant $C$ independent of $n$ such that
\begin{displaymath}
|\bar{Y}_t^n|^\alpha\leq C\hat{\mathbb{E}}_t[|\xi-{I}_T|^\alpha+\int_t^T [1+|f(s,0,0)|^\alpha+|b^I(s)|^\alpha+|\sigma^I(s)|^\alpha+|{I}_s|^\alpha]ds].
\end{displaymath}
Noting that $|Y_t^n|^\alpha\leq C(|\bar{Y}_t^n|^\alpha+|I_t|^\alpha)$ and applying Theorem \ref{the1.2}, we  get that 
$$\hat{\mathbb{E}}[\sup_{t\in[0,T]}|Y^n_t|^\alpha]\leq C$$
 with $C$ independent of $n$. Since $0\leq -K^n_T\leq A^n_T$ and $0\leq L^n_T\leq A^n_T$, applying Proposition \ref{the1.6'} yields the uniform estimates for $Z^n$, $K^n$, $L^{n}$, respectively.
\end{proof}

\textbf{Step 2.} Now, we prove that $(Y^n-S)^-$ converges to $0$ uniformly.

\begin{lemma}\label{Conv-U-L}
Assume that (H1) and (A2)-(A4) hold.
For any $2\leq \alpha<\beta$, we have
\begin {eqnarray}\label {Conv.-U-L}
 \lim_{n\rightarrow \infty} \hat{\mathbb{E}}[\sup_{t\in[0,T]}|(Y_t^n-S_t)^-|^\alpha]=0.
\end {eqnarray}
\end{lemma}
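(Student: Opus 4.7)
The aim is to show $\hat{\mathbb{E}}[\sup_{t\in[0,T]}|(Y_t^n-S_t)^-|^\alpha]\to 0$. The strategy is to interpose an auxiliary reflected $G$-BSDE with frozen driver and the same obstacle $S$, and to show its first component is close to $Y^n$. Set $\varphi^n(s):=f(s,Y_s^n,Z_s^n)$; by Lemma \ref{Esti-Y} together with Lemma \ref{lem2.15}, $\varphi^n\in M_G^\alpha(0,T)$ with bounds uniform in $n$. Theorem \ref{the1.14} then provides a unique solution $(\bar Y^n,\bar Z^n,\bar A^n)\in\mathcal{S}_G^\alpha(0,T)$ of the reflected $G$-BSDE with data $(\xi,\varphi^n,S)$, so that $\bar Y^n\geq S$ quasi-surely and $\{-\int_0^t(\bar Y^n_s-S_s)\,d\bar A^n_s\}_{t\in[0,T]}$ is a nonincreasing $G$-martingale.

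Setting $D^n_t:=\bar Y^n_t-Y^n_t$ and subtracting the two equations gives
\[
D^n_t=(\bar A_T^n-\bar A_t^n)+(K_T^n-K_t^n)-(L_T^n-L_t^n)-\int_t^T(\bar Z^n_s-Z^n_s)\,dB_s,
\]
with $L_t^n:=n\int_0^t(Y_s^n-S_s)^-\,ds$. Apply $G$-It\^o's formula to $|D_t^n|^\alpha e^{rt}$ for a suitable $r>0$. The frozen-driver contributions cancel, leaving: (i) a stochastic integral against $dB$ whose conditional $\hat{\mathbb{E}}_t$ is zero; (ii) a contribution from the nonincreasing $G$-martingale $K^n$, handled via its own $G$-martingale property; (iii) a contribution from $\bar A^n$, which after the Skorohod-type martingale condition reduces, in $G$-expectation, to an integral against $(Y_s^n-S_s)^-$ on the support of $d\bar A^n$; (iv) a contribution from $L^n$, controlled by the uniform bound $\hat{\mathbb{E}}[|L_T^n|^\alpha]\leq C$ of Lemma \ref{Esti-Y} combined with the resulting $\hat{\mathbb{E}}[\int_0^T(Y_s^n-S_s)^-\,ds]\leq Cn^{-1}$. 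Putting these pieces together with the BDG inequality (Proposition \ref{BDG}) and Young's inequality yields $\hat{\mathbb{E}}[\sup_{t\in[0,T]}|D_t^n|^\alpha]\to 0$ as $n\to\infty$.

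To conclude, observe that $\bar Y^n\geq S$ implies $(Y_t^n-S_t)^-\leq|D_t^n|$ pointwise in $t$: on $\{Y_t^n<S_t\}$ we have $S_t-Y_t^n\leq\bar Y_t^n-Y_t^n=D_t^n$, and elsewhere $(Y_t^n-S_t)^-=0\leq|D_t^n|$. Combining with the preceding estimate then delivers \eqref{Conv.-U-L}.

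The delicate point is the $G$-It\^o expansion of $|D^n|^\alpha$ in the second paragraph. In contrast with the classical setting, $K^n$ is only a nonincreasing $G$-martingale (not absolutely continuous) and $\bar A^n$ satisfies only a martingale condition (not the Skorohod identity); correctly balancing the contributions of these two push processes against the uniform $L^n$-bound from Lemma \ref{Esti-Y} is where the argument is most delicate, and the bookkeeping follows that of the proof of Theorem 5.1 in \cite{LPSH}.
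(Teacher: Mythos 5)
Your overall strategy is genuinely different from the paper's, and it contains a gap that I do not see how to close. The paper never introduces an auxiliary reflected equation: it linearizes the driver in $z$ (writing $f(s,Y^n_s,Z^n_s)=f(s,Y^n_s,0)+a^{\varepsilon,n}_sZ^n_s+m^{\varepsilon,n}_s$ with $|a^{\varepsilon,n}|\leq L$, $|m^{\varepsilon,n}|\leq 2L\varepsilon$), performs a $G$-Girsanov transformation to absorb $a^{\varepsilon,n}_sZ^n_s$, and then invokes the explicit exponential-kernel bound of Lemma 4.3 in \cite{LS},
\begin{equation*}
(Y^n_t-S_t)^-\leq\Bigl|\tilde{\mathbb{E}}^{\varepsilon,n}_t\bigl[\textstyle\int_t^Te^{-n(s-t)}f^{\varepsilon,n}(s)\,ds+\int_t^Te^{-n(s-t)}\,dS_s\bigr]\Bigr|,
\end{equation*}
whose two terms are killed as $n\to\infty$ by Lemma 4.2 in \cite{LS} and by the uniform bounds of Lemma \ref{Esti-Y}. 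This is a direct quantitative estimate that never touches the reflecting process of a limiting equation.

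The gap in your argument is item (iii), the contribution of $\bar A^n$. Writing $2\int_t^TD^n_s\,d\bar A^n_s=2\int_t^T(\bar Y^n_s-S_s)\,d\bar A^n_s+2\int_t^T(S_s-Y^n_s)\,d\bar A^n_s$, the first piece equals $-2(N_T-N_t)$ where $N_\cdot=-\int_0^\cdot(\bar Y^n_s-S_s)\,d\bar A^n_s$ is a \emph{nonincreasing} $G$-martingale; under a sublinear expectation $\hat{\mathbb{E}}_t[N_T-N_t]=0$ does \emph{not} imply $\hat{\mathbb{E}}_t[-(N_T-N_t)]=0$, and since $-(N_T-N_t)\geq0$ this term enters your upper bound with the wrong sign and cannot simply be discarded. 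The second piece is bounded by $2\int_t^T(Y^n_s-S_s)^-\,d\bar A^n_s\leq 2\sup_s(Y^n_s-S_s)^-\cdot\bar A^n_T$, and since $d\bar A^n$ is not absolutely continuous with respect to $ds$, your rate $\hat{\mathbb{E}}[\int_0^T(Y^n_s-S_s)^-\,ds]\leq Cn^{-1}$ from Lemma \ref{Esti-Y} gives no control over it; making it small requires precisely the uniform convergence $(Y^n-S)^-\to0$ that the lemma is supposed to establish, so the argument is circular at this point. (This is also why the paper proves the present lemma \emph{before} the $n,m$-Cauchy estimate of Lemma \ref{Conv-Y-Z-A}, where the analogous cross terms $\Delta^{n,m}$ are handled only by feeding in the conclusion of this lemma.) A secondary, fixable issue: to invoke Theorem \ref{the1.14} for the data $(\xi,\varphi^n,S)$ you need $\varphi^n\in M_G^{\beta'}(0,T)$ for some $\beta'>2$ strictly larger than the target exponent $\alpha$, which requires an intermediate-exponent bookkeeping as in the Picard-iteration section rather than a bare appeal to Lemma \ref{Esti-Y}.
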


\begin {proof}
 For each given $\varepsilon>0$, we can choose a Lipschitz function $l(\cdot)$ such that $I_{[-\varepsilon,\varepsilon]}\leq l(x)\leq I_{[-2\varepsilon,2\varepsilon]}$. Thus we have
\begin{align*}
&f(s,Y_s^n,Z_s^n)-f(s,Y_s^n,0)\\=&(f(s,Y_s^n, Z_s^n)-f(s, Y_s^n,0))l(Z_s^n)+a^{\varepsilon,n}_sZ_s^n=:m_s^{\varepsilon,n}+a^{\varepsilon,n}_sZ_s^n,
\end{align*}
where $a^{\varepsilon,n}_s=(1-l(Z_s^n))(f(s,Y_s^n, Z_s^n)-f(s, Y_s^n,0))(Z_s^n)^{-1}\in M_G^2(0,T)$ with $|a^{\varepsilon,n}_s|\leq L$. It is easy to check that $|m_s^{\varepsilon,n}|\leq 2L\varepsilon$. Then we can get
\begin{displaymath}
f(s, Y_s^n, Z_s^n)=f(s, Y_s^n,0)+a^{\varepsilon,n}_s Z_s^n+m_s^{\varepsilon,n}.
\end{displaymath}
Now we consider the following $G$-BSDE:
\begin{displaymath}
Y^{\varepsilon,n}_t=\xi+\int_t^T a^{\varepsilon,n}_sZ^{\varepsilon,n}_sds-\int_t^T Z^{\varepsilon,n}_sdB_s-(K^{\varepsilon,n}_T-K^{\varepsilon,n}_t).
\end{displaymath}
For each $\xi\in L_G^p(\Omega_T)$ with $p>1$, define
\begin{displaymath}
\tilde{\mathbb{E}}^{\varepsilon,n}_t[\xi]:=Y^{\varepsilon,n}_t,
\end{displaymath} which is a time-consistent sublinear expectation.
Set $\tilde{B}^{\varepsilon,n}_t=B_t-\int_0^t a^{\varepsilon,n}_s ds$. By Theorem 5.2 in \cite{HJPS2}, $\{\tilde{B}^{\varepsilon,n}_t\}$ is a $G$-Brownian motion under $\tilde{\mathbb{E}}^{\varepsilon,n}[\cdot]$.

We rewrite $G$-BSDE \eqref{equ1} as the following
\begin{eqnarray} \label {BSDE-Gisanov}
Y_t^n=\xi+\int_t^T f^{\varepsilon,n}(s)ds+\int_t^Tn(Y_s^n-S_s)^-ds
 -\int_t^T Z_s^nd\tilde{B}^{\varepsilon,n}_s-(K_T^n-K_t^n),
\end{eqnarray}
where $f^{\varepsilon,n}(s)=f(s, Y_s^n,0)+m^{\varepsilon,n}_s$. Since $K^n$ is a martingale under $\tilde{\mathbb{E}}^{\varepsilon,n}[\cdot]$ by Theorem 5.1 in \cite {HJPS2},   it follows by Lemma 4.3 in \cite{LS}	that
\begin{align*}
(Y_t^n-S_t)^-\leq& \bigg|\tilde{\mathbb{E}}^{\varepsilon, n}_t[\int_t^Te^{-n(s-t)}f^{\varepsilon, n}(s)ds+\int_t^Te^{-n(s-t)}dS_s]\bigg|.
\end{align*}
By Proposition 3.7 in \cite{HJPS1} (i.e., a priori estimates for $G$-BSDEs), for $2\leq \alpha<\beta$, it follows that
		\begin{equation}\begin{split} \label {Conv.-U-Proof}
		&\hat{\mathbb{E}}[\sup_{t\in[0,T]}|(Y^n_t-S_t)^-|^\alpha]\\
\leq &\hat{\mathbb{E}}\bigg[\sup_{t\in[0,T]}\bigg|\tilde{\mathbb{E}}^{\varepsilon, n}_t[\int_t^Te^{-n(s-t)}f^{\varepsilon, n}(s)ds+\int_t^Te^{-n(s-t)}dS_s]\bigg|^{\alpha}\bigg]\\
		\leq & C_{\alpha}\hat{\mathbb{E}}\bigg[\sup_{t\in[0,T]}\hat{\mathbb{E}}_t[\bigg|\int_t^Te^{-n(s-t)}f^{\varepsilon, n}(s)ds+\int_t^Te^{-n(s-t)}dS_s\bigg|^{\alpha}]\bigg].
		\end{split}\end{equation}
Set 
$$I=\hat{\mathbb{E}}\bigg[\sup_{t\in[0,T]}\hat{\mathbb{E}}_t[\bigg|\int_t^Te^{-n(s-t)}dS_s\bigg|^{\alpha}]\bigg],$$
which converges to $0$ as $n$ goes to $\infty$ by Lemma 4.2 in \cite{LS}. Recalling Remark \ref{rho} and $|m_s^{\varepsilon,n}|\leq 2L\varepsilon$, we obtain that 
\begin{align*}
\int_t^Te^{-n(s-t)}f^{\varepsilon, n}(s)ds\leq &(\int_t^T e^{-2n(s-t)}ds)^{1/2}(\int_t^T |f^{\varepsilon,n}(s)|^2 ds)^{1/2}\\
\leq &\frac{C}{\sqrt{n}}(\int_0^T (1+|f(s,0,0)|^2+|Y_s^n|^2)ds)^{1/2}.
\end{align*}
Set
$$II=\hat{\mathbb{E}}\bigg[\sup_{t\in[0,T]}\hat{\mathbb{E}}_t[\bigg|\int_t^Te^{-n(s-t)}f^{\varepsilon, n}(s)ds\bigg|^\alpha]\bigg],$$
which converges to $0$ as $n$ goes to $\infty$ by Lemma \ref{Esti-Y}.
 The proof is complete. 
\end{proof}

\textbf{Step 3.} Now we prove the convergence of $Y^n$, $Z^n$, $A^n$.
	\begin{lemma}\label{Conv-Y-Z-A}
		For  $2\leq \alpha<\beta$, we have
		\begin{align*}&\lim_{n,m\rightarrow\infty}\hat{\mathbb{E}}[\sup_{t\in[0,T]}|Y_t^n-Y_t^m|^\alpha]=0,\\ &\lim_{n,m\rightarrow\infty}\hat{\mathbb{E}}[(\int_0^T|Z_s^n-Z_s^m|^2ds)^{\frac{\alpha}{2}}]=0, \\
		 &\lim_{n,m\rightarrow\infty}\hat{\mathbb{E}}[\sup_{t\in[0,T]}|A_t^n-A_t^m|^\alpha]=0.\end{align*}
	\end{lemma}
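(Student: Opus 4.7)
The plan is to adapt the It\^{o}-based argument from the uniqueness proof of Theorem~\ref{main} to the two penalized triples $(Y^n, Z^n, A^n)$ and $(Y^m, Z^m, A^m)$, combined with the penalization-convergence technique of \cite{LPSH}. The additional difficulty relative to uniqueness is the cross term $\int_t^T (Y^n_s - Y^m_s)\, d(A^n_s - A^m_s)$, which no longer vanishes. It will be controlled through the decomposition $A^n = L^n - K^n$ together with Lemma~\ref{Conv-U-L}.

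Applying $G$-It\^{o}'s formula to $e^{rs}|Y^n_t - Y^m_t|^\alpha$ with $r > 0$ chosen to absorb the $|Z^n - Z^m|$-terms (exactly as in the derivation of \eqref{0}--\eqref{2}), we arrive at an inequality of the form
\[
|Y^n_t - Y^m_t|^\alpha e^{rt} + (M_T - M_t) \le C\int_t^T e^{rs}\rho^\alpha(|Y^n_s - Y^m_s|)\, ds + \alpha\int_t^T e^{rs}|Y^n_s - Y^m_s|^{\alpha-2}(Y^n_s - Y^m_s)\, d(A^n_s - A^m_s),
\]
where $M$ is a $G$-martingale absorbing both the $dB$-integral and the $(Y^n - Y^m)^\pm\, dK^{n,m}$-terms; its $G$-martingale property is justified by (the proof of) Proposition~3.4 of \cite{LPSH}. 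For the $L$-contribution to $d(A^n - A^m)$, the identities $(Y^n - S)\, dL^n = -n|(Y^n - S)^-|^2\, ds \le 0$ and $-(Y^m - S)\, dL^n \le (Y^m - S)^-\, dL^n$ (and the symmetric versions) reduce the remaining mass to expressions of the form $\int_0^T |Y^n - Y^m|^{\alpha-1}(Y^n - S)^-\, dL^m$, whose $\widehat{\mathbb{E}}$-norm tends to $0$ by H\"older's inequality combined with the uniform $L^\alpha$-bounds of Lemma~\ref{Esti-Y} and Lemma~\ref{Conv-U-L}.

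Taking conditional expectation and writing $\tilde{\rho}(x) := \rho^\alpha(x^{1/\alpha})$, this yields
\[
|Y^n_t - Y^m_t|^\alpha \le C\,\widehat{\mathbb{E}}_t\!\left[\int_t^T \tilde{\rho}(|Y^n_s - Y^m_s|^\alpha)\, ds\right] + \widehat{\mathbb{E}}_t[\varepsilon_{n,m}],
\]
with $\widehat{\mathbb{E}}[\varepsilon_{n,m}] \to 0$. Setting $u_{n,m}(t) := \widehat{\mathbb{E}}[|Y^n_t - Y^m_t|^\alpha]$, applying Jensen's inequality (Lemma~\ref{lem2.14}) and the concavity of $\tilde{\rho}$ (Lemma~\ref{lem4}) gives $u_{n,m}(t) \le \delta_{n,m} + C\int_t^T \tilde{\rho}(u_{n,m}(s))\, ds$ with $\delta_{n,m} \to 0$. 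A quantitative backward Bihari argument --- comparing $u_{n,m}$ with the solution of the ODE $v'(t) = -C\tilde{\rho}(v)$, $v(T) = \delta_{n,m}$, and exploiting $\int_{0+} du/\tilde{\rho}(u) = +\infty$ to ensure continuous dependence on the terminal value at $0$ --- then yields $\sup_{t \in [0,T]} u_{n,m}(t) \to 0$.

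To upgrade to $\widehat{\mathbb{E}}[\sup_t |Y^n_t - Y^m_t|^\alpha] \to 0$, apply Theorem~\ref{the1.2} to the conditional-expectation bound above, using a uniform $L^\beta$-bound for $Y^n$ obtained by refining the proof of Lemma~\ref{Esti-Y}. The $Z^n - Z^m$ Cauchy property in $H_G^\alpha(0,T)$ follows from Proposition~\ref{the1.7} (applicable to the $A^n$-processes by the remark following it), and the $A^n - A^m$ Cauchy property in $S_G^\alpha(0,T)$ follows from $A^n_t - A^m_t = (Y^m_t - Y^n_t) - (Y^m_0 - Y^n_0) + \int_0^t (Z^n_s - Z^m_s)\, dB_s - \int_0^t [f(s,Y^n_s,Z^n_s) - f(s,Y^m_s,Z^m_s)]\, ds$, combined with (H1) and Proposition~\ref{BDG}. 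The main obstacle is the bookkeeping needed to identify $M_t$ and to verify that the residual $\varepsilon_{n,m}$ really tends to $0$; this requires synchronizing the decomposition of $A^n - A^m$ with the $G$-martingale property from \cite{LPSH} and combining all of the preliminary estimates tightly.
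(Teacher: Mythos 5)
Your proposal follows essentially the same route as the paper: It\^{o}'s formula applied to $e^{rs}|Y^n_s-Y^m_s|^\alpha$, the decomposition $A^n=L^n-K^n$ with the $K$-terms absorbed into a $G$-martingale and the $L$-cross-terms dominated by $\int (Y^n-S)^-\,dL^m+(Y^m-S)^-\,dL^n$ (which vanish by H\"older, Lemma~\ref{Esti-Y} and Lemma~\ref{Conv-U-L}), then Jensen plus the concavity of $\tilde\rho$ and a backward Bihari argument, Theorem~\ref{the1.2} for the supremum, and the equation itself for $A^n-A^m$. The only caveats are cosmetic: the paper runs Bihari on $v(t)=\limsup_{n,m}u_{n,m}(t)$ via dominated convergence rather than your quantitative ODE-comparison, and your appeal to Proposition~\ref{the1.7} for the $Z$-estimate is not literally applicable since $f$ satisfies (H1) rather than (A1) --- one must redo that computation with $\rho(|\hat Y|)\le a+b|\hat Y|$, exactly as the paper does by mimicking the uniqueness proof (cf.\ Proposition~\ref{the1.6'}).
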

	
	\begin{proof}
		For any $r>0$,  and $n, m\in\mathbb{N}$, set
$$
\begin{array}{lll}{\hat{Y}_t=Y_t^n-Y_t^m,} & {\hat{Z}_t=Z_t^n-Z_t^m,} & {\hat{f}_t=f(t,Y_t^n,Z_t^n)-f(t,Y_t^m,Z_t^m),}\\ {\hat{L}_t=L_t^{n}-L_t^{m},}  & {\hat{K}_t=K_t^n-K_t^m.}  \end{array}
$$		
Denote $H_t=|\hat{Y}_t|^2$. Applying It\^{o}'s formula to $H_t^{\alpha/2}e^{rt}$, we  get
		\begin{displaymath}
		\begin{split}
		&\quad H_t^{\alpha/2}e^{rt}+\int_t^T re^{rs}H_s^{\alpha/2}ds+\int_t^T \frac{\alpha}{2} e^{rs}
		H_s^{\alpha/2-1}(\hat{Z}_s)^2d\langle B\rangle_s\\
		&=
		\alpha(1-\frac{\alpha}{2})\int_t^Te^{rs}H_s^{\alpha/2-2}(\hat{Y}_s)^2(\hat{Z}_s)^2d\langle B\rangle_s
		+\int_t^T\alpha e^{rs}H_s^{\alpha/2-1}\hat{Y}_sd\hat{L}_s\\
		&\quad+\int_t^T{\alpha} e^{rs}H_s^{\alpha/2-1}\hat{Y}_s\hat{f}_sds-\int_t^T\alpha e^{rs}H_s^{\alpha/2-1}(\hat{Y}_s\hat{Z}_sdB_s+\hat{Y}_sd\hat{K}_s).
		\end{split}
		\end{displaymath}
Recalling that  $L_t^{n}=n\int_0^t (Y_s^n-S_s)^-ds$, we have
\begin{align*}
 &\int_t^T\alpha e^{rs}H_s^{\alpha/2-1}\hat{Y}_sd(\hat{L}_s)\\
=&\int_t^T\alpha e^{rs}H_s^{\alpha/2-1}\bigg[(Y^n_s-S_s)-(Y^m_s-S_s)\bigg](dL^{n}_s-dL^{m}_s)\\
\le& \int_t^T\alpha e^{rs}H_s^{\alpha/2-1}\bigg[(Y^n_s-S_s)^-dL^{m}_s+(Y^m_s-S_s)^{-}dL^{n}_s\bigg]
=:\int_t^T\Delta^{n,m}_sds.
\end{align*}
Therefore,
\begin{displaymath}
		\begin{split}
		&\quad H_t^{\alpha/2}e^{rt}+\int_t^T re^{rs}H_s^{\alpha/2}ds+\int_t^T \frac{\alpha}{2} e^{rs}
		H_s^{\alpha/2-1}(\hat{Z}_s)^2d\langle B\rangle_s\\
		&\le\alpha(1-\frac{\alpha}{2})\int_t^Te^{rs}H_s^{\alpha/2-2}(\hat{Y}_s)^2(\hat{Z}_s)^2d\langle B\rangle_s+\int_t^T{\alpha} e^{rs}H_s^{\alpha/2-1}\hat{Y}_s\hat{f}_sds\\
		&\quad+\int_t^T\Delta^{n,m}_sds-(M_T-M_t),
		\end{split}
		\end{displaymath}
		where $M_t=\int_0^t \alpha e^{rs}H_s^{\alpha/2-1}(\hat{Y}_s\hat{Z}_sdB_s+(\hat{Y}_s)^+dK_s^m+(\hat{Y}_s)^-dK_s^n)$ is a $G$-martingale. By the assumption on $f$, applying the H\"{o}lder inequality and the Young inequality, we have
		\begin{align*}
		\int_t^T{\alpha} e^{rs}H_s^{\frac{\alpha-1}{2}}|\hat{f}_s|ds
		\leq &(\alpha -1+\frac{\alpha L^2}{\underline{\sigma}^2(\alpha-1)})\int_t^T e^{rs}H_s^{\alpha/2}ds+\int_t^T e^{rs} \rho^\alpha(|\hat{Y}_s|)ds\\
&+\frac{\alpha(\alpha-1)}{4}\int_t^Te^{rs}H_s^{\alpha/2-1}(\hat{Z}_s)^2d\langle B\rangle_s.
		\end{align*}
		Letting $r=\alpha+\frac{\alpha L^2}{\underline{\sigma}^2(\alpha-1)}$, we have
		\begin{align*}
		H_t^{\alpha/2}e^{rt}+(M_T-M_t)\leq	\int_t^T\Delta^{n,m}_sds+\int_t^T e^{rs} \rho^\alpha(|\hat{Y}_s|)ds.		
		\end{align*}
		Taking conditional expectation on both sides of the above inequality, it follows that
		\begin{equation}\label{eq1.5}
		H_t^{\alpha/2}e^{rt}\leq
		\hat{\mathbb{E}}_t[\int_t^T \Delta^{n,m}_sds+\int_t^T e^{rs} \rho^\alpha(|\hat{Y}_s|)ds].
		\end{equation}
		Taking expectation on both sides yields that 
		\begin{equation}\begin{split}\label{eq8}
		\hat{\mathbb{E}}[|\hat{Y}_t|^\alpha]\leq &
		C\{\hat{\mathbb{E}}[\int_0^T \Delta^{n,m}_sds]+\hat{\mathbb{E}}[\int_t^T \widetilde{\rho}(|\hat{Y}_s|^\alpha)ds]\}\\
		\leq &C\hat{\mathbb{E}}[\int_0^T \Delta^{n,m}_sds]+C\int_t^T \widetilde{\rho}(\hat{\mathbb{E}}[|\hat{Y}_s|^\alpha])ds,
		\end{split}\end{equation}
		where $\widetilde{\rho}(x):=\rho^\alpha(x^{1/\alpha})$ is a concave function by Lemma \ref{lem4} and we have used Lemma \ref{lem2.14} in the last inequality. By a similar analysis as the proof of Lemma 4.7 in \cite{LP}, for $1<\gamma<\beta/\alpha$, we have 
		\begin{equation}\label{Delta}
		\lim_{n,m\rightarrow\infty}\hat{\mathbb{E}}[(\int_0^T \Delta^{n,m}_s ds)^\gamma]=0.
		\end{equation}
		Set $u_{n,m}(t)=\sup_{r\in[t,T]}\hat{\mathbb{E}}[|\hat{Y}_r|^\alpha]$. It is easy to check that $u_{n,m}$ is uniformly bounded by Lemma \ref{Esti-Y}. It follows from \eqref{eq8} that 
		\begin{align*}
		u_{n,m}(t)\leq C\hat{\mathbb{E}}[\int_0^T \Delta^{n,m}_sds]+C\int_t^T \widetilde{\rho}(u_{n,m}(s))ds.
		\end{align*}
		Setting $v(t)=\limsup_{n,m\rightarrow\infty}u_{n,m}(t)$ and applying the Lebesgue dominated convergence theorem yield
		\begin{align*}
		v(t)\leq C\int_t^T \widetilde{\rho}(v(s))ds.
		\end{align*}
		Therefore, by Lemma \ref{lem2.16}, we obtain that 
		\begin{align}\label{eqsupY}
		\lim_{n,m\rightarrow\infty}\sup_{t\in[0,T]}\hat{\mathbb{E}}[|Y_t^n-Y^m_t|^\alpha]=0.
		\end{align}
		Recalling \eqref{eq1.5}, it is easy to check that 
		\begin{align*}
		\hat{\mathbb{E}}[\sup_{t\in[0,T]}|\hat{Y}_t|^\alpha]\leq C\hat{\mathbb{E}}[\sup_{t\in[0,T]}\hat{\mathbb{E}}_t[\int_0^T \Delta^{n,m}_sds+\int_0^T \widetilde{\rho}(|\hat{Y}_s|^\alpha) ]].
		\end{align*}
		By \eqref{Delta} and \eqref{eqsupY}, we finally obtain that for any $2\leq \alpha<\beta$
		\begin{align*}
		\lim_{n,m\rightarrow\infty}\hat{\mathbb{E}}[\sup_{t\in[0,T]}|Y_t^n-Y^m_t|^\alpha]=0.
		\end{align*}
		
		 By a similar analysis as in the proof of Theorem \ref{main}: the uniqueness result, for any $2\leq \alpha<\beta$, we get that
		\begin{align*}
		&\hat{\mathbb{E}}[(\int_0^T |\hat{Z}_t|^2dt)^{\frac{\alpha}{2}}]\leq C\{\hat{\mathbb{E}}[\sup_{t\in[0,T]}|\hat{Y}_t|^\alpha]+(\hat{\mathbb{E}}[\sup_{t\in[0,T]}|\hat{Y}_t|^\alpha])^{1/2}\},\\
		&\hat{\mathbb{E}}[\sup_{t\in[0,T]}|\hat{A}_t|^\alpha]\leq C\{\hat{\mathbb{E}}[\sup_{t\in[0,T]}|\hat{Y}_t|^\alpha]+\tilde{\rho}(\sup_{t\in[0,T]}\hat{\mathbb{E}}[|\hat{Y}_t|^\alpha])+\hat{\mathbb{E}}[(\int_0^T |\hat{Z}_t|^2dt)^{\frac{\alpha}{2}}]\}.
		\end{align*}
The proof is complete. 
\end{proof}

\textbf{Step 4.} It remains to prove $(Y^n,Z^n,A^n)$ converges to the solution of reflected $G$-BSDE.

\begin{proof}[Proof of Theorem \ref{main}: the existence result]
By Lemma \ref{Conv-Y-Z-A}, there exist $(Y,Z,A)\in\mathcal{S}^\alpha_G(0,T)$, such that
\begin{align*}
\lim_{n\rightarrow\infty}\hat{\mathbb{E}}[\sup_{t\in[0,T]}|Y_t^n-Y_t|^\alpha]=0,\ \lim_{n\rightarrow\infty}\hat{\mathbb{E}}[(\int_0^T|Z_s^n-Z_s|^2ds)^{\frac{\alpha}{2}}]=0, \
		 \lim_{n,\rightarrow\infty}\hat{\mathbb{E}}[\sup_{t\in[0,T]}|A_t^n-A_t|^\alpha]=0.
\end{align*}
It follows from Lemma \ref{Conv-U-L} that $S_t\leq Y_t$, $t\in[0,T]$. The proof for the fact that $\{-\int_0^t (Y_s-S_s)dA_s\}_{t\in[0,T]}$ is a nonincreasing $G$-martingale is the same as the proof of Theorem 5.1 in \cite{LPSH}. Therefore, $(Y,Z,A)$ is the solution to the reflected $G$-BSDE.
\end{proof}

As a byproduct of the penalization method, we have the following comparison theorem for reflected $G$-BSDEs, which extends the result Theorem 5.3 in \cite{LPSH}.

\begin{theorem}\label{comparison}
Let $(\xi^i,f^i,g^i,S^i)$ be two sets of parameters which satisfy (H1), (A2)-(A4), $i=1,2$. We additionally assume that 
\begin{itemize}
\item[(i)] $\xi^1\leq \xi^2$;
\item[(ii)] $f^1(t,y,z)\leq f^2(t,y,z)$, $g^1(t,y,z)\leq g^2(t,y,z)$, for any $(t,y,z)\in [0,T]\times\mathbb{R}\times\mathbb{R}$;
\item[(iii)] $S^1_t\leq S^2_t$, for any $t\in[0,T]$.
\end{itemize}
Let $(Y^i,Z^i,A^i)$ be the solutions of the reflected $G$-BSDE with parameters $(\xi^i,f^i,g^i,S^i)$, $i=1,2$, respectively. Then, we have $Y^1_t\leq Y^2_t$, $t\in[0,T]$.
\end{theorem}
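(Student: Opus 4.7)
My strategy is to combine the penalization construction from Section~5 with the comparison theorem for $G$-BSDEs under Mao's condition (Theorem~\ref{thm3.7}): the reflection is temporarily absorbed into the driver and then released in the limit. For each $i=1,2$ and $n\in\mathbb{N}$, I would introduce the penalized $G$-BSDE
\begin{displaymath}
Y_t^{i,n}=\xi^i+\int_t^T f^i(s,Y_s^{i,n},Z_s^{i,n})\,ds+\int_t^T g^i(s,Y_s^{i,n},Z_s^{i,n})\,d\langle B\rangle_s+n\int_t^T(Y_s^{i,n}-S_s^i)^-\,ds-\int_t^T Z_s^{i,n}\,dB_s-(K_T^{i,n}-K_t^{i,n}).
\end{displaymath}
The penalty $y\mapsto n(y-S_s^i)^-$ is $n$-Lipschitz in $y$ and independent of $z$, so the composite driver $\tilde f^{i,n}(s,y,z):=f^i(s,y,z)+n(y-S_s^i)^-$, together with $g^i$, still satisfies (H1') and (A2) via Remark~\ref{rho}. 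Theorem~\ref{thm3.1} then supplies a unique $(Y^{i,n},Z^{i,n},K^{i,n})\in\mathfrak{S}_G^\alpha(0,T)$ for every $n$.

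Next I would compare the penalized equations. By hypothesis $\xi^1\leq\xi^2$, $f^1\leq f^2$, and $g^1\leq g^2$; since $S^1\leq S^2$ we also have $(y-S_s^1)^-\leq(y-S_s^2)^-$ pointwise, and therefore $\tilde f^{1,n}\leq \tilde f^{2,n}$. Theorem~\ref{thm3.7} then yields $Y_t^{1,n}\leq Y_t^{2,n}$ quasi-surely for all $n$ and $t\in[0,T]$. To close the argument I would invoke Lemma~\ref{Conv-Y-Z-A} separately for each data set to get $\hat{\mathbb{E}}[\sup_{t\in[0,T]}|Y^{i,n}_t-Y^i_t|^\alpha]\to 0$ as $n\to\infty$. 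Subadditivity of $\hat{\mathbb{E}}$ then yields
\begin{displaymath}
\hat{\mathbb{E}}[\sup_{t\in[0,T]}(Y^1_t-Y^2_t)^+]\leq \sum_{i=1}^2\hat{\mathbb{E}}[\sup_{t\in[0,T]}|Y^{i,n}_t-Y^i_t|]\longrightarrow 0,
\end{displaymath}
so $(Y^1_t-Y^2_t)^+=0$ quasi-surely for every $t$, as required.

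The main obstacle I expect is the verification in the first step: the composite modulus for $\tilde f^{i,n}$ is of the form $\mu(u)+n^\beta u$, and one must confirm that it still satisfies Mao's Osgood-type integrability so that both Theorem~\ref{thm3.1} and Theorem~\ref{thm3.7} really apply. This should reduce to the standard concavity bound $\mu(u)\geq \mu(1)u$ on $(0,1]$, which controls $\int_{0+}du/(\mu(u)+n^\beta u)$ by a constant multiple of $\int_{0+}du/\mu(u)=+\infty$. Once that technical point is handled, the rest is a routine passage to the limit and requires no new a priori estimates beyond those already established in Section~5.
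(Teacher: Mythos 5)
Your proposal is correct and follows exactly the route the paper intends: the paper's own proof is a one-line sketch ("use the comparison theorem for $G$-BSDEs, Theorem~\ref{thm3.7}, and the penalization construction"), and you have simply filled in the details — comparing the penalized drivers via $(y-S^1_s)^-\leq(y-S^2_s)^-$, checking that $f^i(s,y,z)+n(y-S^i_s)^-$ still satisfies (H1') by the concavity bound $\mu(u)\geq\mu(1)u$ near $0$, and passing to the limit with Lemma~\ref{Conv-Y-Z-A}. The only point worth adding is that the penalization limit coincides with the given solution $(Y^i,Z^i,A^i)$ by the uniqueness result of Section~3, which justifies identifying the two.
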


\begin{proof}
Using the comparison theorem for $G$-BSDEs obtained in \cite{He} (see Theorem 3.7) and the construction for reflected $G$-BSDEs via penalization, we can easily obtain the desired result. So we omit it.
\end{proof}



	



\end{document}